\let\frak\mathfrak
\def\>{\relax\ifmmode\mskip.666667\thinmuskip\relax\else\kern.111111em\fi}
\def\<{\relax\ifmmode\mskip-.333333\thinmuskip\relax\else\kern-.0555556em\fi}
\def\vsk#1>{\vskip#1\baselineskip}
\def\vv#1>{\vadjust{\vsk#1>}\ignorespaces}
\def\vvn#1>{\vadjust{\nobreak\vsk#1>\nobreak}\ignorespaces}
 \let\alb\allowbreak
\def\fratop{\genfrac{}{}{0pt}1}
\def\satop#1#2{\fratop{\scriptstyle#1}{\scriptstyle#2}}
  \let\ssize\scriptstyle
\let\sssize\scriptscriptstyle
\let\Medskip\medskip
\def\medskip{\par\Medskip}
\let\Bigskip\bigskip
\def\bigskip{\par\Bigskip}
\let\Maketitle\maketitle
\def\maketitle{\Maketitle\thispagestyle{empty}\let\maketitle\empty}
\newtheorem{thm}{Theorem}[section]
\newtheorem{cor}[thm]{Corollary}
\newtheorem{lem}[thm]{Lemma}
\newtheorem{prop}[thm]{Proposition}
\numberwithin{equation}{section}
\theoremstyle{definition}
\newtheorem*{example}{Example}
\let\mc\mathcal
\let\nc\newcommand
\let\dl\delta
\let\Dl\Delta
\let\eps\varepsilon
\let\la\lambda
\let\pho\phi
\let\phi\varphi
\let\si\sigma
\let\Ups\Upsilon
\let\der\partial
\let\Hat\widehat
\let\ox\otimes
\let\Tilde\widetilde
\let\bra\langle
\let\ket\rangle
\let\ge\geqslant
\let\geq\geqslant
\let\le\leqslant
\let\leq\leqslant
\let\on\operatorname
\let\bi\bibitem
\let\bs\boldsymbol
\def\C{{\mathbb C}}
\def\Z{{\mathbb Z}}
\def\F{{\mc F}}
\def\+#1{^{\{#1\}}}
\def\lsym#1{#1\alb\dots\relax#1\alb}
\def\lc{\lsym,}
\def\End{\on{End}}
\def\Hom{\on{Hom}}
\def\ii{i,\<\>i}
\def\ij{i,\<\>j}
\def\ik{i,\<\>k}
\def\il{i,\<\>l}
\def\ji{j,\<\>i}
\def\jj{j,\<\>j}
\def\jk{j,\<\>k}
\def\kj{k,\<\>j}
\def\kl{k,\<\>l}
\def\ioi{i+1,\<\>i}
\def\ppo{p,\<\>p+1}
\def\pop{p+1,\<\>p}
\def\pci{p,\<\>i}
\def\pcj{p,\<\>j}
\def\poi{p+1,\<\>i}
\def\poj{p+1,\<\>j}
\def\gln{\mathfrak{gl}_N}
\def\Ugln{U(\gln)}
\def\Yn{Y\<(\gln)}
\def\beq{\begin{equation}}
\def\eeq{\end{equation}}
\def\be{\begin{equation*}}
\def\ee{\end{equation*}}
\nc{\bea}{\begin{eqnarray*}}
\nc{\eea}{\end{eqnarray*}}
\nc{\bean}{\begin{eqnarray}}
\nc{\eean}{\end{eqnarray}}
\nc{\Ref}[1]{{\rm(\ref{#1})}}
\def\n{{\mathfrak n}}
\let\ga\gamma
\let\Ga\Gamma
\nc{\Il}{{\mc I_{\bs\la}}}
\nc{\bla}{{\bs\la}}
\nc{\Fla}{\F_\bla}
\nc{\tfl}{{T^*\Fla}}
\nc{\GL}{{GL_n(\C)}}
\nc{\GLC}{{GL_n(\C)\times\C^*}}
\let\aal\al 
\def\zzz{z_1\lc z_n}
\def\Czh{\C[\zzz,h]}
\def\ty{\Tilde Y\<(\gln)}
\let\sd s 
\def\ib{\bs i}
\def\jb{\bs j}
\def\iib{\ib,\<\>\ib}
\def\ijb{\ib,\<\>\jb}
\def\jib{\jb,\<\>\ib}
\def\zb{\bs z}
\def\ip{\<\>i\>\prime}
\def\ipi{\>\prime\<\>i}
\def\Xin{X^\infty}
\def\Xk{X^\kk}
\def\ddk_#1{\kk_{#1}\<\>\frac\der{\der\<\>\kk_{#1}}}
\def\bul{\mathbin{\raise.2ex\hbox{$\sssize\bullet$}}}
\def\intt{\mathchoice
{\mathop{\raise.2ex\rlap{$\,\,\ssize\backslash$}{\intop}}\nolimits}
{\mathop{\raise.3ex\rlap{$\,\sssize\backslash$}{\intop}}\nolimits}
{\mathop{\raise.1ex\rlap{$\sssize\>\backslash$}{\intop}}\nolimits}
{\mathop{\rlap{$\sssize\<\>\backslash$}{\intop}}\nolimits}}
\let\gak\gamma 
\let\kk q 
\let\kp\kappa 
\let\cc c
\def\kkk{\kk_1\lc\kk_N}
\let\Ko K
\def\Kh{\Hat\Ko}
\def\GZ/{Gelfand-Zetlin}
\def\KZ/{{\slshape KZ\/}}
\def\qKZ/{{\slshape qKZ\/}}
\def\XXX/{{\slshape XXX\/}}
\def\zz{{\bs z}}
\def\qq{{\bs q}}
\def\TT{{\bs t}}
\def\Sym{\on{Sym}}
\def\ss{{\bs s}}
\nc{\A}{{\mc C}}
\def\glnn{{\frak{gl}_n}}
\def\GG{{\bs \Ga}}
\def\XX{{\mc X}}
\def\St{{\on{Stab}}}
\def\xx{{\bs x}}
\def\Czh{{(\C^N)^{\otimes n}\otimes\C(\zz;h)}}
\gdef\){\>\]}
\gdef\){\RIfM@\mskip.333333\thinmuskip\relax\else\kern.0555556em\fi}
\gdef\]{{\!\!\;}}
\def\fd/{fin\-ite-dim\-en\-sion\-al}
\def\glk{{$\frak{gl}_k$}}
\def\gkmod/{\$\glk$-module}
\def\gnmod/{\$\gln$-module}
\begin{document}

\hrule width0pt
\vsk->

\title[Hypergeometric solutions of the quantum differential
	equation ]
{Hypergeometric solutions of the quantum differential
	equation of the cotangent bundle\\ of a partial flag variety}

\author
[V.\,Tarasov and A.\,Varchenko]
{V.\,Tarasov$\>^\circ$ and A.\,Varchenko$\>^\diamond$}

\maketitle

\begin{center}
{\it $\kern-.4em^\circ\<$Department of Mathematical Sciences,
Indiana University\,--\>Purdue University Indianapolis\kern-.4em\\
402 North Blackford St, Indianapolis, IN 46202-3216, USA\/}

\vsk.5>
{\it $^\circ\<$St.\,Petersburg Branch of Steklov Mathematical Institute\\
Fontanka 27, St.\,Petersburg, 191023, Russia\/}

\vsk.5>
{\it $^{\diamond}\<$Department of Mathematics, University
of North Carolina at Chapel Hill\\ Chapel Hill, NC 27599-3250, USA\/}

\end{center}

{\let\thefootnote\relax
\footnotetext{\vsk-.8>\noindent
$^\circ\<${\sl E\>-mail}:\enspace vt@math.iupui.edu\>, vt@pdmi.ras.ru\>,
supported in part by NSF grant DMS-0901616\\
$^\diamond\<${\sl E\>-mail}:\enspace anv@email.unc.edu\>,
supported in part by NSF grant DMS-1101508}}

\begin{abstract}
We describe hypergeometric solutions of the quantum differential
equation of the cotangent bundle of a $\frak{gl}_n$ partial flag variety.
These hypergeometric solutions manifest the Landau-Ginzburg mirror
symmetry for the cotangent bundle of a partial flag variety.
\end{abstract}

\bigskip

{\small\tableofcontents}

\setcounter{footnote}{0}
\renewcommand{\thefootnote}{\arabic{footnote}}

\section{Introduction}

In \cite{MO}, D.\,Maulik and A.\,Okounkov develop a general theory connecting quantum groups
and equivariant quantum cohomology of Nakajima quiver varieties, see [N1, N2]. In particular, in \cite{MO}
the operators of quantum multiplication by divisors
are described. As well-known, these operators determine the equivariant
quantum differential equation of a quiver variety.
In this paper we apply this description to the cotangent bundles of $\frak{gl}_n$ partial flag varieties
and construct hypergeometric solutions of the associated equivariant quantum differential equation.

In \cite{GRTV} and \cite{RTV}, the equivariant quantum differential equation of the cotangent bundle of a $\frak{gl}_n$
partial flag variety was identified with the trigonometric dynamical differential equation introduced in
\cite{TV4}.
By the $(\gln,\frak{gl}_n)$-duality of \cite{TV4}, the trigonometric dynamical differential equation
is identified with the trigonometric KZ differential equation. Hypergeometric solutions
of the trigonometric KZ differential equation were constructed in \cite{SV, MV}.
By using this sequence of isomorphisms we obtain
hypergeometric solutions of the equivariant quantum differential equation.
The hypergeometric solutions have the form
\bean
\label{solhi}
I_\psi(\zz; \tilde \qq; h; \kappa) = \int_{\psi( \zz;\tilde \qq;h;\kappa)} \Phi(\ss;\zz;\tilde\qq; h)^{h/\kappa} \omega(\ss;\zz;\tilde\qq;h) d\bs s,
\eean
where $\zz=(z_1,\dots,z_n)$, $h$ are equivariant parameters, $\tilde\qq=(\tilde q_1,\dots,\tilde q_N)$ quantum parameters,
$\bs s=(s^{(i)}_j)$ integration variables, $\kappa$ the parameter of the
differential equation, $\psi(\zz;\tilde \qq;h;\kappa)$ the integration cycle in the $\bs s$-space,
$\omega(\ss;\zz;\tilde \qq;h)$ a rational function, $\Phi(\ss;\zz;\tilde \qq;h)$ the master function, see Corollary \ref{cor main}.

Studying solutions of the quantum differential equation may lead to better understanding
Gromow-Witten invariants of the cotangent bundle of a partial flag variety, c.f. Givental's study
of the $J$-function in \cite{G1, G2, G3}.

The existence of solutions of the quantum differential equation
as such oscillatory integrals manifests the Landau-Ginzburg mirror
symmetry for the cotangent bundle of a partial flag variety. One may think that the logarithm of the master
function is the Landau-Ginzburg potential of the mirror dual object. In particular,
one may expect that the algebra of functions on the critical set of the master function
$\Phi(\,\cdot\,; \zz;\tilde \qq;h)$ is isomorphic to the corresponding localization of the equivariant
quantum cohomology algebra of the cotangent bundle of a partial flag variety, see Section \ref{sec exa}
and similar statements for Bethe algebras in
\cite{MTV6, MTV8}.

\smallskip

The trigonometric KZ differential equation and trigonometric dynamical differential equation come in pairs
with compatible difference equations. The trigonometric KZ differential equation is compatible
with the rational dynamical difference equation introduced in \cite{TV3}.
The trigonometric dynamical differential equation is compatible
with the rational \qKZ/ difference equation, as shown in \cite{TV4}. Under the $(\gln,\frak{gl}_n)$-duality,
the pair consisting of the trigonometric KZ differential equation and rational dynamical
difference equation is identified with the pair consisting of
the trigonometric dynamical differential equation and rational \qKZ/ difference equation, see \cite{TV4}.
Under the identification of the trigonometric dynamical differential equation
with the equivariant quantum differential equation of \cite{BMO}, the rational \qKZ/ difference operators
are identified with the shift operators of \cite{MO}, see also \cite{BMO}.

It was shown in \cite{MV}, that the hypergeometric solutions for the trigonometric KZ differential equation
also satisfy the difference dynamical equation. By using all of the above identifications, we conclude
that the hypergeometric solutions \Ref{solhi} of the equivariant quantum differential equation give also
flat sections of the difference connection
defined by the shift operators of \cite{MO} on the equivariant quantum cohomology of the cotangent bundle of a partial $\frak{gl}_n$ flag variety.

\smallskip
As shown in \cite{TV1, TV2}, the \qKZ/ equation has solutions in the form of multidimensional
$q$-hypergeometric integrals. These $q$-hypergeometric integrals are expected to satisfy the compatible
dynamical differential equation. That will imply that the quantum differential equation of \cite{MO}
has solutions in the form of multidimensional $q$-hypergeometric
integrals. See an example of such solutions in Section 8.4 of \cite{GRTV}.
We plan to develop these $q$-hypergeometric solutions in a separate paper.

\medskip

\section{Cotangent bundles of partial flag varieties}
\label{sec EQ}

\subsection{Partial flag varieties}
\label{sec Partial flag varieties}
Fix natural numbers $N, n$. Let \,$\bla\in\Z^N_{\geq 0}$, \,$|\bla|=\la_1+\dots+\la_N =n$.
Consider the partial flag variety
\;$\Fla$ parametrizing chains of subspaces
\be
0\,=\,F_0\subset F_1\lsym\subset F_N =\,\C^n
\ee
with \;$\dim F_i/F_{i-1}=\la_i$, \;$i=1\lc N$.
Denote by \,$\tfl$ the cotangent bundle of \;$\Fla$.
Denote
\bea
\XX_n = \cup_{|\bla|=n} T^*\F_\bla .
\eea
\begin{example}
If $n=1$, then $\bla=(0,\dots,0,1_i,0,\dots,0)$, $T^*\F_\bla$ is a point and $\XX_1$ is the union of $N$ points.

If $n=2$ then $\bla= (0,\dots,0,1_i,0, \dots,0,1_j,0,\dots,0)$ or $\bla= (0,\dots,0,2_i,0,\dots,0)$.
In the first case $T^*\F_\bla$ is the cotangent bundle of projective line, in the second case $T^*\F_\bla$ is a point.
Thus $\XX_2$ is the union of $N$ points and $N(N-1)/2$ copies of the cotangent bundle of projective line.

\end{example}

Let $I=(I_1\lc I_N)$ be a partition of $\{1\lc n\}$ into disjoint subsets
$I_1\lc I_N$. Denote $\Il$ the set of all partitions $I$ with
$|I_j|=\la_j$, \;$j=1,\dots N$. Denote $\mc I_n = \cup_{|\bla|=n}\Il$.

Let $u_1,\dots,u_n$ be the standard basis of $\C^n$.
For any $I\in\Il$, let $x_I\in \F_\bla$ be the point corresponding to the coordinate flag
$F_1\subset\dots\subset F_N$, where $F_i$
\,is the span of the standard basis vectors \;$u_j\in\C^n$ with
\,$j\in I_1\lsym\cup I_i$. We embed $\F_\bla$ in $T^*\F_\bla$ as the zero section and consider
the points $x_I$ as points of $T^*\F_\bla$.

\subsection{Equivariant cohomology}
\label{sec:equiv}

Denote $G=GL_n(\C)\times \C^\times$.
Let \,$A\!\subset GL_n(\C)$ \,be the torus of diagonal matrices.
Denote $T=A\times\C^\times$ the subgroup of $G$.

The groups \,$A\!\subset GL_n$ act on \;$\C^n\<$ and hence on \,$\tfl$.
Let the group \;$\C^\times\<$ act on \,$\tfl$ by multiplication in each fiber.
We denote by $-h$ its $\C^\times$-weight.

\vsk.2>
We consider the equivariant cohomology algebras $H^*_{T}(\tfl;\C)$ and
\bea
H_T^*(\XX_n)=
\oplus_{|\bla|=n} H^*_{T}(\tfl;\C).
\eea
Denote by $\Ga_i=\{\ga_{i,1}\lc\ga_{i,\la_i}\}$ the set of
the Chern roots of the bundle over $\Fla$ with fiber $F_i/F_{i-1}$.
Let \;$\GG=(\Ga_1\<\>\lsym;\Ga_N)$. Denote by $\zb=\{\zzz\}$ the Chern roots
corresponding to the factors of the torus $T$.
Then
\vvn-.2>
\bea
\label{Hrel}
H^*_{T}(\tfl)
\, = \,\C[\GG]^{S_{\la_1}\times\dots\times S_{\la_N}}
\otimes \C[\zz]\otimes\C[h]
\>\Big/\Bigl\bra\,
\prod_{i=1}^N\prod_{j=1}^{\la_i}\,(u-\ga_{\ij})\,=\,\prod_{a=1}^n\,(u-z_a)
\Bigr\ket\,.
\eea
The cohomology
$H^*_{T}(\tfl)$ is a module over $H^*_{T}({pt};\C)=\C[\zz]\otimes\C[h]$.

\begin{example}
If $n=1$, then
\bea
H_T^*(\XX_1) = \oplus_{i=1}^N H_T^*(T^*\F_{(0,\dots,0,1_i,0,\dots,0)})
\eea
is naturally isomorphic to $\C^N\otimes\C[z_1;h]$ with basis
$v_i=(0,\dots,0,1_i,0,\dots,0)$, $i=1,\dots,N$.
\end{example}

For $i=1,\dots,N$, denote $\la^{(i)}=\la_1+\dots+\la_i$. Denote $\Theta_i=\{\theta_{i,1}, \dots, \theta_{i,\la^{(i)}}\}$
the Chern roots of the bundle $\bs F_i$
over $\F_\bla$ with fiber $F_i$. Let $\bs \Theta=(\Theta_1,\dots,\Theta_{N})$. The relations
\bea
\prod_{j=1}^{\la^{(i)}}(u-\theta_{i,j}) = \prod_{\ell=1}^i\prod_{j=1}^{\la_i}\,(u-\ga_{\ij}), \qquad i=1,\dots,N,
\eea
define the homomorphism
\bea
\C[\bs \Theta]^{S_{\la^{(1)}}\times\dots\times S_{\la^{(N)}}}
\otimes \C[\zz]\otimes\C[h]\to H^*_{T}(\tfl).
\eea

\section{Yangian }
\label{sec Yang}

\subsection{Yangian $\Yn$}
\label{sec yangian}

The Yangian $\Yn$ is the unital associative algebra with generators
\,$T_{\ij}\+s$ \,for \,$i,j=1\lc N$, \;$s\in\Z_{>0}$, \,subject to relations
\vvn.3>
\bea
\label{ijkl}
(u-v)\>\bigl[\<\>T_{\ij}(u)\>,T_{\kl}(v)\<\>\bigr]\>=\,
T_{\kj}(v)\>T_{\il}(u)-T_{\kj}(u)\>T_{\il}(v)\,,\qquad i,j,k,l=1\lc N\,,
\vv.3>
\eea
where
$ T_{\ij}(u)=\dl_{\ij}+\sum_{s=1}^\infty\,T_{\ij}\+s\>u^{-s}\>.$
The Yangian $\Yn$ is a Hopf algebra with the coproduct
\;$\Dl:\Yn\to\Yn\ox\Yn$ \,given by
\vvn.16>
\;$\Dl\bigl(T_{\ij}(u)\bigr)=\sum_{k=1}^N\,T_{\kj}(u)\ox T_{\ik}(u)$ \,for
\,$i,j=1\lc N$\>. The Yangian $\Yn$ contains, as a Hopf subalgebra, the universal enveloping algebra \>$\Ugln$ of the Lie algebra $\gln$.
The embedding is given by \,$e_{\ij}\mapsto T_{\ji}\+1$, where $e_{\ij}$ are standard standard generators of \,$\gln$.

\vsk.2>
Notice that \,$\bigl[\<\>T_{\ij}\+1,T_{\kl}\+s\<\>\bigr]\>=\,
\dl_{\il}\>T_{\kj}\+s-\dl_{\jk}\>T_{\il}\+s$ \,for \,$i,j,k,l=1\lc N$,
\;$s\in\Z_{>0}$\,, \,which implies that the Yangian \>$\Yn$ is generated by
the elements \,$T_{\ii+1}\+1\>,\>T_{\ioi}\+1$, \;$i=1\lc N-1$, \,and
\,$T_{1,1}\+s$, \;$s>0$.

\subsection{Algebra \,$\ty\>$}
\label{sec tilde Y}

In this section we follow \cite[Section 3.3]{GRTV}.
In formulas of that Section 3.3 we replace $h$ with $-h$.

Let \,$\ty$ be the subalgebra of \,$\Yn\ox\C[h]$ generated over \,$\C\>$
by \,$\C[h]$ and the elements \,$(-h)^{s-1}\>T_{\ij}\+s$ \,for \,$i,j=1\lc N$,
\;$s>0$. Equivalently, the subalgebra \,$\ty$ \,is generated over \,$\C\>$ by
\,$\C[h]$ and the elements \,$T_{\ii+1}\+1\>,\>T_{\ioi}\+1$,
\;$i=1\lc N-1$, \,and \,$(-h)^{s-1}\>T_{1,1}\+s$, \;$s>0$.

For \;$p=1\lc N$, \;$\ib=\{1\leq i_1<\dots<i_p\leq N\}$,
\;$\jb=\{1\leq j_1<\dots<j_p\leq N\}$, define
\vvn-.3>
\be
M_{\ijb}(u) =\sum_{\si\in S_p}(-1)^\si\,
T_{i_1,j_{\si(1)}}(u)\dots T_{i_p,j_{\si(p)}}(u-p+1)\,.
\ee
Introduce the series \,$A_1(u)\lc A_N(u)$, \,$E_1(u)\lc E_{N-1}(u)$,
\,$F_1(u)\lc F_{N-1}(u)$:
\vvn.1>
\begin{gather}
\label{A}
A_p(u)\,=\,M_{\iib}(-u/h)\,=\,
1+\sum_{s=1}^\infty\,(-h)^s\>A_{p,s}\,u^{-s}\,,
\\[3pt]
E_p(u)\,=\,-h^{-1}M_{\jib}(-u/h)\>\bigl(M_{\iib}(-u/h)\bigr)^{-1}\,=\,
\sum_{s=1}^\infty\,(-h)^{s-1}\>E_{p,s}\,u^{-s}\,,
\label{EF}
\\[3pt]
F_p(u)\,=\,-h^{-1}\bigl(M_{\iib}(-u/h)\bigr)^{-1}M_{\ijb}(-u/h)\,=\,
\sum_{s=1}^\infty\,(-h)^{s-1}\>F_{p,s}\,u^{-s}\,,
\notag
\\[-20pt]
\notag
\end{gather}
where in formulas \Ref{A} and \Ref{EF} we have \,$\ib=\{1\lc p\>\}$\>, \,$\jb=\{1\lc p-1,p+1\>\}$\>.
Observe that \,$E_{p,1}=T_{\pop}\+1$\>, \,$F_{p,1}=T_{\ppo}\+1$
\,and \,$A_{1,s}=T_{1,1}\+s$, so the coefficients of the series
\,$E_p(u)$, \,$F_p(u)$ and \,$h^{-1}(A_p(u)-1\bigr)$ together with \,$\C[h]$
generate \,$\ty$.
In what follows we will describe actions of the algebra \,$\ty$ by using series
\Ref{A}, \Ref{EF}.

\subsection{$\ty$-action on $(\C^N)^{\otimes n}\otimes\C[\zz;h]$}
\label{sec yang act C^N}

Set
\bea
L(u)\,=\,(u-z_n-h\<\>P^{(0,n)})\dots(u-z_1-h\<\>P^{(0,1)})\,,
\eea
where the factors of \,$\C^N\!\ox (\C^N)^{\otimes n}$ are labeled by \,$0,1\lc n$ and $P^{(i,j)}$ is the permutation of the
$i$-th and $j$-th factors.
The operator
$L(u)$ is a polynomial in \,$u,\zz,h$ \,with values
in \,$\End(\C^N\!\ox (\C^N)^{\otimes n})$. We consider $L(u)$ as an $N\!\times\!N$ matrix
with \,$\End(V)\ox\C[u;\zz; h]\>$-valued entries \,$L_{\ij}(u)$.

\begin{prop} [Proposition 4.1 in \cite{GRTV}]

The assignment
\vvn-.2>
\beq
\label{pho}
\pho\bigl(T_{\ij}(-u/h)\bigr)\,=\,
L_{\ij}(u)\,\prod_{a=1}^n\,(u-z_a)^{-1}
\vv-.1>
\eeq
defines the action of the algebra \,$\ty$
\vvn.2>
on \,$(\C^N)^{\otimes n}\otimes\C[\zz;h]$\,. Here the right-hand side of \Ref{pho} is a series
in \,$u^{-1}$ with coefficients in \,$\End((\C^N)^{\otimes n})\otimes\C[\zz;h])$\>.
\end{prop}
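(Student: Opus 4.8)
The plan is to recognize \Ref{pho} as the standard monodromy-matrix construction of a Yangian representation, so that the only essential verification is an $RLL$-relation coming from the Yang--Baxter equation. First I would put \Ref{ijkl} into matrix form: assembling the generating series into a matrix $T(u)=\sum_{\ij}e_{\ij}\ox T_{\ij}(u)$ acting in an auxiliary copy of $\C^N$, relation \Ref{ijkl} is equivalent to the single identity
\be
R(u-v)\,T^{(1)}(u)\,T^{(2)}(v)\,=\,T^{(2)}(v)\,T^{(1)}(u)\,R(u-v)
\ee
with the rational $R$-matrix $R(x)=1+P/x$, where $P$ permutes the two auxiliary spaces and the superscripts record which auxiliary factor carries $T$. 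Substituting $u\mapsto-u/h$ and clearing the scalar denominator replaces $R$ by $\Hat R(x)=x-h\>P$, which is exactly the matrix built into the factors of $L(u)$.

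Next I would note that each factor $u-z_a-h\>P^{(0,a)}$ of $L(u)$ is precisely $\Hat R(u-z_a)$ acting between the auxiliary space (labelled $0$) and the $a$-th tensor factor. Since $\prod_a(u-z_a)^{-1}$ is a central scalar and commutes past everything, the proposition reduces to the assertion that $L(u)$ itself obeys the $RLL$-relation
\be
\Hat R_{00'}(u-u')\,L_0(u)\,L_{0'}(u')\,=\,L_{0'}(u')\,L_0(u)\,\Hat R_{00'}(u-u')
\ee
in two auxiliary spaces $0,0'$, where $L_0(u)=\prodl_{a=1}^n\Hat R_{0a}(u-z_a)$. For a single site this is literally the Yang--Baxter equation for $\Hat R(x)=x-h\>P$, with the three spectral parameters $u-u'$, $u-z_a$, $u'-z_a$ in the required additive relation; and that equation is the familiar computation for the rational $R$-matrix. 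For the full product I would propagate $\Hat R_{00'}(u-u')$ through $L_0(u)\,L_{0'}(u')$ one site at a time, using the single-site Yang--Baxter equation together with the fact that factors attached to distinct sites commute. This ``train'' argument yields the $RLL$-relation for the ordered product by induction on $n$, hence \Ref{ijkl} for the operators $\Hat T_{\ij}(u)=L_{\ij}(u)\prod_a(u-z_a)^{-1}$.

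It then remains to check that the homomorphism lands in $\ty$ and not merely in $\Yn\ox\C[h]$. Expanding $\Hat T_{\ij}(u)=\pho\bigl(T_{\ij}(-u/h)\bigr)$ in powers of $u^{-1}$, the coefficient of $u^{-s}$ equals $(-h)^s\pho(T_{\ij}\+s)$, so the generator $(-h)^{s-1}T_{\ij}\+s$ of $\ty$ is sent to $-h^{-1}$ times that coefficient. The crucial observation is that at $h=0$ every factor of $L(u)$ degenerates to the scalar $u-z_a$, so $\Hat T_{\ij}(u)\big|_{h=0}=\dl_{\ij}$ and $\Hat T_{\ij}(u)-\dl_{\ij}$ is divisible by $h$. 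Hence each coefficient of $u^{-s}$ is divisible by $h$, and $\pho\bigl((-h)^{s-1}T_{\ij}\+s\bigr)$ is a genuine polynomial operator in $\End\bigl((\C^N)^{\otimes n}\bigr)\ox\C[\zz;h]$. This divisibility is exactly what distinguishes the $\ty$-action from the ambient Yangian action, and it is the only place where the rescaling $u\mapsto-u/h$ is indispensable.

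I expect the main difficulty to be bookkeeping rather than conceptual: matching the sign and normalization of $\Hat R(x)=x-h\>P$ to relation \Ref{ijkl} after the substitution $u\mapsto-u/h$, and tracking the auxiliary-space labels through the train argument so that the spectral parameters line up correctly in each application of the Yang--Baxter equation.
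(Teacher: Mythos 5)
Your argument is correct and is essentially the standard proof: the paper itself does not prove this proposition but imports it from \cite{GRTV} (Proposition 4.1 there), where the verification is exactly the $RLL$/Yang--Baxter ``train'' computation you describe for $L(u)=\prod\bigl(u-z_a-h\>P^{(0,a)}\bigr)$, followed by division by the central scalar $\prod_a(u-z_a)$. The one point genuinely specific to $\ty$ rather than $\Yn\ox\C[h]$ --- that $L_{\ij}(u)\prod_a(u-z_a)^{-1}-\dl_{\ij}$ vanishes at $h=0$, so each coefficient of $u^{-s}$ is divisible by $h$ and the generators $(-h)^{s-1}T_{\ij}\+s$ act by operators polynomial in $h$ and $\zz$ --- you identify and justify correctly.
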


Under this action, the subalgebra $U(\gln)\subset\ty$ acts on $(\C^N)^{\otimes n}\otimes\C[\zz;h]$
in the standard way: any element \,$x\in\gln$ \,acts as $x^{(1)}\lsym+x^{(n)}$.
The action $\pho$ was denoted in \cite{GRTV} by $\pho^+$.

\subsection{$\ty$-action on $H^*_T(\XX_n)$ according to \cite{RTV}}
\label{sec cohom and Yang}

We define the \>$\ty$-action $\rho$ on \;$H_T^*(\XX_n)$ by formulas \Ref{Arho}, \Ref{rho E}, \Ref{rho F} below.
We define
\;$\rho\bigl(A_p(u)\bigr): H^*_T(T^*\F_\bla)\to H^*_T(T^*\F_\bla)$ by
\vvn.2>
\beq
\label{Arho}
\rho\bigl(A_p(u)\bigr)\>:\>[f]\;\mapsto\,
\Bigl[\>f(\GG;\zb;h)\;\prod_{a=1}^p\,\prod_{i=1}^{\la_p}
\;\Bigl(1-\frac h{u-\ga_{\pci}}\Bigr)\Bigr]\,,
\vv.3>
\eeq
for \,$p=1\lc N$. In particular,
\vvn.4>
\bean
\label{rhoX inf}
\rho(\Xin_i)\>:\>[f]\;\mapsto\,
[\>(\ga_{i,1}\lsym+\<\ga_{i,\la_i})\>f(\GG;\zb;h)]\,,\qquad i=1\lc N\,.
\kern-4em
\eean
Let \;$\aal_1\lc\aal_{N-1}$ \,be simple roots,
\,$\aal_p=(0\lc0,1,\<-\>1,0\lc0)$, with \,$p-1$ first zeros.
We define
\vvn-.6>
\bea
\rho\bigl(E_p(u)\bigr)\<\>:\>H^*_T(T^*\F_{\bla\<\>-\aal_p})\,\mapsto\,H^*_T(T^*\F_\bla)\,,
\eea
\bean
\label{rho E}
\phantom{aaaaa}
\rho\bigl(E_p(u)\bigr)\<\>:\>[f]\;\mapsto\,\biggl[\;
\sum_{i=1}^{\la_p}\;\frac{f(\GG^{\ipi};\zb;h)}{u-\ga_{\pci}}\,\;
\prod_{\satop{j=1}{j\ne i}}^{\la_p}\,\frac1{\ga_{\pcj}-\ga_{\pci}}\;
\prod_{k=1}^{\la_{p+1}\!}\,(\ga_{\pci}-\ga_{p+1,k}-h)\,\biggr]\,,
\eean
\vv-.3>
\bea
\rho\bigl(F_p(u)\bigr)\<\>:\>H^*_T(T^*\F_{\bla\<\>+\aal_p})\,\mapsto\,H^*_T(T^*\F_{\bla})\,,
\eea
\bean
\label{rho F}
\phantom{aaaaa}
\rho\bigl(F_p(u)\bigr)\<\>:\>[f]\;\mapsto\,\biggl[\;
\sum_{i=1}^{\la_{p+1}\!}\;\frac{f(\GG^{\ip};\zb;h)}{u-\ga_{\poi}}\,\;
\prod_{\satop{j=1}{j\ne i}}^{\la_{p+1}\!}\,\frac1{\ga_{\poi}-\ga_{\poj}}\;
\prod_{k=1}^{\la_p}\,(\ga_{p,k}-\ga_{\poi}-h)\,\biggr]\,,
\eean
where
\vvn-.5>
\begin{gather*}
\GG^{\ipi}=\,(\Ga_1\<\>\lsym;\Ga_{p-1}\<\>;\Ga_p-\{\ga_{\pci}\};
\Ga_{p+1}\cup\{\ga_{\pci}\};\Ga_{p+2}\<\>\lsym;\Ga_N)\,,
\\[8pt]
\GG^{\ip}=\,(\Ga_1\<\>\lsym;\Ga_{p-1}\<\>;\Ga_p\cup\{\ga_{\poi}\};
\Ga_{p+1}-\{\ga_{\poi}\};\Ga_{p+2}\<\>\lsym;\Ga_N)\,.
\\[-14pt]
\end{gather*}

\begin{thm} [Theorem 5.10 in \cite{GRTV}]
These formulas define a \>$\ty$-module structure on \;$H^*_T(\XX_n)$.
\end{thm}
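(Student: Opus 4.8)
The statement to prove is that formulas \Ref{Arho}, \Ref{rho E}, and \Ref{rho F} define a genuine $\ty$-module structure on $H^*_T(\XX_n)$. Since the theorem is quoted as Theorem 5.10 in \cite{GRTV}, the cleanest route is to verify that $\rho$ intertwines with the already-established action $\pho$ from \Ref{pho} through an explicit isomorphism of vector spaces. Let me sketch how I would do this from scratch.

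The plan is to reduce the problem to checking a finite list of defining relations of $\ty$. Recall from Section \ref{sec tilde Y} that $\ty$ is generated over $\C[h]$ by the coefficients of the series $E_p(u)$, $F_p(u)$, and $h^{-1}(A_p(u)-1)$, so it suffices to verify the relations among these series. These relations are the Gelfand-Zetlin-type commutation relations inherited from the $RTT$-relation \Ref{ijkl}: namely, the commutativity $[A_p(u),A_{p'}(v)]=0$, the action of the $A_p$ on $E_{p'}$ and $F_{p'}$ (which produce rational factors $(u-v)^{\pm1}$-type shifts), the Serre-type relations among the $E_p$'s and among the $F_p$'s for adjacent indices, and the crucial $[E_p(u),F_{p'}(v)]$ relation that closes the algebra. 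First I would fix the explicit vector-space identification between $H^*_T(T^*\F_\bla)$, presented via \Ref{Hrel} as symmetric functions in $\GG$ modulo the characteristic-polynomial relation, and the corresponding weight subspace of $(\C^N)^{\otimes n}\otimes\C[\zz;h]$ under $\pho$. Under this identification one transports the operators $\rho(A_p(u))$, $\rho(E_p(u))$, $\rho(F_p(u))$ and checks they coincide with $\pho(A_p(u))$, $\pho(E_p(u))$, $\pho(F_p(u))$; once this is done, the relations hold automatically because $\pho$ is already known to be an action by the Proposition above.

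The key computational steps are therefore: (i) show $\rho(A_p(u))$ acts diagonally in a basis of weight vectors with the same eigenvalues as $\pho(A_p(u))$ — this is immediate from \Ref{Arho}, since multiplication by $\prod(1-h/(u-\ga_{\pci}))$ is exactly the eigenvalue of the Gelfand-Zetlin element on the corresponding weight space; (ii) match $\rho(E_p(u))$ and $\rho(F_p(u))$ with the raising/lowering parts of $\pho$. Here the residue-sum structure of \Ref{rho E} and \Ref{rho F}, with the Lagrange-interpolation denominators $\prod_{j\ne i}(\ga_{\pcj}-\ga_{\pci})^{-1}$ and the weight-shift factors $\prod_k(\ga_{\pci}-\ga_{p+1,k}-h)$, is precisely what the $\pho$-image of these generators produces once one expands $L(u)\prod_a(u-z_a)^{-1}$ and extracts the off-diagonal entries. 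The well-definedness on the quotient \Ref{Hrel} must be checked: one verifies that the symmetrized residue sums are indeed symmetric in each $\Ga_i$ and respect the characteristic-polynomial relation, so that $\rho$ descends to the quotient.

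The main obstacle will be the $[E_p(u),F_p(v)]$ relation, which is the only place where the two operators genuinely interact and where the rational denominators can collide. Verifying it directly from \Ref{rho E} and \Ref{rho F} requires a delicate partial-fraction and residue computation: one applies $\rho(F_p(v))$ then $\rho(E_p(u))$ (and the opposite order), and the double residue sums must be reorganized so that the terms with coinciding variables $\ga_{\pci}$ recombine into a single $A_p$-type diagonal contribution with the correct $(u-v)^{-1}$ pole. I expect this to be the heart of the argument; everything else is bookkeeping. If the direct approach proves unwieldy, the fallback is to invoke the intertwining with $\pho$ established in steps (i)--(ii) and let the already-proven Proposition do the work, which is why I would set up that identification first.
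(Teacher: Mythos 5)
The paper does not actually prove this statement: it quotes it as Theorem 5.10 of \cite{GRTV}, so there is no in‑paper argument to compare against. Judged on its own terms, your plan points in the right direction — the route taken in \cite{GRTV} is essentially your ``fallback'': transport the action $\pho$ of \Ref{pho} to cohomology through the fixed‑point/weight‑function identification (the maps $\St_{\on{id}}$ and $\nu$ of Section 5 of the present paper) and recognize \Ref{Arho}, \Ref{rho E}, \Ref{rho F} as the conjugated generators, so that the $\ty$-relations are inherited from the Proposition. But as written your proposal has two genuine gaps. First, the ``explicit vector‑space identification'' you want to set up exists only after localization: the inverse map $\nu$ in \Ref{xi} has denominators $R(\zz_I)$, $Q(\zz_I)$, $c_\bla(\zz_I)$, so it identifies $H^*_T(\XX_n)\otimes\C(\zz;h)$ with $V\otimes\C(\zz;h)$, not the integral forms over $\C[\zz;h]$. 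This is repairable (the relations are polynomial identities and $H^*_T(\XX_n)$ embeds into its localization, and you do flag well‑definedness on the quotient \Ref{Hrel}), but the localization step has to be stated, since otherwise ``transporting the operators'' is not defined.

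Second, and more seriously, your step (i) is not ``immediate from \Ref{Arho}''. The operator $\pho(A_p(u))$ — a quantum minor of $L(u)$, i.e.\ a Gelfand--Zetlin generator — is \emph{not} diagonal on the standard basis of $(\C^N)^{\otimes n}\otimes\C[\zz;h]$. The assertion that it matches the multiplication operator \Ref{Arho} under the identification is equivalent to the statement that the vectors $\xi_I$ of \Ref{xi} are eigenvectors of $A_p(u)$ with eigenvalues $\prod_{a\in I_1\cup\dots\cup I_p}\bigl(1-h/(u-z_a)\bigr)$; this is the nontrivial Bethe‑ansatz‑type input on which the whole reduction rests, and the analogous matching for $E_p(u)$, $F_p(u)$ requires the recursion identities for the weight functions. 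These computations are of the same order of difficulty as the $[E_p(u),F_{p'}(v)]$ relation you defer in the direct approach. So in either branch of your plan the heart of the proof is named but not carried out; what you have is a correct outline rather than a proof.
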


This \>$\ty$-module structure was denoted in \cite{GRTV} by $\rho^-$ and $h$ in \cite{GRTV} is replaced with $-h$.
The topological interpretation of this \>$\ty$-action see in \cite[Theorem 5.16]{GRTV}.

\smallskip
In \cite{MO}, a Yangian module structure on $H^*_T(\XX_n)$ was introduced.

\begin{thm} [Corollary 6.4 in \cite{RTV}]
\label{yang str the same}
The $\ty$-module structure $\rho$ on $H^*_T(\XX_n)$ coincides with the Yangian module structure on
$H^*_T(\XX_n)$ introduced in \cite{MO}.

\end{thm}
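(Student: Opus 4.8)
The plan is to match the two $\ty$-module structures on generators. By Section~\ref{sec tilde Y}, the algebra $\ty$ is generated over $\C$ by $\C[h]$ together with the coefficients of the series $A_p(u)$, $E_p(u)$, $F_p(u)$ (equivalently, by $T_{\ii+1}\+1$, $T_{\ioi}\+1$ and $(-h)^{s-1}T_{1,1}\+s$). Both the action $\rho$ of Section~\ref{sec cohom and Yang} and the action of \cite{MO} are defined on the same space $H^*_T(\XX_n)$ and are $\C[\zz]\ox\C[h]$-linear. Hence it suffices to show that the \cite{MO} operators corresponding to these generators coincide with the explicit operators $\rho(A_p(u))$, $\rho(E_p(u))$, $\rho(F_p(u))$ of \Ref{Arho}, \Ref{rho E}, \Ref{rho F}; agreement on the algebra generators then forces agreement on all of $\ty$. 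The comparison is most conveniently made after restricting to the torus-fixed-point classes $x_I$, $I\in\mc I_n$, equivalently after passing to the stable-envelope basis of \cite{MO}.

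First I would treat the commutative part. The coefficients of $A_p(u)$ act by \Ref{Arho} as multiplication by the symmetric functions of the Chern roots $\Ga_p$, that is, by characteristic classes of the tautological quotient bundles with fiber $F_p/F_{p-1}$. On the \cite{MO} side the commuting subalgebra of the Yangian acts by cup product with precisely these tautological classes, which is one of the structural outputs of the stable-envelope construction. Matching the two is then a direct identification of Chern classes through the presentation of $H^*_T(\tfl)$ in Section~\ref{sec:equiv}, in which the relation $\prod_{i=1}^{N}\prod_{j=1}^{\la_i}(u-\ga_{\ij})=\prod_{a=1}^{n}(u-z_a)$ records all tautological data; the only bookkeeping is the replacement $h\mapsto-h$ of the $\C^\times$-weight fixed in Sections~\ref{sec tilde Y} and~\ref{sec cohom and Yang}.

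The heart of the argument is the raising and lowering part. Geometrically, $E_p(u)$ and $F_p(u)$ correspond to the \cite{MO} operators attached to the Hecke-type correspondence relating $T^*\F_\bla$ to its neighbors $T^*\F_{\bla-\aal_p}$ and $T^*\F_{\bla+\aal_p}$; these are extracted from the $R$-matrix $R=\St_{-\CC}^{-1}\St_{\CC}$, where $\St_{\CC}\colon H^*_T(\XX_n^A)\to H^*_T(\XX_n)$ is the stable envelope for a chamber $\CC$ of the torus $A$. The plan is to use an explicit formula for $\St_{\CC}$: the stable classes restrict to the fixed points $x_I$ through explicit products (weight functions), and from these restrictions one reads off the matrix coefficients of the geometric $E_p, F_p$ in the fixed-point basis. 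The main obstacle is exactly this computation: one must check that those coefficients equal, term by term, the residue-type sums in \Ref{rho E} and \Ref{rho F}, reproducing both the denominators $\prod_{j\ne i}(\ga_{\pcj}-\ga_{\pci})^{-1}$ and the weight factors $\prod_{k}(\ga_{\pci}-\ga_{p+1,k}-h)$ with the correct signs and spectral shifts.

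A more economical route bypasses most of this computation by using the tensor structure. Both module structures realize $H^*_T(\XX_n)$ as the tensor product of $n$ vector evaluation modules over $\Yn$ with evaluation parameters $\zzz$: the \cite{MO} action is multiplicative for the stable-envelope tensor structure, while $\rho$ is, under the identification of Sections~\ref{sec yang act C^N}--\ref{sec cohom and Yang}, the iterated coproduct $\Dl$ of the action $\pho$, whose defining $L$-operator is itself a product over the $n$ sites. One then verifies the coincidence only for $n=1$, where $H^*_T(\XX_1)\cong\C^N\ox\C[z_1;h]$ is the standard evaluation module in both descriptions (see the Example in Section~\ref{sec:equiv}), together with compatibility with $\Dl$; the general case follows by functoriality of the tensor product. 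In either route the crux is the explicit evaluation of the \cite{MO} stable envelope, and I expect that stable-envelope computation to be the principal difficulty.
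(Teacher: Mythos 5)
This theorem is quoted from \cite{RTV} (Corollary 6.4 there); the present paper contains no proof of it, so there is no in-paper argument to compare against line by line. Judged against the machinery the paper does set up, the intended argument is shorter and less computational than either of your routes: Theorem \ref{thm stab yang} already states that $\St_{\on{id}}\colon V\ox\C[\zz;h]\to H^*_T(\XX_n)$ intertwines the action $\pho$ with $\rho$, and Lemma \ref{lem nu stab = 1} states that $\St_{\on{id}}$ is invertible. On the \cite{MO} side, the Yangian action is \emph{defined} by transporting the standard action on the tensor product of evaluation modules (which is $\pho$) through the geometric stable envelope and the R-matrices built from it. So once the combinatorial map $\St_{\on{id}}$ of \Ref{StW} is identified with the geometric stable envelope of \cite{MO} (the content of the weight-function theorems in \cite{RTV}, via the uniqueness axioms for stable envelopes), both actions are the conjugate of $\pho$ by the same invertible map, and they coincide. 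Your proposal never invokes Theorem \ref{thm stab yang}, which is the key lemma available here.

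Of your two routes, the first (matching $\rho(A_p)$, $\rho(E_p)$, $\rho(F_p)$ against the \cite{MO} operators coefficient by coefficient in the fixed-point basis) is a genuinely different and much heavier path; it would work in principle but reproves by hand what the intertwining theorem packages. The second route is closer in spirit, but as stated it has a gap: checking the coincidence for $n=1$ and then appealing to compatibility with $\Dl$ is not enough, because the \cite{MO} module structure for $n\ge 2$ is built from the geometric R-matrix $R=\St_{-}^{-1}\St_{+}$, and one must identify this with the standard rational R-matrix $R(u)=(u-hP)/(u-h)$ underlying $\pho$. That identification is invisible at $n=1$; it requires the stable-envelope computation at $n=2$ (or, equivalently, the general weight-function formula for $\St$). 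You correctly flag the stable-envelope evaluation as the crux, but you locate the reduction at the wrong level: the needed input is the identification of the two stable envelopes (and hence of the R-matrices), not a rank-one check plus coproduct functoriality.
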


\section{Dynamical Hamiltonians and quantum multiplication}
\label{DynHam}

\subsection{Dynamical Hamiltonians}
\label{sec dyn hams}
Assume that \,$\kkk$ \>are distinct numbers. Define the elements
\,$\Xk_1\lc\Xk_N\in\ty$ by the rule
\vvn.3>
\begin{align}
\label{X}
\Xk_i\, {}=\,-h\>T_{\ii}\+2+\>\frac h2\,e_{\ii}\>\bigl(\<\>e_{\ii}-1\bigr)-\>
h\>\sum_{\satop{j=1}{j\ne i}}^N\,\frac{\kk_j}{\kk_i-\kk_j}\,G_{\ij}\,,
\notag
\end{align}
where $G_{\ij}\>=\,e_{\ij}\>e_{\ji}\<-e_{\ii}\>=\,e_{\ji}\>e_{\ij}\<-e_{\jj}\>.$
By taking the limit $\kk_{i+1}/\kk_i\to0$ \,for all \,$i=1\lc N-1$,
we define the elements \,$\Xin_1\lc\Xin_N\in\ty$,
\vvn-.1>
\bea
\label{X8}
\Xin_i {}=\,-h\>T_{\ii}\+2+\>\frac h2\,e_{\ii}\>\bigl(\<\>e_{\ii}-1\bigr)
+h\>(G_{i,1}\<\lsym+\<\>G_{\ii-1})\,,
\eea
see \cite{GRTV}. The elements
\vvn.1>
\,$\Xk_i\<,\> \Xin_i$, \,$i=1\lc N$, are called the dynamical Hamiltonians.
Observe that
\vvn-.3>
\bea
\label{XX}
\Xk_i\,=\,\Xin_i-\>h\>\sum_{j=1}^{i-1}\,\frac{\kk_i}{\kk_i-\kk_j}\,G_{\ij}
\>-\>h\!\sum_{j=i+1}^n\frac{\kk_j}{\kk_i-\kk_j}\,G_{\ij}\,.
\eea
Given \,$\bla=(\la_1\lc\la_N)$, \,set
\vvn.1>
\;$G_{\bla\<\>,\<\>\ij}\>=\,e_{\ji}\>e_{\ij}$ \,for \,$\la_i\ge\la_j$\>
\,and
\;$G_{\bla\<\>,\<\>\ij}=\>e_{\ij}\>e_{\ji}$ \,for \,$\la_i<\la_j$\>.
We define the elements
\,$X^q_{\bla\<\>,1}\,\lc X^q_{\bla\<\>,\<\>N}\<\in\ty$,
\bea
\label{Xkm}
X^q_{\bla\<\>,\<\>i}\>=\,\Xin_i-
\>h\>\sum_{j=1}^{i-1}\,\frac{\kk_i}{\kk_i-\kk_j}\,G_{\bla\<\>,\<\>\ij}\>-
\>h\!\sum_{j=i+1}^n\frac{\kk_j}{\kk_i-\kk_j}\,G_{\bla\<\>,\<\>\ij}\,.
\eea
Let $\kappa\in\C^\times$. The formal differential operators
\vvn-.4>
\beq
\label{dyneq}
\nabla_{\qq,\kp,i}\>=\,\kp\,\ddk_i\>-\>\Xk_i,\qquad i=1\lc N,
\eeq
pairwise commute and, hence, define a flat connection $\nabla_{\qq,\kp}$ for any \,$\ty$-module, see \cite{GRTV}.

\begin{lem}[Lemma 3.5 in \cite{GRTV}]
\label{flat+-}
The connection \;\;$\nabla_{\<\bla,\qq,\kp}\<$ defined by
\vvn.2>
\bean
\label{nablapm}
\nabla_{\<\bla\<\>,\qq,\kp,\<\>i}=\,\kp\,\ddk_i\>-\>X^q_{\bla\<\>,\<\>i}\,,
\vv-.1>
\eean
$i=1\lc N$, is flat for any \,$\kp$.
\end{lem}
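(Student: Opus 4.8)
The plan is to deduce the flatness of $\nabla_{\bla,\qq,\kp}$ from the flatness of the unmodified dynamical connection $\nabla_{\qq,\kp}$ of \Ref{dyneq}, which is already available for any $\ty$-module. The key observation is that, after restriction to the weight subspace $H^*_T(\tfl)\subset H^*_T(\XX_n)$, the operators $X^q_{\bla,i}$ differ from the $\Xk_i$ only by scalar functions of $\qq$, so the two connections differ by an abelian $1$-form and flatness becomes a closedness check.

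First I would record the identities $e_{\ji}e_{\ij}=G_{\ij}+e_{\jj}$ and $e_{\ij}e_{\ji}=G_{\ij}+e_{\ii}$, immediate from $G_{\ij}=e_{\ij}e_{\ji}-e_{\ii}=e_{\ji}e_{\ij}-e_{\jj}$. Comparing \Ref{Xkm} with \Ref{XX}, these give $G_{\bla,\ij}-G_{\ij}=e_{\jj}$ when $\la_i\ge\la_j$ and $G_{\bla,\ij}-G_{\ij}=e_{\ii}$ when $\la_i<\la_j$. Since $H^*_T(\tfl)$ is the weight-$\bla$ subspace, on which each Cartan generator $e_{kk}$ acts as the scalar $\la_k$ and on which every weight-preserving $\Xk_i$ acts, the operator $G_{\bla,\ij}-G_{\ij}$ acts on $H^*_T(\tfl)$ as the number $\min(\la_i,\la_j)$. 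Writing $m_{ij}=\min(\la_i,\la_j)$, I would conclude that on $H^*_T(\tfl)$
\beq
X^q_{\bla,i}=\Xk_i+c_i,\qquad c_i=-h\sum_{j<i}\frac{\kk_i}{\kk_i-\kk_j}\,m_{ij}-h\sum_{j>i}\frac{\kk_j}{\kk_i-\kk_j}\,m_{ij},
\eeq
where $c_i=c_i(\qq)$ is a scalar, so that $\nabla_{\bla,\qq,\kp,i}=\nabla_{\qq,\kp,i}-c_i$.

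Next I would compute the curvature. Because $c_j$ is multiplication by a function of $\qq$, it commutes with the module action, so $[\Xk_i,c_j]=0$, while $[\kp\ddk_i,c_j]=\kp\,(\ddk_i c_j)$. Expanding $[\nabla_{\bla,\qq,\kp,i},\nabla_{\bla,\qq,\kp,j}]$ and using the flatness of $\nabla_{\qq,\kp}$ for the leading term, all operator-valued contributions cancel and one is left with $\kp\,(\ddk_j c_i-\ddk_i c_j)$; thus flatness is equivalent to the closedness identity $\ddk_j c_i=\ddk_i c_j$. To verify it I would exhibit the potential $\Psi=-h\sum_{a<b}m_{ab}\log(\kk_a-\kk_b)$ and check by a direct computation that $c_i=\ddk_i\Psi+d_i$ with $d_i=h\sum_{j>i}m_{ij}$ constant; the only nonobvious manipulation is replacing $\kk_j/(\kk_i-\kk_j)$ by $\kk_i/(\kk_i-\kk_j)$ at the cost of the constant $d_i$. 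Since the Euler operators $\ddk_i$, $\ddk_j$ commute and annihilate constants, $\ddk_j c_i=\ddk_j\ddk_i\Psi=\ddk_i\ddk_j\Psi=\ddk_i c_j$, which proves flatness. Equivalently, the scalar gauge transformation by $g=\exp(\Psi/\kp)\prod_i\kk_i^{d_i/\kp}$ carries $\nabla_{\qq,\kp}|_{H^*_T(\tfl)}$ to $\nabla_{\bla,\qq,\kp}$, and gauge equivalence preserves flatness.

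The main obstacle is conceptual rather than computational: one must notice that the passage from $G_{\ij}$ to $G_{\bla,\ij}$ amounts, on the relevant weight space, to a Cartan shift by a scalar, which converts the a priori noncommutative curvature of $\nabla_{\bla,\qq,\kp}$ into the abelian closedness condition $\ddk_j c_i=\ddk_i c_j$. Once this is recognized, the remaining work—the weight-space fact that $e_{kk}$ acts by $\la_k$ on $H^*_T(\tfl)$, and the potential computation—is routine.
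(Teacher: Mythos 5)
Your proof is correct and is essentially the paper's own argument: the paper proves the lemma by exhibiting the gauge transformation $\Ups_\bla=\prod_{i<j}(1-\kk_j/\kk_i)^{h\>\eps_{\bla,\ij}/\kp}$, where $\eps_{\bla,\ij}=G_{\bla,\ij}-G_{\ij}$ is the Cartan generator $e_{\jj}$ or $e_{\ii}$, and conjugation by $\Ups_\bla$ carries the flat connection $\nabla_{\qq,\kp}$ to $\nabla_{\bla,\qq,\kp}$; your potential $\Psi$ together with the constants $d_i$ is exactly $-\kp\log\Ups_\bla$ specialized to the weight-$\bla$ subspace, where $\eps_{\bla,\ij}$ becomes the scalar $\min(\la_i,\la_j)$. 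The only (harmless) difference is that the paper's operator-valued gauge factor yields flatness on any $\ty$-module at once, whereas your scalar version as written establishes it on the weight subspace $H^*_T(\tfl)$, which is all that is needed later in the paper.
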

\begin{proof}
The connection \;\;$\nabla_{\<\bla,\qq,\kp}\<$ is gauge
equivalent to connection $\nabla_\kp$,
\vvn.4>
\bean
\label{gauge}
\nabla_{\<\bla\<\>,\qq,\kp,\<\>i}=\,(\Ups_\bla)^{-1}\;\nabla_{\qq,\kp,i}\;\Ups_\bla\,,
\qquad \Ups_\bla\>=\prod_{1\le i<j\le N}\!(1-\kk_j/\kk_i)
^{\<\>h\>\eps_{\bla\<\>,\ij}/\<\kp}\,,
\vv.2>
\eean
where \;$\eps_{\bla\<\>,\ij}=\>e_{\jj}$ \,for \,$\la_i\ge\la_j$\>,
\,and \;$\eps_{\bla\<\>,\ij}=\>e_{\ii}$ \,for \,$\la_i<\la_j$\>.
\end{proof}

Connection \Ref{dyneq} was introduced in \cite{TV4}, see also Appendix B
in \cite{MTV1}, and is called the {\it trigonometric dynamical connection\/}.
Later the definition was extended from $\frak{sl}_N$ to other simple Lie
algebras in \cite{TL} under the name of the trigonometric Casimir connection.

The trigonometric dynamical connection is
defined over \;$\C^N$ with coordinates \,$\kkk$, it has singularities
at the union of the diagonals \,$\kk_i=\kk_j$.

\subsection{Dynamical Hamiltonians $X^q_{\bla,i}$ on $H^*_T(T^*\F_\bla)$} Recall the $\ty$-module structure
$\rho$ defined on $H^*_T(\XX_n) = \oplus_{|\bla|=n}H^*_T(T^*\F_\bla)$ in Section \ref{sec cohom and Yang}.
For any $\bs\mu=(\mu_1,\dots,\mu_N)\in \Z^N_{\geq 0}$, $|\bs\mu|=n$, the
action of the dynamical Hamiltonians $X^q_{\bs\mu,i}$ preserve each of $H^*_T(T^*\F_\bla)$.

\begin{lem}
[Lemma 7.6 in \cite{RTV}]
For any $\bla$ and $i=1,\dots,n$, the restriction of $\rho(X^q_{\bla,i})$ to
$H^*_T(T^*\F_\bla)$ has the form:
\bean
\label{dyn bla}
&&
\phantom{aaa}
\\
\rho(X^{q}_{\bla,i})
&=&
(\gamma_{i,1}+\dots+\ga_{i,\la_i})
-\>h\>\sum_{j=1}^{i-1}\,\frac{\kk_i}{\kk_i-\kk_j}\,\rho(G_{\bla\<\>,\<\>\ij})
\>-\>h\!\sum_{j=i+1}^n\frac{\kk_j}{\kk_i-\kk_j}\,\rho(G_{\bla\<\>,\<\>\ij}) =
\notag
\\
&=& (\gamma_{i,1}+\dots+\ga_{i,\la_i})
-\>h\>\sum_{j=1}^{i-1}\,\frac{\kk_i}{\kk_i-\kk_j}\,\rho(e_{j,i}e_{i,j})
\>-\>h\!\sum_{j=i+1}^n\frac{\kk_j}{\kk_i-\kk_j}\,\rho(e_{i,j}e_{j,i})\, + C,
\notag
\eean
where $(\gamma_{i,1}+\dots+\ga_{i,\la_i})$ denotes the operator of multiplication by the cohomology class
$\gamma_{i,1}+\dots+\ga_{i,\la_i}$, the operator
$C$ is a scalar operator on $H^*_T(T^*\F_\bla)$, and for any $i\ne j$ the element
$\rho(G_{\bla\<\>,\<\>\ij})$ annihilates the identity element $1_\bla\in H^*_T(T^*\F_\bla)$.

\end{lem}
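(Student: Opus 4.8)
The plan is to read the two displayed equalities directly off the definition of $X^q_{\bla,i}$ and the known $\rho$-action, and to deduce the vanishing on $1_\bla$ from the fact that $1_\bla$ is an extremal weight vector. The first equality is just the definition of $X^q_{\bla,i}$ seen through $\rho$: restricting to $H^*_T(T^*\F_\bla)$ and applying \Ref{rhoX inf}, which identifies $\rho(\Xin_i)$ with multiplication by the class $\ga_{i,1}+\dots+\ga_{i,\la_i}$, converts the leading term into the stated multiplication operator while the two sums over $\rho(G_{\bla\<\>,\<\>\ij})$ are carried over unchanged. For the second equality I would rewrite each $G_{\bla\<\>,\<\>\ij}$ in the uniform order ($e_{\ji}e_{\ij}$ in the first sum, $e_{\ij}e_{\ji}$ in the second) by means of $[e_{\ij},e_{\ji}]=e_{\ii}-e_{\jj}$, using that on the weight space $H^*_T(T^*\F_\bla)$ the element $e_{\ii}-e_{\jj}$ acts by the scalar $\la_i-\la_j$. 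Since $G_{\bla\<\>,\<\>\ij}$ equals $e_{\ji}e_{\ij}$ for $\la_i\ge\la_j$ and $e_{\ij}e_{\ji}$ for $\la_i<\la_j$, each reordering costs only the scalar $\pm(\la_i-\la_j)$; collecting these with the coefficients $-h\,\kk_i/(\kk_i-\kk_j)$ and $-h\,\kk_j/(\kk_i-\kk_j)$ yields the scalar operator $C$.

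For the annihilation statement I would prove the sharper claim that $\rho(e_{ab})\,1_\bla=0$ whenever $\la_a\ge\la_b$. This suffices: in each $G_{\bla\<\>,\<\>\ij}$ the rightmost factor is $e_{\ij}$ (when $\la_i\ge\la_j$) or $e_{\ji}$ (when $\la_i<\la_j$), and in both cases the first index of that factor carries the larger weight, so the factor already kills $1_\bla$ and hence so does the product. For adjacent pairs the claim is a direct computation: setting $f=1$ in \Ref{rho E} and \Ref{rho F} and extracting the coefficient of $u^{-1}$ expresses $\rho(e_{p,p+1})\,1_\bla$ and $\rho(e_{p+1,p})\,1_\bla$ as divided-difference sums $\sum_i P(\ga_{p,i})\big/\prod_{j\ne i}(\ga_{p,i}-\ga_{p,j})$ over the roots of $\Ga_p$ (resp.\ $\Ga_{p+1}$), in which $P$ has degree below the number of nodes minus one exactly when $\la_p\ge\la_{p+1}$ (resp.\ $\la_{p+1}\ge\la_p$); such a sum vanishes identically. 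Thus $1_\bla$ satisfies the extremal condition at every simple root.

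To promote this from simple roots to all pairs with $\la_a\ge\la_b$, I would argue that $1_\bla$ is the weight-$\bla$ extremal vector of the $\gln$-submodule it generates, namely the irreducible module whose highest weight is the dominant rearrangement $\bla^+$ of $\bla$; the weight-$\bla$ extremal vector of such a module is annihilated exactly by the $e_\alpha$ with $\langle\bla,\alpha^\vee\rangle\ge0$, that is, by all $e_{ab}$ with $\la_a\ge\la_b$. The main obstacle is precisely this passage. The adjacent vanishing does not by itself give annihilation by a non-adjacent $e_{ab}$: writing $e_{ab}=[e_{a,c},e_{c,b}]$ produces intermediate vectors $\rho(e_{c,b})\,1_\bla$ that cease to be identity classes once the weight gap exceeds one, so the adjacent result cannot simply be reapplied, and for non-dominant $\bla$ the Borel adapted to $\bla$ has non-standard simple roots, which blocks the usual ``killed by the simple root vectors implies killed by the whole nilradical'' bootstrap. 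I would close this gap by establishing that $U(\gln)\>1_\bla$ is irreducible --- most cleanly by invoking the known $\gln$-module structure of $H^*_T(\XX_n)$ and locating $1_\bla$ in a single irreducible constituent, or, in the language of \cite{MO}, by using that $1_\bla$ is the stable-envelope class of an extreme fixed point and is therefore a highest weight vector. Once the extremal property is in hand, all three assertions of the lemma follow as above.
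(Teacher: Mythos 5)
The paper states this lemma without proof, citing [RTV], so there is nothing internal to compare against; I judge your argument on its own terms. Your treatment of the two displayed equalities is correct and complete: the first is the definition of $X^q_{\bla,i}$ read through \Ref{rhoX inf}, and the second costs only the scalars produced by $[e_{i,j},e_{j,i}]=e_{i,i}-e_{j,j}$, which acts by $\la_i-\la_j$ on the weight-$\bla$ summand. Your reduction of the annihilation statement to the claim that $\rho(e_{a,b})\,1_\bla=0$ whenever $\la_a\ge\la_b$, and the verification of that claim for adjacent $(a,b)$ via the Lagrange-interpolation identity applied to the $u^{-1}$-coefficients of \Ref{rho E} and \Ref{rho F}, are also correct. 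The genuine gap is exactly where you flag it: the passage to non-adjacent pairs, and neither of your proposed closures works. The assertion that $1_\bla$ is the stable-envelope class of an extreme fixed point is false: already for $n=N=2$, $\bla=(1,1)$, formula \Ref{StW} gives $\St_{\on{id},I}=\ga_{1,1}-z_2$ and $\St_{\on{id},J}=\ga_{1,1}-z_1-h$, and $1_\bla$ is a combination of these with rational-function coefficients, not a single envelope. As for the other route, $H^*_T(\XX_n)\ox\C(\zz;h)\cong(\C^N)^{\ox n}\ox\C(\zz;h)$ is far from multiplicity-free, so ``locating $1_\bla$ in a single irreducible constituent'' is precisely the unproven assertion, not a known fact you may invoke.

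The gap can be closed inside your own framework by a cohomological-degree count, with no representation theory. The $u^{-1}$-coefficients of \Ref{rho E} and \Ref{rho F} are homogeneous operators: on the summand of weight $\bla$, $\rho(e_{p,p+1})$ shifts degree by $\la_{p+1}-\la_p-1$ and $\rho(e_{p+1,p})$ by $\la_p-\la_{p+1}-1$, i.e.\ in both cases $\rho(e_{a,b})$ is homogeneous of degree $\la_b-\la_a-1$. Writing a general $e_{a,b}$ as an iterated commutator of adjacent generators, the degree shifts of each monomial telescope along the path of intermediate weights, so every monomial, hence $\rho(e_{a,b})$ itself, is homogeneous of degree $\la_b-\la_a-1$ on the weight-$\bla$ summand. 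Applied to $1_\bla$, which sits in degree zero of the non-negatively graded ring $H^*_T(T^*\F_\bla)$, the image lies in negative degree and therefore vanishes whenever $\la_a\ge\la_b$. This proves your sharper claim for all pairs, and since the rightmost factor of $G_{\bla,i,j}$ always satisfies this inequality, the annihilation statement follows.
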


\subsection{Quantum multiplication by divisors on $H^*_T(T^*\F_\bla)$} In \cite{MO}, the quantum multiplication
by divisors on $H^*_T(T^*\F_\bla)$ is described. The fundamental equivariant cohomology classes of divisors
on $T^*\F_\bla$ are linear combinations of $D_{i}=\gamma_{i,1}+\dots+\ga_{i,\la_i}$, $i=1,\dots,N$.
The quantum multiplication $D_{i}*_{\tilde\qq}$ depends on parameters $\tilde\qq=(\tilde q_1,\dots,\tilde q_N)\in(\C^\times)^N$.

\begin{thm}[Theorem 10.2.1 in \cite{MO}]
\label{MO main} For $i=1,\dots,N$, the quantum multiplication by $D_i$ is given by the formula:
\bean
\label{q mult}
&&
\phantom{aaa}
\\
D_i*_{\tilde\qq}
&=& (\gamma_{i,1}+\dots+\ga_{i,\la_i})
+\>h\>\sum_{j=1}^{i-1}\,\frac{\tilde q_j/\tilde q_i}{1-\tilde q_j/\tilde q_i}\,\rho(e_{j,i}e_{i,j})
\>-\>h\!\sum_{j=i+1}^n\frac{\tilde q_i/\tilde q_j}{1-\tilde q_i/\tilde q_J}\,\rho(e_{i,j}e_{j,i})\, + C,
\notag
\eean
where $C$ is a scalar operator on $H^*_T(T^*\F_\bla)$ fixed by the requirement that
the purely quantum
part of $D_i*_{\tilde\qq}$ annihilates the identity $1_\bla$.

\end{thm}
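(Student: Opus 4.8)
This is the Maulik--Okounkov computation of quantum multiplication by divisors, and my plan is to start from the genus-zero Gromov--Witten (equivalently, quasimap) definition of the quantum product and to isolate its purely quantum part. By the divisor axiom the genus-zero three-point invariant carrying a divisor insertion $D_i$ factors as $(D_i,\beta)$ times the corresponding two-point invariant, so $D_i*_{\tilde\qq}$ equals classical cup product by $\gamma_{i,1}+\dots+\ga_{i,\la_i}$ plus a purely quantum correction $\sum_{\beta>0}(D_i,\beta)\,\tilde\qq^{\,\beta}\,M_\beta$ summing over effective curve classes $\beta\in H_2(\tfl;\Z)$, with $M_\beta$ the two-point operator in class $\beta$. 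First I would determine which $\beta$ contribute: in $\tfl$ every effective class is represented in the zero section $\Fla$, and the primitive ones are the lines joining two coordinate flags that differ by moving a single basis vector between the $i$-th and $j$-th steps; for the class attached to such a pair one has $\tilde\qq^{\,\beta}=\tilde q_i/\tilde q_j$, and the pairing $(D_i,\beta)$ supplies both the restriction to the two sums $j<i$ and $j>i$ and their opposite signs.

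The heart of the argument is to evaluate each wall contribution $M_\beta$ and to recognize it as a Yangian operator. I would apply $T$-equivariant virtual localization together with a degeneration to the normal cone of the relevant curve, reducing the computation for the pair $(i,j)$ to the rank-one Nakajima variety $T^*\Pone$. There the moduli of stable maps is explicit and the only surviving correspondence is the Hecke-type correspondence shifting one box between the $i$-th and $j$-th subspaces; under the $\ty$-module structure on $H^*_T(\XX_n)$ this correspondence is precisely $\rho(e_{i,j}e_{j,i})$, respectively $\rho(e_{j,i}e_{i,j})$, which is the geometric content behind Theorem \ref{yang str the same}. The equivariant weight $-h$ of the cotangent fiber enters each contribution as the overall factor $h$ in \Ref{q mult}.

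It then remains to assemble the generating series. A primitive class of type $(i,j)$ together with its $d$-fold covers contributes $\sum_{d\ge1}(\tilde q_i/\tilde q_j)^{d}$ times one and the same operator, and resumming this geometric series yields the rational coefficients $\tfrac{\tilde q_i/\tilde q_j}{1-\tilde q_i/\tilde q_j}$ and $\tfrac{\tilde q_j/\tilde q_i}{1-\tilde q_j/\tilde q_i}$ of \Ref{q mult}. Finally the scalar $C$ is fixed, as stated, by requiring the purely quantum part to annihilate $1_\bla$; this removes the ambiguity coming from the (equivariant) normalization of the two-point functions in the fiber direction.

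The main obstacle is the second step. The rank-one reduction and the identification of the localized virtual contribution with the algebraic operator $\rho(e_{i,j}e_{j,i})$ --- in particular the vanishing of all higher correspondences --- is exactly where the stable-envelope and geometric $R$-matrix machinery of \cite{MO} is needed, and it is this step that forces the Yangian structure onto the cohomology. One must also verify that the $d$-fold cover contributions depend on $d$ only through the power $\tilde\qq^{\,d\beta}$, so that the geometric series resums to the displayed rational functions; controlling the virtual classes uniformly in the covering degree is the delicate point.
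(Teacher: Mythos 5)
The first thing to note is that the paper does not prove this statement at all: it is quoted verbatim as Theorem 10.2.1 of \cite{MO}, and the only ingredient the present paper adds around it is the separate identification (Theorem \ref{yang str the same}, Corollary \ref{cor qm = dh}) of the operators $\rho(e_{\ij}\>e_{\ji})$ appearing in \Ref{q mult} with the $\ty$-action of Section \ref{sec cohom and Yang}. So your proposal can only be measured against the argument in \cite{MO}, not against anything in this text.

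As an outline of that argument your sketch has the right shape: divisor equation, reduction of the purely quantum part to a sum over classes indexed by pairs $(i,j)$, a rank-one (i.e.\ $T^*\Pone$-type) computation for each such wall, identification of the resulting correspondence with a Yangian root operator, and resummation of multiple covers into $\tilde q^{\,\beta}/(1-\tilde q^{\,\beta})$. But as a proof it is incomplete at exactly the point you flag: the assertion that the two-point operator $M_\beta$ for a primitive class equals $h\,\rho(e_{\ij}\>e_{\ji})$, and that all other effective classes contribute only through multiple covers with an operator independent of the covering degree, is essentially the entire content of the theorem. Establishing it requires (i) the reduced virtual class and unbroken-curve analysis that kills the broken contributions, (ii) the stable envelope and $R$-matrix formalism that identifies the wall contribution with the off-diagonal part of the classical $r$-matrix for the corresponding root $\mathfrak{gl}_2$-subalgebra, and (iii) care with the fact that $\tfl$ is noncompact, so the Gromov--Witten invariants are only defined equivariantly after localization in $h$ --- which is also why the undetermined scalar $C$ appears and must be fixed by the normalization on $1_\bla$ rather than computed directly. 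None of these steps is supplied, so what you have is a correct roadmap of the proof in \cite{MO} rather than a proof; within the logic of the present paper the statement should simply be treated as an imported result.
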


\begin{cor} [Corollary 7.8 in \cite{RTV}]
\label{cor qm = dh}
For $i=1,\dots,N$, the operator $D_i*_{\tilde\qq}$
of quantum multiplication by $D_i$ on $H^*_T(T^*\F_\bla)$
equals the action $\rho(X^{q}_{\bla,i})$ on $H^*_T(T^*\F_\bla)$
of the dynamical Hamiltonian $X^{q}_{\bla,i}$ if we put
$(q_1,\dots,q_N)$ $ = (\tilde q_1^{\,-1},\dots, \tilde q_N^{\,-1})$.

\end{cor}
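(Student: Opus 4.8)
The plan is to prove the corollary by directly comparing the two explicit formulas already at our disposal: formula \Ref{dyn bla} for the dynamical Hamiltonian $\rho(X^q_{\bla,i})$ from the preceding Lemma, and formula \Ref{q mult} for the quantum multiplication $D_i*_{\tilde\qq}$ from Theorem \ref{MO main}. Both operators on $H^*_T(T^*\F_\bla)$ have the same three-part shape: the classical multiplication by the divisor class $D_i=\ga_{i,1}+\dots+\ga_{i,\la_i}$, a sum over $j<i$ of the operators $\rho(e_{j,i}e_{i,j})$ and a sum over $j>i$ of the operators $\rho(e_{i,j}e_{j,i})$ with rational coefficients, and a scalar operator $C$. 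The classical piece and the operators $\rho(e_{j,i}e_{i,j})$, $\rho(e_{i,j}e_{j,i})$ occur verbatim on both sides, so the corollary reduces to two checks: (i) the rational coefficients coincide after the substitution $q_i=\tilde q_i^{\,-1}$, and (ii) the two scalars $C$ are equal.

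For step (i) I would use that the dynamical variables $\kk_i=q_i$ appearing in \Ref{dyn bla} are the quantum parameters, and substitute $q_i=\tilde q_i^{\,-1}$. A one-line manipulation gives
\be
\frac{q_i}{q_i-q_j}\,=\,\frac{\tilde q_j}{\tilde q_j-\tilde q_i}\,,\qquad
\frac{q_j}{q_i-q_j}\,=\,\frac{\tilde q_i}{\tilde q_j-\tilde q_i}\,.
\ee
Hence the coefficient $-h\,\frac{q_i}{q_i-q_j}$ of $\rho(e_{j,i}e_{i,j})$ (for $j<i$) becomes $h\,\frac{\tilde q_j}{\tilde q_i-\tilde q_j}=h\,\frac{\tilde q_j/\tilde q_i}{1-\tilde q_j/\tilde q_i}$ after the sign flip of the denominator, while the coefficient $-h\,\frac{q_j}{q_i-q_j}$ of $\rho(e_{i,j}e_{j,i})$ (for $j>i$) becomes $-h\,\frac{\tilde q_i/\tilde q_j}{1-\tilde q_i/\tilde q_j}$. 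These are exactly the coefficients in \Ref{q mult}, so the non-scalar parts of the two operators agree termwise.

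For step (ii) I would invoke the normalizations of $C$ in the two statements, and this is the only point that needs genuine care. In Theorem \ref{MO main} the scalar $C$ is, by definition, the unique scalar operator making the purely quantum part of $D_i*_{\tilde\qq}$ annihilate the identity $1_\bla$. On the dynamical side the preceding Lemma asserts that each $\rho(G_{\bla,ij})$ annihilates $1_\bla$; applying the first form of \Ref{dyn bla} to $1_\bla$ therefore kills all the quantum summands, which forces the purely quantum part of $\rho(X^q_{\bla,i})$ written in the form involving $e_{j,i}e_{i,j}$ and $e_{i,j}e_{j,i}$ — its two sums together with $C$ — to annihilate $1_\bla$ as well. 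Thus $C$ in \Ref{dyn bla} is pinned down by exactly the same requirement as in \Ref{q mult}, and since by step (i) the accompanying quantum sums are now identical, the two scalars must coincide. The main obstacle is precisely to see that no independent evaluation of $C$ is needed: the scalar arising in \Ref{dyn bla} from rewriting $\rho(G_{\bla,ij})$ in the uniform $ee$-form is the same as the one normalizing \Ref{q mult}, a fact guaranteed solely by the Lemma's assertion $\rho(G_{\bla,ij})\,1_\bla=0$. With (i) and (ii) established, $\rho(X^q_{\bla,i})=D_i*_{\tilde\qq}$ termwise under $q_i=\tilde q_i^{\,-1}$, which is the claim.
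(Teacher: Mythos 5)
Your proof is correct and is exactly the argument the paper intends: the corollary is stated immediately after Lemma 7.6 (formula \Ref{dyn bla}) and Theorem \ref{MO main} (formula \Ref{q mult}) precisely so that the identification reduces to the termwise comparison you carry out, with the coefficient identity $\frac{q_i}{q_i-q_j}=\frac{\tilde q_j}{\tilde q_j-\tilde q_i}$ under $q_i=\tilde q_i^{-1}$ and the matching of the scalars $C$ via the fact that $\rho(G_{\bla,\ij})$ annihilates $1_\bla$. Your treatment of the only delicate point --- that both scalars are pinned down by the same normalization on $1_\bla$ --- is the right one.
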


The quantum connection $\nabla_{\on{quant},\bla,\tilde\qq,\kp}$ on $H^*_T(T^*\F_\bla)$ is defined by the formula
\bean
\label{q conn}
\nabla_{\on{quant},\bla,\tilde \qq,\kp, i}\,=\,\kp\,\tilde q_i\frac{\der}{\der \tilde q_i}\>-\>D_i*_{\tilde\qq}\,,\qquad i=1\lc N,
\vv-.1>
\eean
where $\kp\in\C^\times$ is a parameter of the connection, see \cite{BMO}.
By Corollary \ref{cor qm = dh}, we have
\bean
\label{quanT}
\nabla_{\on{quant},\bla,\tilde \qq,\kp, i} = \rho(\nabla_{\bla,\tilde q_1^{\,-1},\dots, \tilde q_N^{\,-1},-\kp}),
\qquad i=1,\dots,N.
\eean

\subsection{Dynamical Hamiltonians on $(\C^N)^{\otimes n}\otimes \C[\zz;h]$}

Recall that $e_{\ij}$, $i,j=1\lc N$, denote standard generators of
$\gln$.
A vector $v$ of a $\gln$-module $M$ has weight
$\bla=(\la_1\lc\la_N)\in\C^N$ if $e_{\ii}\>v=\la_i\>v$ for $i=1\lc N$.
We denote by $M_\bla\subset M$ the weight subspace of weight $\bla$.

\vsk.2>
We consider $\C^N$ as the standard vector representation of $\gln$ with basis
$v_1\lc v_N$ such that $e_{\ij}v_k=\dl_{\jk}v_i$ for all $i,j,k$.
Denote $V=(\C^N)^{\ox n}$. For $I=(I_1,\dots,I_N)\in\mc I_n$, we define
$v_I\in V$ by the formula
$v_I = v_{i_1}\otimes\dots\otimes v_{i_n},$ where $i_j =i$ if $ i_j\in I_i$.
Let
\bea
V\,=\!\bigoplus_{|\bla|=n}\!V_\bla
\eea
be the weight decomposition. The vectors $(v_I)_{I\in\Il}$ form a basis of
$V_\bla$.

As always, we denote by $e^{(a)}_{i,j}$ the action of $e_{i,j}$ on the $a$-th tensor factor of $V$ and denote
$e_{i,j} = \sum_{a=1}^n e^{(a)}_{i,j}$.

Recall the $\ty$-module structure $\pho$ on $V\otimes \C[\zz;h]$ and the dynamical Hamiltonians $X^q_i$ introduced in
Section \ref{sec dyn hams}. The dynamical Hamiltonians preserve each of the weight subspaces
$V_\bla\otimes \C[\zz;h]$.

\begin{lem} [Lemma 4.17 in \cite{GRTV}]
\label{lem action on V}
For $i=1,\dots,n$, we have
\bean
\label{phoX8}
\phantom{aaaa}
\pho(X^q_i)
\, =\,\sum_{a=1}^n z_a\>e^{(a)}_{\ii}+\>
\frac h2\>(e_{\ii}^2-e_{\ii})-\>
h\>\sum_{j=1}^N\,\sum_{1\leq a<b\leq n}\!e^{(a)}_{\ij}\>e^{(b)}_{\ji}
-\>h\>\sum_{\satop{j=1}{j\ne i}}^N\,\frac{\kk_j}{\kk_i-\kk_j}\,G_{\ij}\,.
\eean
\end{lem}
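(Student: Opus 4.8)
The plan is to apply the algebra homomorphism $\pho$ term by term to the defining expression $\Xk_i=-h\,T_{\ii}\+2+\frac h2\,e_{\ii}(e_{\ii}-1)-h\sum_{j\ne i}\frac{\kk_j}{\kk_i-\kk_j}\,G_{\ij}$ of Section~\ref{sec dyn hams}. Three of the pieces, namely $\frac h2\,e_{\ii}(e_{\ii}-1)$ and the sum $-h\sum_{j\ne i}\frac{\kk_j}{\kk_i-\kk_j}\,G_{\ij}$, lie in the subalgebra $\Ugln\subset\ty$, where, as noted right after the Proposition in Section~\ref{sec yang act C^N}, $\pho$ restricts to the standard action $e_{\ij}\mapsto\sum_{a=1}^n e^{(a)}_{\ij}$. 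These therefore reproduce the terms $\frac h2(e_{\ii}^2-e_{\ii})$ and $-h\sum_{j\ne i}\frac{\kk_j}{\kk_i-\kk_j}\,G_{\ij}$ of the claimed formula verbatim. Hence the entire content of the lemma is the computation of $\pho(T_{\ii}\+2)$, which should produce exactly the first and third terms $\sum_a z_a e^{(a)}_{\ii}$ and $-h\sum_j\sum_{a<b}e^{(a)}_{\ij}e^{(b)}_{\ji}$ after multiplication by $-h$.

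To compute $\pho(T_{\ii}\+2)$ I would expand both sides of the defining relation \Ref{pho} as power series in $u^{-1}$ and match coefficients. On the Yangian side, substituting $v=-u/h$ into $T_{\ij}(v)=\dl_{\ij}+\sum_{s\ge1}T_{\ij}\+s v^{-s}$ and using $(-u/h)^{-s}=(-h)^s u^{-s}$ gives $\pho(T_{\ij}(-u/h))=\dl_{\ij}+\sum_{s\ge1}(-h)^s\pho(T_{\ij}\+s)\,u^{-s}$, so $\pho(T_{\ii}\+2)$ is $h^{-2}$ times the $u^{-2}$-coefficient of the right-hand side of \Ref{pho}. On the geometric side I would write each factor of $L(u)$ as $u-Q_a$ with $Q_a=z_a+h\,P^{(0,a)}$, expand the ordered product $L(u)=\prod_{a=n}^1(u-Q_a)=u^n-u^{n-1}S_1+u^{n-2}S_2-\dots$ with $S_1=\sum_a Q_a$ and $S_2=\sum_{n\ge a>b\ge1}Q_aQ_b$ (the larger index to the left), and expand $\prod_a(u-z_a)^{-1}=u^{-n}\sum_{k\ge0}c_k\,u^{-k}$, where $c_k$ is the complete homogeneous symmetric function of degree $k$ in $\zz$. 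Multiplying the two series, the $u^{-2}$-coefficient of $L_{\ij}(u)\prod_a(u-z_a)^{-1}$ equals the $(\ij)$ auxiliary-space entry of $c_2-S_1c_1+S_2$.

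The remaining work is bookkeeping. Writing $P^{(0,a)}=\sum_{k,l}E^{(0)}_{kl}\,e^{(a)}_{lk}$, the $(\ii)$ auxiliary entry of $P^{(0,a)}$ is $e^{(a)}_{\ii}$ and that of $P^{(0,a)}P^{(0,b)}$ is $\sum_l e^{(a)}_{li}e^{(b)}_{il}$. Substituting $S_1=c_1+h\sum_a P^{(0,a)}$ with $c_1=\sum_a z_a$, I expect the purely scalar contributions $c_2-c_1^2+\sum_{a>b}z_az_b$ to cancel, the terms linear in $P$ to collapse to $-h\sum_a z_a P^{(0,a)}$, and the quadratic terms to give $h^2\sum_{a>b}P^{(0,a)}P^{(0,b)}$; taking the $(\ii)$ entry and dividing by $h^2$ then yields $\pho(T_{\ii}\+2)=-h^{-1}\sum_a z_a e^{(a)}_{\ii}+\sum_{a>b}\sum_l e^{(a)}_{li}e^{(b)}_{il}$. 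Finally, since operators on distinct tensor factors commute, relabeling $a\leftrightarrow b$ turns $\sum_{a>b}\sum_l e^{(a)}_{li}e^{(b)}_{il}$ into $\sum_{a<b}\sum_j e^{(a)}_{\ij}e^{(b)}_{\ji}$, and multiplying $\pho(T_{\ii}\+2)$ by $-h$ produces precisely the two terms still needed. The main obstacle is purely organizational: keeping the noncommutative ordered product and the auxiliary-space matrix-entry extraction straight, and verifying the scalar cancellation; there is no conceptual difficulty once the $(-h)^s$ grading convention of \Ref{pho} is tracked correctly.
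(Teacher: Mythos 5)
Your proposal is correct, and every step I checked goes through: the coefficient extraction from $\pho\bigl(T_{\ij}(-u/h)\bigr)=\dl_{\ij}+\sum_{s\ge1}(-h)^s\pho(T_{\ij}\+s)u^{-s}$, the expansion $c_2-S_1c_1+S_2$ for the $u^{-2}$ term, the scalar cancellation $h_2-h_1^2+e_2=0$ (with $h_k$ complete homogeneous and $e_k$ elementary symmetric in $\zz$), the collapse of the linear terms to $-h\sum_a z_a P^{(0,a)}$, and the identification of the $(i,i)$ auxiliary entry of $\sum_{a>b}P^{(0,a)}P^{(0,b)}$ with $\sum_{a<b}\sum_j e^{(a)}_{\ij}e^{(b)}_{\ji}$ after relabeling. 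Note, however, that the paper itself does not prove this lemma: it simply cites Lemma 4.17 of [GRTV] and remarks that one replaces $h$ with $-h$ there. So you have supplied the computation that the paper delegates to the reference; your term-by-term strategy (using that $\pho$ restricts to the standard $\gln$-action on the $U(\gln)$ part, so that only $\pho(T_{\ii}\+2)$ requires work) is exactly the natural one, and the only point worth making explicit in a written-up version is that the diagonal entry is insensitive to the choice of matrix-entry convention for $L_{\ij}(u)$, so no sign or transpose ambiguity can enter.
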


To obtain the lemma we replace $h$ with $-h$ in Lemma 4.17 of \cite{GRTV}.

\subsection{\qKZ/ difference connection}
\label{sec qkz}
Recall the $\ty$-action $\pho$ on $(\C^N)^{\otimes n}\otimes\C[\zz;h]$ introduced in
Section \ref{sec yang act C^N}. \,Let
\vvn.1>
\be
R^{(\ij)}(u)\,=\,\frac{u-h\<\>P^{(\ij)}}{u-h}\;,\qquad
i,j=1\lc n\,,\quad i\ne j\,.\kern-3em
\vv.3>
\ee
For $\kp\in\C^\times$, define operators
\,$\Ko_1\lc\Ko_n\in\End( (\C^N)^{\otimes n})\ox\C[\zz;h]$\>,
\vvn.2>
\begin{align*}
\Ko_i(\zz;\qq;h;\kp)\>&{}=\,
R^{(i+1,i)}(z_{i+1}\<-z_i)\,\dots\,R^{(n,i)}(z_n\<-z_i)\,\times{}
\\[2pt]
& {}\>\times\<\;q_1^{-e_{1,1}^{(i)}}\!\dots\,q_N^{-e_{N,N}^{(i)}}\,
R^{(1,i)}(z_1\<-z_i\<-\kp\<\>)\,\dots\,R^{(i-1,i)}(z_{i-1}\<-z_i\<-\kp\<\>)\,.
\end{align*}
Consider the difference operators \,$\Kh_{\kp,1}\lc\Kh_{\kp,n}$
\vvn.2>
acting on \,$ (\C^N)^{\otimes n}\<$-valued functions of \>$\zz,\qq, h$\>,
\beq
\label{K_i}
\Kh_{\zz,\qq,h,\kp,i}\>F(\zz;\qq;h)\,=\,
K_i(\zz;\qq;h;\kp)\,F(z_1,\dots,z_{i-1},z_i+\kp,z_{i+1},\dots,z_n;\qq;h)\,.
\eeq

\begin{thm}[\cite{FR}]
The operators \;$\Kh_{\zz,\qq,h,\kp,1}\lc\Kh_{\zz,\qq,h,\kp,n}$ pairwise commute.
\end{thm}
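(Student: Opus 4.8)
The plan is to establish the operator identity
\[
K_i(\zz;\qq;h;\kp)\,K_j(z_1,\dots,z_i+\kp,\dots,z_n;\qq;h;\kp)\,=\,K_j(\zz;\qq;h;\kp)\,K_i(z_1,\dots,z_j+\kp,\dots,z_n;\qq;h;\kp)
\]
for all $i<j$. This is equivalent to $\Kh_{\zz,\qq,h,\kp,i}\,\Kh_{\zz,\qq,h,\kp,j}=\Kh_{\zz,\qq,h,\kp,j}\,\Kh_{\zz,\qq,h,\kp,i}$: expanding both compositions by \Ref{K_i}, one sees that each side applies the \emph{same} total shift $z_i\mapsto z_i+\kp$, $z_j\mapsto z_j+\kp$ to $F$, so that commutativity of the difference operators is exactly the above equality of their matrix coefficients. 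The whole argument rests on three elementary properties of $R^{(a,b)}(u)=(u-h\,P^{(a,b)})/(u-h)$: the Yang--Baxter equation $R^{(a,b)}(z_a-z_b)\,R^{(a,c)}(z_a-z_c)\,R^{(b,c)}(z_b-z_c)=R^{(b,c)}(z_b-z_c)\,R^{(a,c)}(z_a-z_c)\,R^{(a,b)}(z_a-z_b)$, inherited from the rational Yang $R$-matrix; unitarity $R^{(a,b)}(u)\,R^{(b,a)}(-u)=1$, immediate from $(P^{(a,b)})^2=1$ and $(u-h\,P)(u+h\,P)=u^2-h^2$; and weight invariance: because $P^{(a,b)}$ commutes with $e_{k,k}^{(a)}+e_{k,k}^{(b)}$, the twist $q_k^{-e_{k,k}^{(a)}}q_k^{-e_{k,k}^{(b)}}$ commutes with $R^{(a,b)}(u)$, while a one-factor twist $D_c=q_1^{-e_{1,1}^{(c)}}\!\dots q_N^{-e_{N,N}^{(c)}}$ with $c\notin\{a,b\}$ commutes with $R^{(a,b)}(u)$ outright.

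The operator $K_i$ transports the $i$-th tensor factor once around the product: it first commutes that factor rightward past factors $i+1,\dots,n$ through $R^{(i+1,i)},\dots,R^{(n,i)}$, then applies the twist $D_i$, then brings it back leftward past factors $1,\dots,i-1$ through $R^{(1,i)},\dots,R^{(i-1,i)}$ with arguments shifted by $-\kp$. To prove the displayed identity I would slide the two twists $D_i,D_j$ through every $R$-matrix not touching factor $i$ or $j$ using weight invariance, and apply the Yang--Baxter equation repeatedly to reorder the remaining $R$-matrices on triples of factors, so as to bring the two factors acting on the pair $(i,j)$ into adjacent position. On the left-hand side these are $R^{(j,i)}(z_j-z_i)$ from $K_i$ and, from $K_j$ after the shift $z_i\mapsto z_i+\kp$, the factor $R^{(i,j)}((z_i+\kp)-z_j-\kp)=R^{(i,j)}(z_i-z_j)$; their spectral parameters are opposite, so unitarity collapses them to the identity. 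On the right-hand side the analogous pair is $R^{(i,j)}(z_i-z_j-\kp)$ from $K_j$ and $R^{(j,i)}((z_j+\kp)-z_i)$ from the shifted $K_i$, again opposite, again cancelling. After these cancellations and reorderings both sides reduce to the same normally ordered product of $R$-matrices and twists, establishing the identity.

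The hard part will be the bookkeeping of the spectral-parameter shifts together with the left/right placement of each $R$-matrix factor relative to the two twists $D_i$ and $D_j$. One must verify that the $-\kp$ shifts built into the return $R$-matrices and the external shift of the difference operator conspire so that the two $(i,j)$-factors to be cancelled carry genuinely opposite arguments, and one must ensure that each twist is commuted only past $R$-matrices with which it is weight-compatible, lest spurious factors appear. These verifications are elementary but intricate; they constitute precisely the computation of \cite{FR}, to which the theorem is attributed, and an equivalent reformulation is that the $R^{(a,b)}$ together with the twists define a consistent system of transport data around the cylinder whose monodromies for distinct strands commute.
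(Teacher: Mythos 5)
The paper gives no proof of this theorem; it is quoted from \cite{FR}. Your proposal reconstructs the standard argument of that reference: you correctly reduce commutativity of the difference operators to the exchange identity $\Ko_i(\zz)\Ko_j(\dots,z_i+\kp,\dots)=\Ko_j(\zz)\Ko_i(\dots,z_j+\kp,\dots)$, and you correctly identify the three ingredients (Yang--Baxter, unitarity $R^{(a,b)}(u)R^{(b,a)}(-u)=1$, which indeed follows from $(u-hP)(u+hP)=u^2-h^2$, and weight invariance of the diagonal twists) together with the two unitarity cancellations, $R^{(j,i)}(z_j-z_i)$ against $R^{(i,j)}(z_i-z_j)$ on one side and $R^{(i,j)}(z_i-z_j-\kp)$ against $R^{(j,i)}(z_j-z_i+\kp)$ on the other. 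What you defer --- the Yang--Baxter bookkeeping that brings these pairs adjacent and matches the two resulting normally ordered products --- is the bulk of the computation in \cite{FR}, but it is standard and does close, so your write-up is a correct outline of essentially the cited proof rather than a flawed one.
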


\begin{thm}[\cite{TV4}]
\label{thm qkz}
The operators \;$\Kh_{\zz,\qq,h,\kp,1}\lc\Kh_{\zz,\qq,h,\kp,n}$,
\,$\pho(\nabla_{\bla,\qq,\kp,1})\lc\pho(\nabla_{\bla,\qq,\kp,N})$ \,pairwise commute.
\end{thm}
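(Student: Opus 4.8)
The plan is to separate the claim into three families of commutators and treat them in increasing order of difficulty. The commutativity of the difference operators among themselves, $[\Kh_{\zz,\qq,h,\kp,a},\Kh_{\zz,\qq,h,\kp,b}]=0$ for $a,b=1\lc n$, is exactly the theorem of \cite{FR} quoted just above, so nothing new is needed there. The commutativity of the differential operators among themselves, $[\pho(\nabla_{\bla,\qq,\kp,i}),\pho(\nabla_{\bla,\qq,\kp,j})]=0$ for $i,j=1\lc N$, is Lemma \ref{flat+-}: that lemma gives flatness of $\nabla_{\bla,\qq,\kp}$ on an arbitrary $\ty$-module, i.e.\ pairwise commutativity of the $\nabla_{\bla,\qq,\kp,i}$, and pushing this forward through the representation $\pho$ (a homomorphism on the Yangian part, while the pairwise commuting derivations $\kp\,q_i\frac{\der}{\der q_i}$ pass through unchanged) preserves it. Hence the whole content of the theorem is the mixed commutativity $\big[\Kh_{\zz,\qq,h,\kp,a},\pho(\nabla_{\bla,\qq,\kp,i})\big]=0$ for $a=1\lc n$, $i=1\lc N$.

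First I would turn this into a closed matrix identity with no operators left to exploit. Write $\Kh_{\zz,\qq,h,\kp,a}=K_a\,S_a$, where $K_a=K_a(\zz;\qq;h;\kp)$ acts by multiplication and $S_a$ is the shift $z_a\mapsto z_a+\kp$, and write $\pho(\nabla_{\bla,\qq,\kp,i})=\kp\,q_i\frac{\der}{\der q_i}-X_i$ with $X_i:=\pho(X^q_{\bla,i})$ a matrix-valued function of $\zz,\qq,h$. Because $S_a$ acts only on $z_a$ and hence commutes with $\kp\,q_i\frac{\der}{\der q_i}$, the terms carrying a surviving derivative of the test function cancel, and the mixed commutator collapses to the first-order relation
\be
\kp\,q_i\,\frac{\der}{\der q_i}\,K_a\;=\;X_i\,K_a\,-\,K_a\,S_a(X_i)\,,
\ee
where $S_a(X_i)$ is $X_i$ with $z_a$ replaced by $z_a+\kp$.

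Next I would make both sides explicit. On the left, $\frac{\der}{\der q_i}$ acts only on the diagonal twist $q_1^{-e^{(a)}_{1,1}}\!\cdots q_N^{-e^{(a)}_{N,N}}$ sitting between the two $R$-matrix products of $K_a$, inserting the factor $-e^{(a)}_{i,i}$ there. On the right, by the explicit action of the dynamical Hamiltonian (Lemma \ref{lem action on V}, the passage to $X^q_{\bla,i}$ through the $z$-independent gauge factor of \Ref{gauge} not affecting the $\zz$-dependence), the only $z_a$-dependent term of $X_i$ is the position term $\sum_{b}z_b\,e^{(b)}_{i,i}$, so that $S_a(X_i)=X_i+\kp\,e^{(a)}_{i,i}$. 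Substituting, and using that $e^{(a)}_{i,i}$ commutes with the twist, the relation reduces to the purely algebraic identity
\be
[\,X_i\,,\,K_a\,]\;=\;\kp\,\Big(\prod_{b>a}R^{(b,a)}(z_b-z_a)\Big)\,q_1^{-e^{(a)}_{1,1}}\!\cdots q_N^{-e^{(a)}_{N,N}}\,\Big[\,\prod_{c<a}R^{(c,a)}(z_c-z_a-\kp)\,,\;e^{(a)}_{i,i}\,\Big]\,,
\ee
with the products ordered as in the definition of $K_a$ and with no difference or differential operator surviving.

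The heart of the matter, and the step I expect to be the main obstacle, is this last identity: it must be proved by a direct exchange computation. Two mechanisms drive it. First, the R-matrices $R^{(c,a)}$ are the Yang--Baxter intertwiners attached to the monodromy $L(u)$ of Section \ref{sec yang act C^N} out of which $X_i=\pho(X^q_{\bla,i})$ is assembled, so the commutator of $X_i$ with each factor $R^{(c,a)}$ is controlled by the Yang--Baxter and $RLL$ relations for $R(u)=\frac{u-h\<\>P}{u-h}$ (equivalently, the $RTT$ relations \Ref{ijkl}). Second, the diagonal twist $q_1^{-e^{(a)}_{1,1}}\!\cdots q_N^{-e^{(a)}_{N,N}}$ fails to commute with the off-diagonal part of $X_i$, and conjugating $e^{(a)}_{i,j}$ by it produces the factors $q_j/q_i$ that are the source of the rational dynamical coefficients $\frac{q_j}{q_i-q_j}$; this is where the $q$-dependent term of $X_i$ must be reproduced. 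Assembling all these contributions so that everything collapses onto the single residual commutator $\big[\prod_{c<a}R^{(c,a)},\,e^{(a)}_{i,i}\big]$ on the right is the delicate, calculation-heavy step. A cleaner alternative, presumably the route of \cite{TV4}, is to bypass this computation and invoke the $(\gln,\frak{gl}_n)$-duality: it trades the trigonometric dynamical connection for the trigonometric \KZ/ connection and the rational \qKZ/ difference operators for the rational dynamical difference operators, whereby the present compatibility becomes the already established compatibility of the trigonometric \KZ/ differential equation with the rational dynamical difference equation.
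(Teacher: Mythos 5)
The paper offers no proof of Theorem \ref{thm qkz} at all --- it is imported wholesale from \cite{TV4} --- so there is no internal argument to compare against, and your proposal must stand on its own. It does not. The organizational part is correct: the splitting into three families is right, the commutativity of the $\Kh_{\zz,\qq,h,\kp,a}$ among themselves is the quoted theorem of \cite{FR}, the commutativity of the $\pho(\nabla_{\bla,\qq,\kp,i})$ among themselves does follow from Lemma \ref{flat+-} pushed through $\pho$, and your reduction of the mixed commutator is carried out correctly (the computation $S_a(X_i)=X_i+\kp\,e^{(a)}_{i,i}$ from Lemma \ref{lem action on V}, the remark that $X^q_{\bla,i}-X^q_i$ is $\zz$-independent and weight-preserving, and the collapse onto the displayed identity for $[X_i,K_a]$ all check out). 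But that final identity \emph{is} the theorem: the entire nontrivial content --- the interplay of the $RTT$ relations with the diagonal twist and the rational coefficients $q_j/(q_i-q_j)$ --- is concentrated in it, and you explicitly decline to prove it, calling it ``the delicate, calculation-heavy step.'' Reducing the claim to an unproven identity of essentially the same difficulty is not a proof.

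The alternative you gesture at in your last sentence --- transporting the statement through the $(\gln,\gl_n)$-duality to the compatibility of the trigonometric KZ connection with the rational dynamical difference connection --- is indeed the route of \cite{TV4}, and it can in fact be assembled from results stated later in this paper: Theorem \ref{RDD} gives the compatibility on $\pi_{W_\bla}$, Theorem \ref{kz dyn thm 2} transports it to the pair $(\nabla^{V_\bla}_{\xx,\qq,\kp},\Kh^{V_\bla}_{\xx,\qq,\kp})$ on $\pi_{V_\bla}$, and Lemma \ref{ttt} translates that pair into $(\pho(\nabla_{\bla,\qq,\kp}),\Kh_{\zz,\qq,h,\kp})$ via $x_a=-z_a/h$ and $\kp\mapsto-\kp/h$; none of these depends on Theorem \ref{thm qkz}, so there is no circularity. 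But you do not execute this chain either --- in particular you would still need to check that the scalar gauge prefactors appearing in Lemma \ref{ttt} and in the passage between $W_\bla$ and $V_\bla$ intertwine \emph{both} families of operators, not just the differential ones. Either finish the direct exchange computation or write out the duality chain in full; as it stands the argument has a hole exactly where the theorem lives.
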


The commuting difference operators $\Kh_{\zz,\qq,h,\kp,1}\lc\Kh_{\zz,\qq,h,\kp,n}$ define the
rational \qKZ/ difference connection. We say that a $(\C^N)^{\otimes n}$-valued
function $F(\zz;\qq;h)$ is a flat section of the difference connection if
\bea
\Kh_{\zz,\qq,h,\kp,i} F(\zz;\qq;h) = F(\zz;\qq;h),
\qquad
i=1,\dots,n.
\eea
Theorem \ref{thm qkz} says that the \qKZ/ difference connection commutes with
the trigonometric dynamical connection $\pho(\nabla_{\bla,\qq,\kp})$ .

\section{Yangian $\ty$ weight functions}
\label{sec Weight functions}

\subsection{Weight functions $W_{I}$}
For $I\in \Il$, we define the weight functions $W_{I}(\TT;\zz;h)$, c.f. \cite{TV1, TV5}.

Recall \,$\bla=(\la_1\lc\la_N)$. Denote \,$\la^{(i)}\>=\la_1\lsym+\la_i$ and
\,$\la^{\{1\}}\<=\sum_{i=1}^{N-1}\la^{(i)}=$
\linebreak $ \sum_{i=1}^{N-1}(N\<\<-i)\>\la_i$\>.
Recall $I=(I_1,\dots,I_N)$. Set
\;$\bigcup_{\>k=1}^{\,j}I_k=\>\{\>i^{(j)}_1\!\lsym<i^{(j)}_{\la^{(j)}}\}$\>. Consider the variables
\,$t^{(j)}_a$, \,$j=1\lc N$, \,$a=1\lc\la^{(j)}$,
where \,$t^{(N)}_a=z_a$, \,$a=1\lc n$\>. Denote $t^{(j)}=(t^{(j)}_k)_{k\leq\la^{(j)}}$ and \,$\TT=(t^{(1)}, \dots, t^{(N-1)})$.

\vsk.2>
The weight functions are
\vvn.4>
\beq
\label{hWI-}
W_I(\TT;\zb;h)\,=\,(-h)^{\>\la^{\{1\}}}\,
\Sym_{\>t^{(1)}_1\!\lc\,t^{(1)}_{\la^{(1)}}}\,\ldots\;
\Sym_{\>t^{(N-1)}_1\!\lc\,t^{(N-1)}_{\la^{(N-1)}}}U_I(\TT;\zb;h)\,,
\vv.3>
\eeq
\be
U_I(\TT;\zb;h)\,=\,\prod_{j=1}^{N-1}\,\prod_{a=1}^{\la^{(j)}}\,\biggl(
\prod_{\satop{c=1}{i^{(j+1)}_c\<<\>i^{(j)}_a}}^{\la^{(j+1)}}
\!\!(t^{(j)}_a\<\<-t^{(j+1)}_c-h)
\prod_{\satop{d=1}{i^{(j+1)}_d>\>i^{(j)}_a}}^{\la^{(j+1)}}
\!\!(t^{(j)}_a\<\<-t^{(j+1)}_d )\,\prod_{b=a+1}^{\la^{(j)}}
\frac{t^{(j)}_a\<\<-t^{(j)}_b\<\<-h}{t^{(j)}_a\<\<-t^{(j)}_b}\,\biggr)\,.
\ee
In these formulas for a function $f(t_1,\dots,t_k)$ of some variables we denote
\be
\Sym_{t_1,\dots,t_k}f(t_1,\dots,t_k) = \sum_{\sigma\in S_k}f(t_{\sigma_1},\dots,t_{\sigma_k}).
\ee

\begin{example}
Let $N=2$, $n=2$, $\bla=(1,1)$, $I=(\{1\},\{2\})$,
$J=(\{2\}, \{1\})$. Then
\bea
W_I(\TT;\zz;h)= -h\, (t^{(1)}_1\<\<-z_2),
\qquad
W_J(\TT;\zz;h)= -h\, (t^{(1)}_1\<\<-z_1-h).
\eea
\end{example}

\subsection{Weight functions $W_{\si,I}$}

For $\si\in S_n$ and $I\in\Il$, we define
\bea
W_{\si,I}(\TT;\zz;h) = W_{\si^{-1}(I)}(\TT;z_{\si(1)},\dots,z_{\si(n)};h),
\eea
where $\si^{-1}(I)=(\si^{-1}(I_1),\dots,\si^{-1}(I_N))$.

For a subset $A\subset\{1,\dots,n\}$, denote $\zz_A=(z_a)_{a\in A}$. For $I\in\Il$, denote
$\zz_I=(\zz_{I_1},\dots,\zz_{I_N})$. For
$f(t^{(1)},\dots,t^{(N)})\in\C[t^{(1)},\dots,t^{(N)}]^{S_{\la^{(1)}}\times\dots\times S_{\la^{(N)}}}$,
we define $f(\zz_I)$ by replacing $t^{(j)}$ with $\cup_{k=1}^j\zz_{I_k}$.
Denote
\bea
c_\bla(\zb_I)\,=\,
\prod_{a=1}^{N-1}\>\prod_{\ij\in \cup_{b=1}^a I_b\!}\>(z_i\<-z_j\<-h)\,,
\qquad
\vv.1>
\eea
\bea
\label{RQ}
R(\zb_I)\,=\!\prod_{1\le a<b\le N}\,\prod_{i\in I_a}\,\prod_{j\in I_b}\,
(z_i-z_j)\,,\qquad
Q(\zb_I)\,=\!\prod_{1\le a<b\le N}\,\prod_{i\in I_a}\,\prod_{j\in I_b}\,
(z_i-z_j-h)\,.
\vv.3>
\eea

\subsection{Stable envelope map}
Following \cite{RTV}, we define the weight function map
\bea
[W_{\on{id}}] : V\otimes \C[\zz;h] \to H_T^*(\XX_n) , \quad
v_I \mapsto [W_{\on{id},I}(\bs\Theta;\zz;h)],
\eea
where $W_{\on{id},I}(\bs\Theta;\zz;h)$ is the polynomial $W_{\on{id},I}(\TT;\zz;h)$ in which variables
$t^{(j)}_i$ are replaced with $\theta_{j,i}$ and $[W_{\on{id},I}(\bs\Theta;\zz;h)]$ is the cohomology class represented by
$W_{\on{id},I}(\bs\Theta;\zz;h)$.

\smallskip
Denote
\bea
c_\bla(\bs \Theta) = \prod_{a=1}^{N-1}\prod_{i=1}^{\la^{(a)}}\prod_{j=1}^{\la^{(a)}}(\theta_{a,i}-\theta_{a,j} - h) \in H^*_T(T^*\F_\bla).
\eea
Observe that $c_\bla(\bs \Theta)$ is the equivariant Euler class of the bundle
$\oplus_{a=1}^{N-1} \Hom(\bs F_a, \bs F_a)$ if we make $\C^\times$ act on it with weight $-h$.
Note that $c_\bla(\bs \Theta)$ is not a zero-divisor in $H^*_T(\XX_n)$,
because none of its fixed point restrictions is zero.

\begin{thm} [Theorem 4.1 in \cite{RTV}] For any $\bla$ and any $I\in\Il$, the cohomology class
$[W_{\on{id},I}(\bs\Theta;\zz;h)]\in H^*_T(T^*\F_\bla)$ is divisible by $c_\bla(\bs \Theta)$, that is, there exists
a unique element
$\St_{\on{id},I} \in H^*_T(T^*\F_\bla)$ such that
\bean
\label{StW}
[W_{\on{id},I}(\bs\Theta;\zz;h)]\,=\, c_\bla(\bs \Theta)\cdot\St_{\on{id},I}\,.
\eean

\end{thm}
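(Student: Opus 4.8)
The plan is to combine a cancellation of the diagonal factors of $c_\bla(\bs\Theta)$ against the explicit prefactor of the weight function with $T$-fixed-point localization on $T^*\F_\bla$, whose fixed points are exactly the coordinate points $x_J$, $J\in\Il$. Uniqueness is immediate: as noted above, no fixed-point restriction of $c_\bla(\bs\Theta)$ vanishes, so $c_\bla(\bs\Theta)$ is a non-zero-divisor in $H^*_T(T^*\F_\bla)$, whence the element $\St_{\on{id},I}$ with $c_\bla(\bs\Theta)\,\St_{\on{id},I}=[W_{\on{id},I}]$, if it exists, is unique. To set up existence I would first isolate the diagonal part of $c_\bla$. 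Separating off the $i=j$ terms,
\be
c_\bla(\bs\Theta)\,=\,(-h)^{\la^{\{1\}}}\!\prod_{a=1}^{N-1}\ \prod_{\satop{1\le i,j\le\la^{(a)}}{i\ne j}}(\theta_{a,i}-\theta_{a,j}-h)\,,
\ee
since each level $a$ contributes $\la^{(a)}$ diagonal factors $-h$ and $\la^{\{1\}}=\sum_{a=1}^{N-1}\la^{(a)}$. As $W_{\on{id},I}$ carries precisely the prefactor $(-h)^{\la^{\{1\}}}$ in \Ref{hWI-} and $h$ is a non-zero-divisor, the theorem reduces to showing that $\Sym\,U_I$ (the iterated symmetrization in \Ref{hWI-} with each $t^{(j)}_i$ replaced by $\theta_{j,i}$) is divisible by the off-diagonal product $c'_\bla=\prod_{a=1}^{N-1}\prod_{1\le i\ne j\le\la^{(a)}}(\theta_{a,i}-\theta_{a,j}-h)$, with the same $\St_{\on{id},I}$.

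To prove this residual divisibility I would localize at the fixed points $x_J$. Since the fixed-point set is finite and $H^*_T(T^*\F_\bla)$ is free over $\C[\zz;h]$, the restriction map $H^*_T(T^*\F_\bla)\hookrightarrow\bigoplus_{J}\C[\zz;h]$ is injective, so it suffices to produce a class whose restriction at each $x_J$ equals $\Sym\,U_I|_{x_J}/c'_\bla|_{x_J}$. At $x_J$ the roots $\theta_{a,\bullet}$ specialize to $\{z_k:k\in J_1\cup\dots\cup J_a\}$, and I would compute $\Sym\,U_I|_{x_J}$ directly from \Ref{hWI-}: the apparent poles $1/(t^{(j)}_a-t^{(j)}_b)$ cancel across the symmetrization, leaving a product of linear factors $(z_k-z_l)$ and $(z_k-z_l-h)$. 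The crux is that the inter-level factors $(t^{(j)}_a-t^{(j+1)}_c-h)$ of $U_I$, evaluated where a level-$(j+1)$ root coincides with a level-$j$ root (the bundles nest, $\bs F_j\subset\bs F_{j+1}$), reproduce $c'_\bla|_{x_J}$ as a sub-product; dividing it out leaves a genuine polynomial $\St_{\on{id},I}|_{x_J}\in\C[\zz;h]$.

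Finally, to see that these quotients assemble into an honest class I would use the moment-graph description of $H^*_T(T^*\F_\bla)$, whose image in $\bigoplus_J\C[\zz;h]$ consists of tuples $(g_J)$ satisfying $g_J\equiv g_{J'}$ modulo the weight of the one-dimensional $T$-orbit through $x_J,x_{J'}$, for every edge. As $[W_{\on{id},I}]$ and $c'_\bla$ are genuine classes their restrictions satisfy these congruences, and I would check that the congruences persist after division; the tuple $(\St_{\on{id},I}|_{x_J})$ is then the restriction of a class $\St_{\on{id},I}$, and $c_\bla(\bs\Theta)\,\St_{\on{id},I}$ and $[W_{\on{id},I}]$ have equal restrictions, hence are equal by injectivity.

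The main obstacle is concentrated in the last two steps: carrying out the $\Sym$-cancellation to a closed product formula for $\Sym\,U_I|_{x_J}$ and reading off $c'_\bla|_{x_J}$ as a factor, and then handling the edges on which the orbit weight actually divides $c'_\bla|_{x_J}$, where preserving the congruence for the quotient forces control of the restriction of $[W_{\on{id},I}]$ to higher order. An alternative ending, which sidesteps the gluing, is to prove $c'_\bla\mid\Sym\,U_I$ intrinsically in the presentation of $H^*_T(T^*\F_\bla)$ by verifying, for each factor $(\theta_{a,i}-\theta_{a,j}-h)$, that $\Sym\,U_I$ restricts to zero on the corresponding divisor; I expect that bookkeeping to be of comparable difficulty.
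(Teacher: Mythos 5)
You should first be aware that the paper itself does not prove this theorem: it is imported from [RTV], and the only ingredient recorded here is the one you also use, namely that no fixed\-/point restriction of $c_\bla(\bs\Theta)$ vanishes. So your proposal must stand on its own. Its skeleton is reasonable: uniqueness via the non\-/zero\-/divisor property is correct and complete; the factorization $c_\bla(\bs\Theta)=(-h)^{\la^{\{1\}}}\prod_{a=1}^{N-1}\prod_{i\ne j}(\theta_{a,i}-\theta_{a,j}-h)$ and its cancellation against the prefactor in \Ref{hWI-} is correct; and reducing existence to the two statements (a) each restriction $W_{\on{id},I}(\zz_J;\zz;h)$, $J\in\Il$, is divisible by $c_\bla(\zz_J)$ in $\C[\zz;h]$, and (b) the tuple of quotients lies in the image of the restriction map to fixed points, is a legitimate localization strategy of the kind actually used in [RTV].

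The gap is that (a) and (b) are exactly where the content of the theorem lives, and you prove neither; both are explicitly deferred as ``obstacles.'' Worse, the obstacle you single out as the main one, in step (b), is illusory: since $T^*\F_\bla$ retracts $T$\</equivariantly onto $\F_\bla$ (compare the presentation \Ref{Hrel}, which does not see the cotangent directions), the image of $H^*_T(T^*\F_\bla)\to\bigoplus_{J\in\Il}\C[\zz;h]$ is cut out by the moment graph of $\F_\bla$ alone, that is, by congruences $g_J\equiv g_{J'}$ modulo $(z_i-z_j)$ whenever $J'$ is obtained from $J$ by exchanging $i\in J_a$ with $j\in J_b$; no edges of weight $z_i-z_j\pm h$ occur. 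Every factor of $c_\bla(\zz_J)$ equals $-h$ or $z_k-z_l-h$ and is therefore invertible modulo the prime $(z_i-z_j)$, and $c_\bla(\zz_J)\equiv c_\bla(\zz_{J'})$ modulo $(z_i-z_j)$, so the congruences pass to the quotients automatically --- there is nothing to ``control to higher order.'' By contrast, step (a) is a genuine combinatorial lemma and your treatment of it is only heuristic: after the symmetrization in \Ref{hWI-} the restriction $W_{\on{id},I}(\zz_J;\zz;h)$ is a sum of products, not a single product, so ``reading off'' the off\-/diagonal factor as a sub\-/product of the inter\-/level factors of $U_I$ does not apply to the sum (and even termwise one must also account for the within\-/level factors $t^{(j)}_a\<-t^{(j)}_b\<-h$ and verify the exact multiplicities). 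One needs the closed evaluation of the fixed\-/point restrictions of the weight functions, including their vanishing and triangularity properties, to extract the factor $c_\bla(\zz_J)$; that lemma is missing, so the proof is not complete. Your ``alternative ending'' is also not well posed as stated, since $\theta_{a,i}-\theta_{a,j}-h$ is not an element of the ring \Ref{Hrel} (it is not symmetric in the Chern roots), so ``restricting to the corresponding divisor'' would first have to be reformulated.
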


By using this theorem we define the stable envelope map
\bea
\St_{\on{id}} : V\otimes \C[\zz;h] \to H_T^*(\XX_n) , \quad
v_I \mapsto \St_{\on{id},I}.
\eea
The stable envelope maps are main objects in \cite{MO}.
The stable envelope maps are defined in \cite{MO} in terms of the torus $T$ action on $\XX_n$.
Relation \Ref{StW} gives a formula for the stable envelope map in terms of the Chern roots $\bs\Theta,\zz, h$.

As we know, formula \Ref{pho} defines the $\ty$-module structure $\pho$ on
$V\otimes \C[\zz;h]$, and formulas \Ref{Arho}, \Ref{rho E}, \Ref{rho F} define
the $\ty$-module structure $\rho$ on $H^*_T(\XX_n)$.

\begin{thm}[Theorem 6.3 in \cite{RTV}]
\label{thm stab yang}
The stable envelope map $\St_{\on{id}} : V\otimes \C[\zz;h]\to H^*_T(\XX_n)$ is a homomorphism
of $\ty$-modules.
\end{thm}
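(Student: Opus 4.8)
The plan is to exploit that $\ty$ is generated over $\C[h]$ by the coefficients of the series $A_p(u)$, $E_p(u)$ and $F_p(u)$, $p=1\lc N$, as recorded just after \Ref{EF}. Since $\St_{\on{id}}$ is $\C[\zz;h]$-linear (in particular $\C[h]$-linear), the set of algebra elements $g$ satisfying $\St_{\on{id}}(\pho(g)v)=\rho(g)\St_{\on{id}}(v)$ for all $v$ is a subalgebra, so it suffices to prove the three families of relations
\be
\St_{\on{id}}\bigl(\pho(A_p(u))\,v\bigr)=\rho(A_p(u))\,\St_{\on{id}}(v),\qquad
\St_{\on{id}}\bigl(\pho(E_p(u))\,v\bigr)=\rho(E_p(u))\,\St_{\on{id}}(v),
\ee
together with the analogous one for $F_p(u)$. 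By $\C[\zz;h]$-linearity I would check these on the basis vectors $v_I$, $I\in\Il$. Here $\pho(A_p(u))$ preserves the weight $\bla$, while $\pho(E_p(u))$ and $\pho(F_p(u))$ shift it by $\mp\aal_p$, matching the source and target spaces of $\rho(A_p(u))$, $\rho(E_p(u))$, $\rho(F_p(u))$ in \Ref{Arho}, \Ref{rho E}, \Ref{rho F}.

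Next I would pass from $\St_{\on{id}}$ to the weight functions via \Ref{StW}: since $[W_{\on{id},I}(\bs\Theta;\zz;h)]=c_\bla(\bs\Theta)\,\St_{\on{id},I}$ and $c_\bla(\bs\Theta)$ is not a zero-divisor in $H^*_T(\XX_n)$, each relation above becomes an identity between cohomology classes of the explicit symmetric polynomials $W_{\on{id},I}(\bs\Theta;\zz;h)$, provided one carries along the ratio $c_{\bla'}(\bs\Theta)/c_\bla(\bs\Theta)$ with $\bla'$ the target weight. For $A_p(u)$ these normalizing factors cancel and the relation reduces to showing that $W_{\on{id},I}$ is, up to the scalar $\prod_i\bigl(1-h/(u-\ga_{\pci})\bigr)$ of \Ref{Arho}, an eigenvector of the diagonal transfer-matrix entry $\pho(A_p(u))$; this follows from the Gauss-decomposition form of the $L$-operator action together with the product structure of $U_I$ in \Ref{hWI-}.

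The core of the proof is the pair of relations for $E_p(u)$ and $F_p(u)$. I would compute $\pho(E_p(u))\,v_I$ explicitly as a combination $\sum_{I'}c_{I'}(u;\zz;h)\,v_{I'}$, where $I'$ runs over the index sets obtained from $I$ by moving one index between the blocks $I_p$ and $I_{p+1}$, then apply $\St_{\on{id}}$ and compare with the sum \Ref{rho E}, whose $i$-th term carries the substitution $\GG\mapsto\GG^{\ipi}$ (the Chern root $\ga_{\pci}$ is moved from $\Ga_p$ to $\Ga_{p+1}$), the simple pole $1/(u-\ga_{\pci})$, and the factor $\prod_{j\ne i}(\ga_{\pcj}-\ga_{\pci})^{-1}\prod_k(\ga_{\pci}-\ga_{p+1,k}-h)$. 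Both sides are rational in $u$ with poles only at the points $u=\ga_{\pci}$, so the identity is pinned down by matching residues at each such pole and the behaviour as $u\to\8$. I would therefore reduce the whole relation to residue identities: the residue of the left-hand side at $u=\ga_{\pci}$ must reproduce the $i$-th summand of \Ref{rho E}, and symmetrically for $F_p(u)$ and \Ref{rho F}.

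The main obstacle is precisely these residue identities. The geometric formula moves a single distinguished root between adjacent Chern-root groups, whereas the weight function \Ref{hWI-} is built by symmetrizing over all variables $t^{(j)}_a$ in each group; one must show that taking the residue of $W_{\on{id},I}$ at $u=\ga_{\pci}$ selects exactly one symmetrization term and produces the prefactors $\prod_{j\ne i}(\ga_{\pcj}-\ga_{\pci})^{-1}$ and $\prod_k(\ga_{\pci}-\ga_{p+1,k}-h)$, together with the correct shift of the normalizing ratio $c_{\bla'}/c_\bla$. This is the genuinely technical step: it is a direct but delicate manipulation of the explicit products in $U_I$, and verifying that no spurious poles in $u$ appear (so that residues together with the growth at infinity really determine the rational function) is what makes the argument complete rather than merely formal.
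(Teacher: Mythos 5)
You give a plausible reduction, but the argument as written has real gaps, and it also departs from the route the paper relies on. The paper itself offers no proof: the theorem is quoted from [RTV], where the statement is established through the inverse map $\nu$ of Lemma \ref{lem nu stab = 1}. There one shows that $\nu$, defined by restriction to the torus fixed points, $[f]\mapsto\sum_{I}f(\zz_I;\zz;h)\,\xi_I/R(\zz_I)$, intertwines $\rho$ with $\pho$; the input is a collection of substitution, recursion and orthogonality identities for the weight functions $W_{\si,I}$ evaluated at the points $\zz_J$ (proved in [TV5] and [RTV]). Your first step --- reducing to the generating series $A_p(u)$, $E_p(u)$, $F_p(u)$ and to the basis vectors $v_I$, using that the intertwined elements form a subalgebra --- is correct and matches the cited proof. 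The difficulties begin after that.

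First, an identity in $H^*_T(T^*\F_\bla)$ is an identity modulo the relations \Ref{Hrel}, not an identity of polynomials in $\bs\Theta$; the two sides of your proposed equalities need not, and in general do not, agree as polynomials. The standard remedy, and the one the cited proof uses, is equivariant localization: $H^*_T(T^*\F_\bla)$ embeds into its fixed-point localization, so it suffices to verify each relation after every substitution $\GG\mapsto\zz_J$, $J\in\Il$. Your residue-in-$u$ bookkeeping could be organized on top of that, but as stated your target (``an identity between cohomology classes of the explicit symmetric polynomials $W_{\on{id},I}$'') is not well posed without this step. Second, the claim that the $A_p(u)$ relation reduces to $W_{\on{id},I}$ being an eigenvector of $\pho(A_p(u))$ is incorrect: the quantum minors $A_p(u)$ generate the Gelfand--Zetlin subalgebra, which acts on $V_\bla\ox\C[\zz;h]$ nondiagonally in the tensor basis $v_I$ (its eigenbasis is the Gelfand--Zetlin/Bethe basis), whereas $\rho(A_p(u))$ in \Ref{Arho} is a multiplication operator; the equality of the two only becomes transparent after restricting to fixed points. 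Third, the residue identities for $E_p(u)$ and $F_p(u)$, which you correctly identify as the heart of the matter, are precisely the nontrivial combinatorial properties of the weight functions that the cited references prove; deferring them leaves the argument a plan rather than a proof.
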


\subsection{The inverse map}
For $I\in\Il$, introduce $\xi_{I}\in \Czh$ by the formula
\bean
\label{xi}
\xi_{I} = \sum_{J\in \Il} \frac{W_{\si_0,J}(\zz_I;\zz;h)}{Q(\zz_I)\,c_\bla(\zz_I)} \,v_J ,
\eean
where $\si_0\in S_n$ is the longest permutation.

Let $\C(\zz;h)$ be the algebra of rational functions in $\zz, h$. Consider the map
\bea
\nu=\oplus_{|\bla|=n}\nu_\bla \
: \ \oplus_{|\bla|=n} H_T^*(\XX_\bla)\otimes\C(\zz;h)\ \to \
\Czh,
\eea
where $\nu_\bla$ is defined by the formula
\bea
[f(\bs\Theta;\zz;h)] \mapsto \sum_{I\in\Il} \frac{f(\zz_I;\zz;h)}{R(\zz_I)}\xi_{I}\,,
\eea
see \cite[Formula (5.9)]{GRTV}.

\begin{lem} [Lemma 6.7 in \cite{RTV}]
\label{lem nu stab = 1}
The operator $\nu$ is inverse to $\St_{\on{id}}$.
\end{lem}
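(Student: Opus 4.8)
The plan is to pass to the localized picture over the field \,$\C(\zz;h)$\,, where restriction to the $T$-fixed points becomes an isomorphism, turn both \,$\St_{\on{id}}$\, and \,$\nu$\, into explicit square matrices indexed by \,$\Il$\,, and thereby reduce the claim to a single orthogonality relation for the weight functions. Since all the maps respect the weight decomposition, it suffices to treat one weight subspace \,$V_\bla\otimes\C(\zz;h)$\, together with the summand \,$H^*_T(\tfl)\otimes\C(\zz;h)$\,. The $T$-fixed points of \,$\tfl$\, are the coordinate points \,$x_K$, $K\in\Il$, and the restriction of a class \,$[f(\bs\Theta;\zz;h)]$\, to \,$x_K$\, is the substitution \,$\bs\Theta\mapsto\zz_K$, that is \,$f(\zz_K;\zz;h)$; by the localization theorem the resulting map \,$H^*_T(\tfl)\otimes\C(\zz;h)\to\bigoplus_{K\in\Il}\C(\zz;h)$\, is an isomorphism. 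Under it, the defining relation \Ref{StW} shows that \,$\St_{\on{id}}$\, is given by the matrix \,$S_{K,I}=W_{\on{id},I}(\zz_K;\zz;h)/c_\bla(\zz_K)$, while the definitions of \,$\nu_\bla$\, and \,$\xi_I$\, give \,$N_{J,K}=W_{\si_0,J}(\zz_K;\zz;h)/\bigl(R(\zz_K)\,Q(\zz_K)\,c_\bla(\zz_K)\bigr)$. A direct composition then shows that \,$\nu\circ\St_{\on{id}}=\id$\, on \,$V_\bla\otimes\C(\zz;h)$\, is equivalent to the identity
\[
\sum_{K\in\Il}\frac{W_{\on{id},I}(\zz_K;\zz;h)\,W_{\si_0,J}(\zz_K;\zz;h)}{R(\zz_K)\,Q(\zz_K)\,c_\bla(\zz_K)^2}\,=\,\delta_{I,J}\,,\qquad I,J\in\Il\,,
\]
and, because \,$S$\, and \,$N$\, are finite square matrices over the field \,$\C(\zz;h)$\,, establishing this one relation automatically yields \,$\St_{\on{id}}\circ\nu=\id$\, as well.

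To prove the displayed orthogonality I would first recognize that, up to sign, the denominator \,$R(\zz_K)\,Q(\zz_K)\,c_\bla(\zz_K)^2$\, is the equivariant Euler class of the tangent space \,$T_{x_K}\tfl$\,, so that the sum is the Atiyah--Bott localization of the equivariant Poincar\'e pairing of \,$\St_{\on{id},I}$\, with the opposite-chamber stable envelope class assembled from the $\si_0$-weight functions \,$W_{\si_0,J}$. The combinatorial input is the pair of triangularities with respect to opposite orders on \,$\Il$: from the explicit formula \Ref{hWI-} one checks that \,$W_{\on{id},I}(\zz_K)$\, vanishes unless \,$K$\, dominates \,$I$\, and that \,$W_{\si_0,J}(\zz_K)$\, vanishes unless \,$K$\, is dominated by \,$J$\,, and one evaluates the leading diagonal products \,$W_{\on{id},I}(\zz_I)$\, and \,$W_{\si_0,I}(\zz_I)$\, directly. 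Triangularity in opposite orders makes the pairing matrix triangular, and the leading evaluation shows its diagonal entries equal \,$1$.

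The genuine obstacle is the vanishing of the off-diagonal pairings, which is the full content of the orthogonality and is not forced by triangularity alone. The cleanest route is geometric: the sum is exactly the Maulik--Okounkov duality pairing between the stable basis \,$\St_{\on{id}}$\, and the opposite-chamber stable basis, whose duality \,$\langle\St_{+,I},\St_{-,J}\rangle=\delta_{I,J}$\, is precisely the required statement; alternatively one argues combinatorially from the residue and recursion relations satisfied by the weight functions, as in the orthogonality relations of \cite{RTV, TV1}. The two checks one must get exactly right are that the denominator \,$R(\zz_K)\,Q(\zz_K)\,c_\bla(\zz_K)^2$\, indeed reproduces \,$e(T_{x_K}\tfl)$\, and that \,$W_{\si_0,J}$\, yields the opposite stable envelope with the correct normalization; once these are in place the duality closes the argument, and the square-matrix remark of the first paragraph upgrades the one-sided identity to the asserted two-sided inverse.
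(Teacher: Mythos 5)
The paper offers no proof of this lemma --- it is quoted verbatim as Lemma 6.7 of \cite{RTV} --- so the relevant comparison is with the argument there, and your reduction is exactly that argument: localize to the fixed points $x_K$, write $\St_{\on{id}}$ and $\nu$ as square matrices over $\C(\zz;h)$, and observe that $\nu\circ\St_{\on{id}}=\id$ is equivalent to the orthogonality relation
\[
\sum_{K\in\Il}\frac{W_{\on{id},I}(\zz_K;\zz;h)\,W_{\si_0,J}(\zz_K;\zz;h)}{R(\zz_K)\,Q(\zz_K)\,c_\bla(\zz_K)^2}\,=\,\delta_{I,J}\,,
\]
with the square-matrix remark upgrading this to a two-sided inverse. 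Two small points. First, a bookkeeping slip: $R(\zz_K)\,Q(\zz_K)$ alone is (up to sign) the Euler class $e(T_{x_K}\tfl)$; the extra factor $c_\bla(\zz_K)^2$ is not part of the tangent Euler class but comes from converting the two weight functions $W$ into the stable classes $W/c_\bla$, so your identification of the sum with the Poincar\'e pairing of the two opposite-chamber stable envelopes is still correct once this is sorted. Second, the off-diagonal vanishing --- which you rightly flag as the real content --- is not established here but deferred either to Maulik--Okounkov stable-envelope duality or to the combinatorial orthogonality of weight functions; the latter is an independent lemma in \cite{RTV} (and has antecedents in \cite{TV1}), so the deferral is legitimate and not circular, but a self-contained proof would still need the triangularity-plus-residue argument you sketch to be carried out.
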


\subsection{The difference connection on $H^*_T(T^*\F_\bla)$}

By Theorem \ref{thm qkz}, the difference operators
\bea
\St_{\on{id}}\circ\Hat\Ko_{\zz,\tilde q_1^{\,-1},\dots, \tilde q_N^{\,-1},-\kp,1} \circ \nu,
\quad
\dots,
\quad
\St_{\on{id}}\circ\Hat\Ko_{\zz,\tilde q_1^{\,-1},\dots, \tilde q_N^{\,-1},-\kp,n} \circ\nu
\eea
and the differential operators $\nabla_{\on{quant},\bla,\tilde\qq,\kp,1},\dots,\nabla_{\on{quant},\bla,\tilde\qq,\kp,N}$
pairwise commute. The difference operators form
the rational \qKZ/ difference connection on $H^*_T(T^*\F_\bla)$. This difference connection is discussed in \cite{MO}
under the name of the shift operators.

\section{Four more connections}

In the remainder of this paper we fix $\bla\in\Z_{\geq 0}^N$, $|\bla|=n$.

\subsection{Trigonometric dynamical connection on $\pi_{V_\bla}$}
Consider $\C^n$ with coordinates $\xx=(x_1,\dots,x_n)$
and $\C^N$ with coordinates $\qq=(q_1,\dots,q_N)$.
Consider
the trivial bundle $\pi_{V_\bla} : V_\bla \times \C^{n+N} \to \C^{n+N}$.

Following \cite{TV4} and \cite[Appendix B]{MTV1},\ introduce dynamical Hamiltonians
\bea
X_i^{V_\bla}(\xx,\qq) \, =\,\sum_{a=1}^n x_a\>e^{(a)}_{\ii}-\>
\frac {e_{\ii}^2}2+\>\sum_{j=1}^N\,\sum_{1\leq a<b\leq n}\!e^{(a)}_{\ij}\>e^{(b)}_{\ji}
+\>\sum_{\satop{j=1}{j\ne i}}^N\,\frac{q_j}{q_i-q_j}\,G_{\ij}\,,
\eea
acting the $V_\bla$-valued functions of $\xx,\qq$. By formula \Ref{phoX8}, we have
\bean
\label{HH}
\pho(X^q_i)(\zz; \qq; h)
&=& -h X_i^{V_\bla}(-z_1/h,\dots,-z_n/h;\qq)- \frac h2 \,e_{i,i}\,=
\\
\notag
&=& -h X_i^{V_\bla}(-z_1/h,\dots,-z_n/h;\qq)- \frac h2 \,\la_i\,.
\eean
For $\kp\in\C^\times$ the differential operators
\bean
\label{t nabla}
\nabla_{\xx,\qq,\kp,i}^{V_\bla}\>=\,\kp\,q_i\frac{\der}{\der q_i}\>-\>X_i^{V_\bla}(\xx,\qq), \qquad i=1\lc N,
\eean
pairwise commute and define a flat connection $\nabla_{\xx,\qq,\kp}^{V_\bla}$ on
$\pi_{V_\bla}$, see \cite{TV4}.

\subsection{Rational \qKZ/ difference connection on $\pi_{V_\bla}$}
\label{sec qkZ} \,Let
\vvn.1>
\be
\tilde R^{(\ij)}(u)\,=\,\frac{u+\<\>P^{(\ij)}}{u+1}\;,\qquad
i,j=1\lc n\,,\quad i\ne j\,.\kern-3em
\vv.3>
\ee
For $\kp\in\C^\times$, define operators \,$K_1^{V_\bla} \lc \tilde K_n^{V_\bla} $ on $V_\bla$ by the formula
\vvn.4>
\begin{align*}
K_i^{V_\bla} (\xx;\qq;\kp)\>&{}=\,
\tilde R^{(i+1,i)}(x_{i+1}\<-x_i)\,\dots\,\tilde R^{(n,i)}(x_n\<-x_i)\,\times{}
\\[2pt]
& {}\>\times\<\;q_1^{-e_{1,1}^{(i)}}\!\dots\,q_N^{-e_{N,N}^{(i)}}\,
\tilde R^{(1,i)}(x_1\<-x_i\<-\kp\<\>)\,\dots\,
\tilde R^{(i-1,i)}(x_{i-1}\<-x_i\<-\kp\<\>)\,.
\end{align*}
Consider the difference operators
\,$K_{\xx,\qq,\kp,1}^{V_\bla},\dots,K_{\xx,\qq,\kp,n}^{V_\bla}$
\vvn.2>
acting on \,$ (\C^N)^{\otimes n}\<$-valued functions of \>$\xx,\qq$\>,
\beq
\label{bar K_i}
\Kh_{\xx,\qq,\kp,i}^{V_\bla}\>F(\xx,\qq)\,=\,
K_i^{V_\bla}(\xx;\qq;\kp)\,F(x_1,\dots,x_{i-1},x_i+\kp,x_{i+1},\dots,x_n;\qq)\,.
\eeq

\begin{thm}[\cite{FR}]
The operators \;$\Kh_{\xx,\qq,\kp,1}^{V_\bla}\lc\Kh_{\xx,\qq,\kp,n}^{V_\bla}$ pairwise commute.
\end{thm}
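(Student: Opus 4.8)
The plan is to reduce the commutativity of the difference operators to a cocycle identity among the matrix-valued factors $K_i^{V_\bla}$, and then to verify that identity from the three structural properties of the rational R-matrix $\tilde R$. Let $T_i$ denote the shift operator $(T_i F)(\xx)=F(x_1,\dots,x_i+\kp,\dots,x_n)$; these commute among themselves. Writing $\Kh_{\xx,\qq,\kp,i}^{V_\bla}F(\xx)=K_i^{V_\bla}(\xx)\,(T_iF)(\xx)$ and composing, one finds $\Kh_{\xx,\qq,\kp,i}^{V_\bla}\Kh_{\xx,\qq,\kp,j}^{V_\bla}F=K_i^{V_\bla}(\xx)\,K_j^{V_\bla}(T_i\xx)\,(T_iT_jF)$, and symmetrically with $i,j$ interchanged. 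Since $T_iT_j=T_jT_i$, pairwise commutativity is equivalent to the cocycle identity
\[
K_i^{V_\bla}(\xx)\,K_j^{V_\bla}(T_i\xx)\,=\,K_j^{V_\bla}(\xx)\,K_i^{V_\bla}(T_j\xx),
\]
where the $\qq,\kp$ dependence is suppressed. It suffices to treat the case $i<j$.

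Next I would record the three properties of $\tilde R^{(ab)}(u)=(u+P^{(ab)})/(u+1)$ that drive the argument. These are the Yang-Baxter equation $\tilde R^{(ab)}(u-v)\,\tilde R^{(ac)}(u)\,\tilde R^{(bc)}(v)=\tilde R^{(bc)}(v)\,\tilde R^{(ac)}(u)\,\tilde R^{(ab)}(u-v)$; the unitarity $\tilde R^{(ab)}(u)\,\tilde R^{(ba)}(-u)=\id$, which follows immediately from $(P^{(ab)})^2=\id$; and the $\gln$-invariance $[\tilde R^{(ab)}(u),\,x^{(a)}+x^{(b)}]=0$ for every $x\in\gln$, so that in particular $\tilde R^{(ab)}(u)$ commutes with the diagonal twist $\prod_k q_k^{-e_{kk}^{(a)}-e_{kk}^{(b)}}$. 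Each of these is an elementary check for the scalar-normalized rational R-matrix above.

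To verify the cocycle identity I would expand both sides as ordered products of R-matrices and twists. The $\kp$-shifts built into the arguments of $K_i^{V_\bla}$ are arranged precisely so that factors line up under the translation $T_i$: for instance the factor $\tilde R^{(ij)}(x_i-x_j-\kp)$ occurring in $K_j^{V_\bla}(\xx)$ becomes $\tilde R^{(ij)}(x_i-x_j)$ in $K_j^{V_\bla}(T_i\xx)$, matching the corresponding unshifted factor on the other side. One then transports R-matrices across one another using Yang-Baxter, cancels opposite pairs $\tilde R^{(ab)}(u)\,\tilde R^{(ba)}(-u)$ by unitarity, and commutes R-matrices past the twists $q_\bullet^{-e^{(i)}}$, $q_\bullet^{-e^{(j)}}$ using invariance, transforming the left-hand side into the right-hand side. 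Conceptually this is the flatness of the rational \qKZ/ difference connection established by Frenkel and Reshetikhin in \cite{FR}.

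The main obstacle is the bookkeeping in the last step: tracking exactly which R-matrix factors acquire the $\kp$-shift under $T_i$ (respectively $T_j$), and orchestrating the Yang-Baxter moves so that every pair created along the way is eliminated by unitarity while the twists slide through by invariance. Each individual identity is trivial because $\tilde R$ is rational and scalar-normalized, but the combinatorics of the full reordering — the interplay between the cyclic ``wrap-around'' structure of $K_i^{V_\bla}$ and the shift $T_i$ — is the substantive content, and is precisely what is carried out in \cite{FR}.
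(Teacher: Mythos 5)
The paper offers no proof of this theorem at all --- it is simply quoted from \cite{FR} --- so there is nothing internal to compare your argument against. Your reduction of commutativity to the cocycle identity $K_i^{V_\bla}(\xx)\,K_j^{V_\bla}(T_i\xx)=K_j^{V_\bla}(\xx)\,K_i^{V_\bla}(T_j\xx)$, together with the three properties of $\tilde R^{(ab)}(u)=(u+P^{(ab)})/(u+1)$ that you invoke (the Yang--Baxter equation, the unitarity $\tilde R^{(ab)}(u)\,\tilde R^{(ba)}(-u)=\id$, and $\gln$-invariance, hence commutation with the diagonal twists), is exactly the Frenkel--Reshetikhin argument, and each of those three properties is indeed an elementary check for this scalar-normalized rational R-matrix. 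The only caveat is that you do not actually execute the reordering that proves the cocycle identity --- the step you yourself identify as the substantive content is deferred to \cite{FR} --- so what you have written is an accurate and correctly set-up roadmap rather than a self-contained proof; since the paper likewise supplies only the citation, this is not a defect relative to the source, but be aware that a referee asking for a full proof would require you to carry out that bookkeeping.
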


\begin{thm}[\cite{TV4}]
\label{thm qkz2}
The operators \;$\Kh_{\xx,\qq,\kp,1}^{V_\bla}\lc\Kh_{\xx,\qq,\kp,n}^{V_\bla}$\>,
\,$\nabla_{\xx,\qq,\kp,1}^{V_\bla}, \dots, \nabla_{\xx,\qq,\kp,N}^{V_\bla}$ \,pairwise commute.
\end{thm}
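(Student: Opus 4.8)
The plan is to deduce the statement from Theorem \ref{thm qkz} by the explicit change of variables $z_a=-h\>x_a$ ($a=1\lc n$) together with the rescaling $\kp'=-h\kp$ of the shift parameter. Pullback along this substitution is an algebra isomorphism between operators in the $\zz$-variables and operators in the $\xx$-variables, so it preserves all commutators; the same is true of multiplication by the nonzero constant $-h^{-1}$ and of the addition of scalar operators. Hence it suffices to identify each operator in the theorem, up to such harmless modifications, with the corresponding operator of Theorem \ref{thm qkz}, and then invoke the commutativity already established there.

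For the differential operators I would use formula \Ref{HH}, namely $\pho(X^q_i)(\zz;\qq;h)=-h\>X_i^{V_\bla}(-z_1/h\lc -z_n/h;\qq)-\tfrac h2\la_i$. Since the substitution $z_a=-h\>x_a$ leaves $\qq$ and the Euler operators $q_i\>\der/\der q_i$ untouched, a direct comparison gives
\[
\nabla^{V_\bla}_{\xx,\qq,\kp,i}\,=\,-h^{-1}\,\pho(\nabla_{\qq,-h\kp,i})\big|_{z=-hx}\,+\,\tfrac12\la_i\,,\qquad i=1\lc N\,.
\]
As $\tfrac12\la_i$ is scalar and $-h^{-1}$ is a nonzero constant, the $\nabla^{V_\bla}_{\xx,\qq,\kp,i}$ commute among themselves iff the $\pho(\nabla_{\qq,-h\kp,i})$ do, which holds because the trigonometric dynamical connection is flat (the remark after \Ref{dyneq}).

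For the difference operators I would check directly from the definitions the $R$-matrix identity $\tilde R^{(\ij)}(u)=R^{(\ij)}(-h\>u)$. Then under $z_a=-h\>x_a$, $\kp'=-h\kp$ each factor $\tilde R^{(a,i)}(x_a-x_i)$ of $K^{V_\bla}_i$ becomes $R^{(a,i)}(z_a-z_i)$, each factor $\tilde R^{(a,i)}(x_a-x_i-\kp)$ becomes $R^{(a,i)}(z_a-z_i-\kp')$, the weight factor $\prod_k q_k^{-e^{(i)}_{kk}}$ is unchanged, and the shift $x_i\mapsto x_i+\kp$ becomes $z_i\mapsto z_i+\kp'$. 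Therefore the substitution carries $\Kh^{V_\bla}_{\xx,\qq,\kp,i}$ to $\Kh_{\zz,\qq,h,-h\kp,i}$, and the mutual commutativity of the former follows from that of the latter.

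It then remains to prove $[\Kh^{V_\bla}_{\xx,\qq,\kp,i},\nabla^{V_\bla}_{\xx,\qq,\kp,j}]=0$, which by the two identifications is equivalent to $[\Kh_{\zz,\qq,h,\kp',i},\pho(\nabla_{\qq,\kp',j})]=0$ on $V_\bla\ox\C[\zz;h]$. I expect this to be the one point needing care, since Theorem \ref{thm qkz} is stated for the connection $\nabla_{\bla,\qq,\kp}$ built from $X^q_{\bla,j}$, while \Ref{HH} delivers $\nabla_{\qq,\kp}$ built from $\Xk_j$. On the weight space $V_\bla$ the generators $e_{\ii}$ act as the scalars $\la_i$, so by \Ref{Xkm} the difference $X^q_{\bla,j}-\Xk_j$ acts as a scalar function of $\qq$; equivalently the gauge factor $\Ups_\bla$ of Lemma \ref{flat+-} restricts to a scalar on $V_\bla$. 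Because $\Kh_{\zz,\qq,h,\kp',i}$ shifts only $z_i$ and multiplies by operators commuting with scalar functions of $\qq$, it commutes with this scalar difference, whence $[\Kh_{\zz,\qq,h,\kp',i},\pho(\nabla_{\qq,\kp',j})]=[\Kh_{\zz,\qq,h,\kp',i},\pho(\nabla_{\bla,\qq,\kp',j})]=0$ by Theorem \ref{thm qkz}. The $\bla$ versus non-$\bla$ reconciliation is the only genuine obstacle; everything else is bookkeeping of the change of variables.
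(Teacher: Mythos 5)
Your argument is correct, but it is worth saying up front that the paper itself offers no proof of Theorem \ref{thm qkz2}: both this statement and Theorem \ref{thm qkz} are quoted from \cite{TV4} without proof. What you have produced is a sound reduction of one cited result to the other, and it is essentially the operator-level version of what the paper does at the level of flat sections in Lemma \ref{ttt} (whose proof is likewise "follows from \Ref{gauge} and \Ref{HH}" plus inspection of the $R$-matrices). Your bookkeeping checks out: $\tilde R^{(\ij)}(u)=R^{(\ij)}(-h\>u)$ is immediate from the definitions, so under $z_a=-h\>x_a$, $\kp'=-h\kp$ the operator $\Kh^{V_\bla}_{\xx,\qq,\kp,i}$ is carried to $\Kh_{\zz,\qq,h,\kp',i}$; formula \Ref{HH} gives $\nabla^{V_\bla}_{\xx,\qq,\kp,i}=-h^{-1}\pho(\nabla_{\qq,-h\kp,i})\big|_{z=-hx}+\tfrac12\la_i$; and you correctly identify and dispose of the one real issue, namely that Theorem \ref{thm qkz} involves $\nabla_{\bla,\qq,\kp}$ built from $X^q_{\bla,j}$ rather than $\nabla_{\qq,\kp}$ built from $X^q_j$ --- on $V_\bla$ the difference $G_{\bla,\ij}-G_{\ij}\in\{e_{\ii},e_{\jj}\}$ acts by the scalars $\la_i$ or $\la_j$, so the two connections differ by multiplication by a scalar function of $\qq$, with which each $\Kh_{\zz,\qq,h,\kp',i}$ commutes since it neither shifts $\qq$ nor involves operators failing to commute with scalars. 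The only caveat is logical rather than mathematical: since Theorems \ref{thm qkz} and \ref{thm qkz2} are both attributed to the same source, deriving the second from the first does not yield an independent proof of either; but as a demonstration that the two compatibility statements are equivalent under the rescaling $z=-hx$, $\kp\mapsto-h\kp$, your argument is complete and correct.
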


The commuting difference operators
$\Kh_{\xx,\qq,\kp,1}^{V_\bla}\lc\Kh_{\xx,\qq,\kp,n}^{V_\bla}$ define the rational
\qKZ/ difference connection on $\pi_{V_\bla}$. Theorem \ref{thm qkz2} says
that the rational \qKZ/ difference connection commutes with the trigonometric
dynamical connection $\nabla_{\xx,\qq,\kp}^{V_\bla}$.

\subsection{Relations between flat sections}

\begin{lem}
\label{ttt}
Let $\kp,h\in\C^\times$.
\begin{enumerate}
\item[(i)] Assume
that $\tilde F(\xx;\qq) \in V_\bla$ is a flat section of the connection $\nabla_{\xx,\qq,-\kp/h}^{V_\bla}$.
Then
\bea
F(\zz;\qq;h;\kp)=
\prod_{i=1}^N q_i^{-\frac{h\la_i}{2\kp}}
\prod_{1\le i<j\le N}\!(1-\kk_j/\kk_i)
^{\<-\frac{h\>\min(\la_i,\la_j)}{\<\kp}} \tilde F(-z_1/h,\dots,-z_n/h,\qq)
\vv.2>
\eea
is a flat section of the connection $\pho(\nabla_{\bla,\qq,\kp})$.
\item[(ii)]
Assume that $\tilde F(\xx;\qq) \in V_\bla$ is a flat section
of the difference connection defined by the operators
$\Kh_{\xx,\qq,-\kp/h,1}^{V_\bla}\lc\Kh_{\xx,\qq,-\kp/h,n}^{V_\bla}$ in \Ref{bar K_i}.
Then $F(\zz;\qq;h)$ is a flat section of the difference connection defined
by the operators $\Kh_{\zz,\qq,\kp,1}\lc\Kh_{\zz,\qq,\kp,n}$ in \Ref{K_i}.
\end{enumerate}
\end{lem}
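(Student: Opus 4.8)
The lemma has two parts, both asserting that a certain change of variables together with an explicit prefactor converts flat sections of the connections on $\pi_{V_\bla}$ into flat sections of the $\pho$-connections on $(\C^N)^{\otimes n}\otimes\C[\zz;h]$. The common mechanism is the substitution $x_a\mapsto -z_a/h$ (equivalently $\xx=-\zz/h$) together with identity \Ref{HH}, which relates $\pho(X^q_i)$ to the dynamical Hamiltonian $X_i^{V_\bla}$, and the gauge factor of Lemma \ref{flat+-}. I would treat part (i) first, since it is the differential statement and part (ii) is its difference-equation analogue.

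\emph{Part (i).} By definition, $\tilde F$ flat for $\nabla^{V_\bla}_{\xx,\qq,-\kp/h}$ means $-\tfrac\kp h\,q_i\partial_{q_i}\tilde F=X_i^{V_\bla}(\xx,\qq)\tilde F$ for all $i$. I would substitute $\xx=-\zz/h$ and multiply through by $-h$, so that the left side becomes $\kp\,q_i\partial_{q_i}\tilde F$ and the right side becomes $-h\,X_i^{V_\bla}(-\zz/h,\qq)\tilde F$. By \Ref{HH} we have $-h\,X_i^{V_\bla}(-\zz/h,\qq)=\pho(X^q_i)(\zz;\qq;h)+\tfrac h2\la_i$, so $\tilde F$ already satisfies $\bigl(\kp\,q_i\partial_{q_i}-\pho(X^q_i)\bigr)\tilde F=\tfrac h2\la_i\,\tilde F$; the scalar discrepancy $\tfrac h2\la_i$ is exactly what the prefactor $\prod_i q_i^{-h\la_i/(2\kp)}$ is designed to absorb, since applying $\kp\,q_i\partial_{q_i}$ to that monomial produces $-\tfrac h2\la_i$. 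The remaining factor $\prod_{i<j}(1-\kk_j/\kk_i)^{-h\min(\la_i,\la_j)/\kp}$ is precisely the gauge transformation $\Ups_\bla$ of \Ref{gauge} (note $\eps_{\bla,\ij}=e_{\jj}$ resp.\ $e_{\ii}$ acts on $V_\bla$ as $\la_j$ resp.\ $\la_i$, so $h\,\eps_{\bla,\ij}/\kp$ becomes $h\min(\la_i,\la_j)/\kp$), which by Lemma \ref{flat+-} converts the connection built from $X^q_{\bla,i}=\Xk_i$-type Hamiltonians into $\pho(\nabla_{\bla,\qq,\kp})$. Assembling these three observations — the $-h$ rescaling with \Ref{HH}, the scalar absorption, and the gauge identification — shows $F$ is flat for $\pho(\nabla_{\bla,\qq,\kp})$.

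\emph{Part (ii).} Here I would compare the two difference operators directly under $\xx=-\zz/h$. The shift $x_i\mapsto x_i+\kp$ corresponds to $z_i\mapsto z_i-h\kp$; but $\Kh_{\zz,\qq,\kp,i}$ in \Ref{K_i} shifts $z_i\mapsto z_i+\kp$, so I expect the correct comparison to use the rescaling of the spectral parameter built into the two $R$-matrices: $\tilde R^{(\ij)}(u)=(u+P^{(\ij)})/(u+1)$ versus $R^{(\ij)}(u)=(u-hP^{(\ij)})/(u-h)$, which satisfy $R^{(\ij)}(hu)=\tilde R^{(\ij)}(-u)$ up to the sign conventions, matching the arguments $z_a-z_i=-h(x_a-x_i)$ and $z_a-z_i-\kp$ against $x_a-x_i-\kp$ after rescaling $\kp$. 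I would verify that $K_i(\zz;\qq;h;\kp)=K_i^{V_\bla}(-\zz/h;\qq;-\kp/h)$ under this substitution, and check that the abelian-monomial part $\prod q_k^{-e^{(i)}_{k,k}}$ is identical in both. The prefactors in the definition of $F$ depend only on $\qq$ and on $\kk$, hence commute with the $z_i$-shift operator (which acts only on the $\zz$-dependence), so they contribute a multiplicative constant that is the same on both sides of the flatness equation and cancels.

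\emph{Main obstacle.} The genuinely delicate point is part (ii): matching the two $R$-matrix normalizations and, above all, reconciling the shift directions and the factor of $-1/h$ relating $\kp$ to $-\kp/h$, so that $x_i\mapsto x_i+(-\kp/h)$ translates correctly into $z_i\mapsto z_i+\kp$. I would track these sign and scaling conventions carefully, confirming that the two families of operators agree identically after substitution rather than merely being conjugate, and that the $\qq,\kk$-dependent prefactors drop out cleanly from the $\zz$-shift equations.
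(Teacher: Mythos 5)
Your proposal is correct and follows exactly the paper's route: the paper's own proof consists of the two sentences "Part (i) follows from formulas \Ref{gauge} and \Ref{HH}; part (ii) follows from the formulas for the operators $\Kh_{\xx,\qq,-\kp/h,i}^{V_\bla}$ and $\Kh_{\zz,\qq,\kp,i}$," and your write-up simply fills in the same three ingredients (the $\xx=-\zz/h$ substitution with \Ref{HH}, absorption of the scalar $\tfrac h2\la_i$ by $\prod_i q_i^{-h\la_i/(2\kp)}$, and the gauge factor $\Ups_\bla^{-1}$ acting as $\prod_{i<j}(1-q_j/q_i)^{-h\min(\la_i,\la_j)/\kp}$ on $V_\bla$) together with the $R$-matrix identity $R^{(\ij)}(-hv)=\tilde R^{(\ij)}(v)$ for part (ii). All sign and scaling checks you flag as delicate do work out as you describe.
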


\begin{proof} Part (i) follows from formulas \Ref{gauge} and \Ref{HH}.
Part (ii) follows from formulas for the operators $\Kh_{\xx,\qq,-\kp/h,i}^{V_\bla}$
and $\Kh_{\zz,\qq,\kp,i}$.
\end{proof}

\begin{lem}
\label{V quant}
Assume
$F(\zz;\tilde \qq,h)\,=\,\sum_{I\in\Il}F_I(\zz;\tilde \qq,h)\,v_I\ \in V_\bla$
is a flat section of the connection $\pho(\nabla_{\bla,\tilde q^{-1}_1,\dots,\tilde q^{-1}_N,-\kp})$
(resp. of the difference connection defined by the operators $\Kh_{\zz,\tilde q^{-1}_1,\dots,\tilde q^{-1}_N,-\kp,1}\lc\Kh_{\zz,\tilde q^{-1}_1,\dots,\tilde q^{-1}_N,-\kp,n}$). Then
\bean
\phantom{aaa}
\St_{\on{id}} (F(\zz;\tilde\qq,h))=\sum_{I\in\Il} F_I(\zz;\tilde\qq;h) \,\St_{\on{id},I} = \sum_{I\in\Il} F_I(\zz;\tilde\qq;h) \,\frac{[W_{\on{id},I}(\bs\Theta;\zz;h)]}{c_\bla(\bs\Theta)}
\eean
is a flat section of the quantum connection $\nabla_{\on{quant},\bla,\tilde\qq,\kp}$
(resp. of the difference connection defined by the operators $\St_{\on{id}}\circ \Kh_{\zz,\tilde q^{-1}_1,\dots,\tilde q^{-1}_N,-\kp,1}
 \circ \nu,\dots,$ $\St_{\on{id}}\circ\Kh_{\zz,\tilde q^{-1}_1,\dots,\tilde q^{-1}_N,-\kp,n} \circ\nu$).

\end{lem}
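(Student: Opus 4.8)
The plan is to obtain both parts of the lemma from three facts already in place: that $\St_{\on{id}}$ is a homomorphism of $\ty$-modules (Theorem \ref{thm stab yang}), that $\nu$ is its two-sided inverse (Lemma \ref{lem nu stab = 1}), and that the quantum connection is the $\rho$-image of the trigonometric dynamical connection (formula \Ref{quanT}). The structural observation that makes the argument run is that $\St_{\on{id}}$ is assembled from the weight functions $W_{\on{id},I}(\bs\Theta;\zz;h)$ and hence depends only on $\zz$ and $h$; in particular it is independent of $\tilde\qq$, and therefore commutes, as an operator on $\tilde\qq$-dependent functions, both with multiplication by the $\tilde q_i$ and with the scalar differential operators $\kp\,\tilde q_i\,\der/\der\tilde q_i$.

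For the differential assertion, recall that $\pho(\nabla_{\bla,\tilde q_1^{-1},\dots,\tilde q_N^{-1},-\kp,i})$ is the difference of a scalar $\tilde\qq$-derivative term and the operator $\pho(X^q_{\bla,i})$, with the analogous description on $H^*_T(T^*\F_\bla)$ obtained by replacing $\pho$ with $\rho$. Since $X^q_{\bla,i}\in\ty$, Theorem \ref{thm stab yang} gives $\St_{\on{id}}\circ\pho(X^q_{\bla,i})=\rho(X^q_{\bla,i})\circ\St_{\on{id}}$, while the $\tilde\qq$-independence of $\St_{\on{id}}$ lets the derivative term pass through it unchanged. Combining these yields
\bea
\St_{\on{id}}\circ\pho(\nabla_{\bla,\tilde q_1^{-1},\dots,\tilde q_N^{-1},-\kp,i})
&=&\rho(\nabla_{\bla,\tilde q_1^{-1},\dots,\tilde q_N^{-1},-\kp,i})\circ\St_{\on{id}}\,.
\eea
Applying the left-hand side to the flat section $F$ gives zero, and rewriting $\rho(\nabla_{\bla,\tilde q_1^{-1},\dots,\tilde q_N^{-1},-\kp,i})$ as $\nabla_{\on{quant},\bla,\tilde\qq,\kp,i}$ through \Ref{quanT} shows that $\St_{\on{id}}(F)$ is annihilated by the quantum connection, as wanted.

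For the difference assertion, abbreviate $\Kh_i=\Kh_{\zz,\tilde q_1^{-1},\dots,\tilde q_N^{-1},-\kp,i}$; by definition the cohomological difference connection is the conjugate $\St_{\on{id}}\circ\Kh_i\circ\nu$. Using $\nu\circ\St_{\on{id}}=\id$ from Lemma \ref{lem nu stab = 1} and the flatness $\Kh_i\,F=F$ of $F$, one computes
\bea
\bigl(\St_{\on{id}}\circ\Kh_i\circ\nu\bigr)\bigl(\St_{\on{id}}(F)\bigr)
&=&\St_{\on{id}}\bigl(\Kh_i\,\nu\,\St_{\on{id}}(F)\bigr)
\\
&=&\St_{\on{id}}\bigl(\Kh_i\,F\bigr)\,=\,\St_{\on{id}}(F)\,.
\eea
Hence $\St_{\on{id}}(F)$ is fixed by every operator of the cohomological difference connection, i.e.\ it is a flat section.

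Neither computation hides any analytic or combinatorial difficulty; the argument is purely formal once the three cited inputs are invoked. The one point that demands attention is the bookkeeping of the parameter changes $q_j\mapsto\tilde q_j^{-1}$ and $\kp\mapsto-\kp$: one must check that under them the derivative term $-\kp\,q_i\,\der/\der q_i$ of $\nabla_{\bla,\qq,\kp,i}$ becomes exactly the term $\kp\,\tilde q_i\,\der/\der\tilde q_i$ of $\nabla_{\on{quant},\bla,\tilde\qq,\kp,i}$, so that \Ref{quanT} may be applied verbatim. This matching is precisely what \Ref{quanT} records, and the $\tilde\qq$-independence of $\St_{\on{id}}$ guarantees that the scalar operator commutes through $\St_{\on{id}}$ without change.
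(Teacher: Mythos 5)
Your proposal is correct and follows essentially the same route as the paper, whose entire proof reads ``The first statement follows from Corollary \ref{cor qm = dh}. The second statement is obvious.'' You have simply made explicit the ingredients the paper leaves implicit: the intertwining property of $\St_{\on{id}}$ from Theorem \ref{thm stab yang}, the $\tilde\qq$-independence of $\St_{\on{id}}$, formula \Ref{quanT}, and, for the difference part, the identity $\nu\circ\St_{\on{id}}=\id$ from Lemma \ref{lem nu stab = 1}.
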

\begin{proof} The first statement follows from Corollary \ref{cor qm = dh}. The second statement of obvious.
\end{proof}

\subsection{Trigonometric KZ connection on $W_\bla$} Consider the Lie algebra $\glnn$.
For $m\in\Z_{\geq 0}$, we denote by $W_m^{(n)}$ the irreducible $\glnn$-module with highest weight $(m,0,\dots,0)$
and highest weight vector $w_m$. For given $\bla=(\la_1,\dots,\la_N)$, $|\bla|=\la_1+\dots+\la_N=n$, we consider the $\glnn$-module
$\otimes_{i=1}^NW_{\la_i}^{(n)}$. We denote by $W_\bla$ the weight subspace of $\otimes_{i=1}^NW_{\la_i}^{(n)}$ of
$\glnn$-weight $(1,\dots,1)$. We consider the trivial bundle $\pi_{W_\bla} : W_\bla \times \C^{n+N} \to \C^{n+N}$.

Let $\Omega_0=\frac 12\sum_{a=1}^n e_{a,a}\otimes e_{a,a}$, $\Omega_+ =\Omega_0+\sum_{1\leq a<b\leq n} e_{a,b}\otimes e_{b,a},$
$\Omega_- =\Omega_0+\sum_{1\leq a<b\leq n} e_{b,a}\otimes e_{a,b}.$
Introduce the trigonometric \KZ/ operators $X^{W_\bla}_1,\dots,X^{W_\bla}_N$ acting on $W_\bla$-valued functions of $\xx,\qq$,
\bea
X^{W_\bla}_i(\xx;\qq)=\sum_{a=1}^n(x_a- \frac {e_{a,a}}2) e_{a,a}^{(i)}
+ \sum_{\satop{j=1}{j\ne i}}^N
\frac{q_i\Omega_+^{(i,j)} + q_j\Omega_-^{(i,j)}}{q_i-q_j}\,.
\eea
For $\kappa\in\C^\times$, introduce the differential operators
$\nabla_{\xx,\qq,\kp,1}^{W_\bla},\dots, \nabla_{\xx,\qq,\kp,N}^{W_\bla}$,
\bean
\label{kz}
\nabla_{\xx,\qq,\kp,i}^{W_\bla} = \kappa q_i\frac\der{\der q_i} - X^{W_\bla}_i(\xx;\qq)\,.
\eean
The differential operators define on $\pi_{W_\bla}$ a flat connection
$\nabla_{\xx,\qq,\kp}^{W_\bla}$ called the trigonometric KZ connection.

\subsection{Rational dynamical difference connection on $\pi_{W_\bla}$}

For any $a,b=1,\dots,n$, $a\ne b$, \>introduce a series $B_{a,b}(t)$ depending
on a variable $t$,
\bea
B_{a,b}(t) = 1+\sum_{s=1}^\8 (e_{b,a})^s(e_{a,b})^s \prod_{j=1}^s\frac 1{j(t-e_{a,a}+e_{b,b}-j)}\,.
\eea
The series acts on any finite-dimensional $\frak{gl}_n$-module $W$.

For $\kp\in\C^\times$, introduce the operators
$K^{W_\bla}_1,\dots,K^{W_\bla}_n$ acting on $W_\bla$-valued functions
\vvn.2>
of $\xx,\qq$,
\begin{align*}
K^{W_\bla}_i(\xx;\qq;\kp)\,&{}=\,
\big(B_{i,n}(x_i\<-x_{n})\,\dots\,B_{i,i+1}(x_i\<-x_{i+1})\big)^{-1}\times{}
\\[2pt]
&{}\>\times\,q_1^{-e_{\ii}^{(1)}}\!\dots\,q_N^{-e_{\ii}^{(N)}}\,
B_{1,i}(x_1-x_i-\kp)\,\dots\, B_{i-1,i}(x_{i-1}-x_i-\kp)\,.
\\[-14pt]
\end{align*}
Introduce the difference operators
$\Kh^{W_\bla}_{\xx,\qq,\kp,1},\dots,\Kh^{W_\bla}_{\xx,\qq,\kp,n}$\,,
\vvn.3>
\beq
\label{rdcw}
\Kh^{W_\bla}_{\xx,\qq,\kp,i} F(\xx,\qq)\,=\,
K^{W_\bla}_i(\xx,\qq,\kp)\,F(x_1,\dots,x_{i-1},x_i+\kp,x_{i+1},\dots,x_n;\qq).
\vv.2>
\eeq

\begin{thm}[\cite{TV3}]
\label{RDD}
The operators $\nabla_{\xx,\qq,\kp,1}^{W_\bla},\dots, \nabla_{\xx,\qq,\kp,N}^{W_\bla}$\,,
\,$\Kh^{W_\bla}_{\xx,\qq,\kp,1},\dots,\Kh^{W_\bla}_{\xx,\qq,\kp,n}$
pairwise commute.
\end{thm}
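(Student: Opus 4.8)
The plan is to separate the assertion into three families of commutators. Write $\nabla_i=\nabla^{W_\bla}_{\xx,\qq,\kp,i}$ for $i=1\lc N$ and $\Kh_j=\Kh^{W_\bla}_{\xx,\qq,\kp,j}$ for $j=1\lc n$. We must establish $[\nabla_i,\nabla_{i'}]=0$, then $[\Kh_j,\Kh_{j'}]=0$, and finally the mixed relations $[\nabla_i,\Kh_j]=0$, and I would take them in this order.

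For the differential family I would expand, using that $X^{W_\bla}_i$ is independent of $\kp$,
\[
[\nabla_i,\nabla_{i'}]=[X^{W_\bla}_i,X^{W_\bla}_{i'}]-\kp\Bigl(q_i\tfrac{\der}{\der q_i}X^{W_\bla}_{i'}-q_{i'}\tfrac{\der}{\der q_{i'}}X^{W_\bla}_i\Bigr).
\]
Since this is affine in $\kp$ and must vanish for every $\kp\in\C^\times$, it is equivalent to the two identities $[X^{W_\bla}_i,X^{W_\bla}_{i'}]=0$ and $q_i\tfrac{\der}{\der q_i}X^{W_\bla}_{i'}=q_{i'}\tfrac{\der}{\der q_{i'}}X^{W_\bla}_i$. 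The integrability identity is a short direct computation, since only the summands rational in $\qq$ depend on $\qq$ and they are exchanged by the symmetry $\Omega_+^{(i,i')}=\Omega_-^{(i',i)}$. The commutation $[X^{W_\bla}_i,X^{W_\bla}_{i'}]=0$ is the flatness of the trigonometric KZ connection and reduces to the classical Yang--Baxter equation for the trigonometric $r$-matrix $r^{(i,i')}=(q_i\Omega_+^{(i,i')}+q_{i'}\Omega_-^{(i,i')})/(q_i-q_{i'})$, together with the $\glnn$-invariance of the symmetric Casimir $\Omega_++\Omega_-$; the diagonal correction $\sum_a(x_a-e_{a,a}/2)e^{(i)}_{a,a}$ contributes only brackets that cancel in pairs, which I would verify by a direct bracket computation.

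For the difference family, the commutativity $[\Kh_j,\Kh_{j'}]=0$ follows from the relations satisfied by the series $B_{a,b}$. On the \fd/ module $W_\bla$ each $B_{a,b}(t)$ terminates on every weight subspace and defines a rational operator, and the unitarity and dynamical hexagon identities for the $B_{a,b}$, together with the way they braid with the diagonal factors $q_1^{-e^{(1)}_{\ii}}\!\dots q_N^{-e^{(N)}_{\ii}}$, yield the commutation of the shifted operators exactly as in the consistency of a rational dynamical difference equation.

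The mixed relations $[\nabla_i,\Kh_j]=0$ are the crux, and this is the step I expect to be the main obstacle: a first-order differential operator in $\qq$ must commute with a $\kp$-shift operator in $\xx$, so one has to match $q_i\tfrac{\der}{\der q_i}X^{W_\bla}_i$ against the change of $K^{W_\bla}_j$ under $x_j\mapsto x_j+\kp$, controlling both the infinite series $B_{a,b}$ and the noncommutativity of the Cartan factors $q_\ell^{-e^{(\ell)}_{\ii}}$ with the differential part. I would attack this either by propagating the shift directly through the ordered product of $B$-factors, or---more in keeping with this paper---by transporting the already known compatibility of Theorem \ref{thm qkz2}: by the $(\gln,\glnn)$-duality of \cite{TV4} the trigonometric KZ connection on $W_\bla$ is identified with the trigonometric dynamical connection on $V_\bla$ and the rational dynamical difference connection with the rational \qKZ/ connection, so the mixed commutativity here is the image of the mixed commutativity there. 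In either route the genuinely delicate point is handling the series $B_{a,b}$ and the $\xx$-shift simultaneously, which is precisely what is carried out in \cite{TV3}.
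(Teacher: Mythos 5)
The paper does not prove this theorem at all: it is quoted verbatim from \cite{TV3}, so there is no internal argument to compare yours against. Judged on its own terms, your decomposition into the three families of commutators is the right skeleton, and your treatment of the purely differential family is correct and complete: the bracket $[\nabla_i,\nabla_{i'}]$ is indeed affine in $\kp$, the cross-derivative identity $q_i\tfrac{\der}{\der q_i}X^{W_\bla}_{i'}=q_{i'}\tfrac{\der}{\der q_{i'}}X^{W_\bla}_i$ follows from $\Omega_+^{(i,i')}+\Omega_-^{(i,i')}$ being symmetric under the flip of factors, and $[X^{W_\bla}_i,X^{W_\bla}_{i'}]=0$ is the standard flatness of the trigonometric KZ connection.

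However, the other two families are where essentially all of the content of the theorem lives, and your proposal does not actually prove them. For $[\Kh_j,\Kh_{j'}]=0$ you invoke ``unitarity and dynamical hexagon identities'' for the operators $B_{a,b}(t)$ without stating or verifying them; these identities (in particular the zero-weight relation $B_{a,b}(t)\,B_{b,a}(-t)=1$ on suitable weight spaces and the braiding of the $B$'s with the Cartan factors $q_\ell^{-e_{\ii}^{(\ell)}}$) are precisely the nontrivial results established in \cite{TV3}, so at this point the argument is circular rather than a proof. For the mixed commutators you candidly defer either to \cite{TV3} again or to transport along the duality of Theorem \ref{kz dyn thm 2}. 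The latter route is internally consistent with this paper's framework (the scalar gauge factor $Q_j$ depends only on $\xx$ and $\kp$, hence commutes with $\nabla^{W_\bla}_{\xx,\qq,\kp,i}$, so $[\nabla^{W_\bla}_i,\Kh^{W_\bla}_j]=0$ does follow from $[\nabla^{V_\bla}_i,\Kh^{V_\bla}_j]=0$), but one should be aware that the compatibility statements of \cite{TV4} are logically downstream of \cite{TV3}, so this does not yield an independent proof. In short: your outline is a faithful roadmap of how the result is organized, but as written it reduces the theorem to exactly the unproved assertions that constitute it.
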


The commuting difference operators
\,$\Kh_{\xx,\qq,\kp,1}^{W_\bla}\lc\Kh_{\xx,\qq,\kp,n}^{W_\bla}$ define
the rational dynamical difference connection on $\pi_{W_\bla}$.
Theorem \ref{RDD} says that the rational dynamical difference connection
commutes with the trigonometric \KZ/ connection $\nabla_{\xx,\qq,\kp}^{W_\bla}$.

\subsection{Equivalence of connections on $\pi_{V_\bla}$ and $\pi_{W_\bla}$ }
\medskip
For $I=(I_1,\dots,I_N)\in\Il$, define a vector $w_I\in W_\bla$ by the formula
\bean
w_I=w_{I_1}\otimes \dots\otimes w_{I_N}, \qquad
w_{I_a}= \big(\prod_{i\in I_a}'e_{i,1}\big)\, w_{\la_a},
\eean
where $\prod'$ means that we exclude from the product the factor $e_{1,1}$ if $1\in I_a$.
The map
\bean
\mu : W_\bla \to V_\bla, \qquad w_I\mapsto v_I,
\eean
is a vector isomorphism, see for example \cite{TV4}.

\begin{thm} [Theorem 5.8 in \cite{TV4}]
\label{kz dyn thm 2}
The vector isomorphism $\mu$ identifies the action of operators
$\nabla_{\xx,\qq,\kp,1}^{V_\bla},\dots,\nabla_{\xx,\qq,\kp,n}^{V_\bla}$,
$\Kh^{V_\bla}_{\xx,\qq,\kp,1},\dots,\Kh^{V_\bla}_{\xx,\qq,\kp,n}$ with the action of
the operators $\nabla_{\xx,\qq,\kp,1}^{W_\bla},$ \dots, $\nabla_{\xx,\qq,\kp,n}^{W_\bla}$,
$Q_1\Kh^{W_\bla}_{\xx,\qq,\kp,1},\dots, Q_n\Kh^{W_\bla}_{\xx,\qq,\kp,n}$,
respectively, where
\be
Q_i\,=\,\prod_{1\leq j<i}\frac{x_j-x_i - 1-\kp}{x_j-x_i+1-\kp}
\prod_{i<j\leq n}\frac{x_i-x_j + 1}{x_i-x_j-1} .
\ee
\end{thm}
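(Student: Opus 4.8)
The plan is to derive the theorem from the $(\glN,\glnn)$ Howe duality, realizing $V_\bla$ and $W_\bla$ as one and the same subspace of a common $\glN\times\glnn$-bimodule. Concretely, I would work in the polynomial algebra $P=\C[x_{i,a}]_{1\le i\le N,\,1\le a\le n}$, on which $\glN$ acts by $E_{i,j}=\sum_{a=1}^n x_{i,a}\der_{x_{j,a}}$ and the commuting $\glnn$ acts by $\mathcal E_{a,b}=\sum_{i=1}^N x_{i,a}\der_{x_{i,b}}$. The subspace of $P$ cut out by $\glN$-weight $\bla$ together with $\glnn$-weight $(1,\dots,1)$ is spanned by the monomials $\prod_{a=1}^n x_{i_a,a}$ with $|\{a:i_a=i\}|=\la_i$; reading off the row index in each column identifies this space with $V_\bla\subset(\C^N)^{\otimes n}$, while reading off the symmetric monomial in each row block identifies it with $W_\bla\subset\otimes_{i=1}^N W_{\la_i}^{(n)}$. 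The vector isomorphism $\mu$ is exactly this double identification $w_I\leftrightarrow\prod_a x_{i_a,a}\leftrightarrow v_I$, and under it the permutation $P^{(a,b)}$ of tensor factors on the $V$-side becomes the interchange of columns $a,b$, while the global $\glnn$-action on the $W$-side becomes $\mathcal E_{a,b}$. After fixing this dictionary the whole theorem reduces to matrix identities of rational functions of $\xx,\qq$ in the basis $\{v_I\}=\{w_I\}$, which it suffices to verify on the generating operators of each connection.

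For the differential half I would match $\nabla^{V_\bla}_{\xx,\qq,\kp,i}$ with $\nabla^{W_\bla}_{\xx,\qq,\kp,i}$ termwise for $i=1,\dots,N$. Both connections differentiate in the same variables $q_i$, so it is enough to identify the Hamiltonians $X^{V_\bla}_i$ and $X^{W_\bla}_i$. The linear-in-$\xx$ parts agree because the bidegree operator $x_{i,a}\der_{x_{i,a}}$ is simultaneously the restriction of the $\glN$-Cartan piece $e^{(a)}_{\ii}$ (acting in the $a$-th tensor slot of $(\C^N)^{\otimes n}$) and of the $\glnn$-Cartan piece $e^{(i)}_{a,a}$ (acting in the $i$-th factor of $\otimes W_{\la_i}^{(n)}$), whence $\sum_a x_a\>e^{(a)}_{\ii}=\sum_a x_a\>e^{(i)}_{a,a}$ on the overlap. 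The $\qq$-dependent pieces match because the off-diagonal $\glN$ element $G_{\ij}$ and the off-diagonal parts of the $\glnn$ elements $\Omega^{(i,j)}_\pm$ are, up to diagonal Cartan terms, two presentations of the same off-diagonal part of the quadratic Casimir of $\glN\times\glnn$ on the overlap. Matching the coefficient $\tfrac{q_j}{q_i-q_j}$ of $G_{\ij}$ against the $\qq$-rational coefficients of $\Omega^{(i,j)}_\pm$, and reconciling the leftover diagonal pieces with the $-\tfrac12 e_{\ii}^2$ term on the $V$-side and the $-\tfrac12\sum_a e_{a,a}e^{(i)}_{a,a}$ term on the $W$-side, yields $X^{V_\bla}_i=X^{W_\bla}_i$ under $\mu$. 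This is the trigonometric dynamical $\leftrightarrow$ trigonometric \KZ/ correspondence and is a clean Casimir computation.

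The difference half, matching $\Kh^{V_\bla}_{\xx,\qq,\kp,i}$ with $Q_i\,\Kh^{W_\bla}_{\xx,\qq,\kp,i}$ for $i=1,\dots,n$, is where the real work lies. Both operators implement the same shift $x_i\mapsto x_i+\kp$ and carry the same $\qq$-twist $q_1^{-e^{(i)}_{1,1}}\!\cdots q_N^{-e^{(i)}_{N,N}}$, so the content is an identity of transport matrices: the ordered product of rational $\glN$ $R$-matrices $\tilde R^{(a,i)}$ on the $V$-side must equal, after the factor $Q_i$, the ordered product of rational $\glnn$ dynamical operators $B_{a,i}^{\,\pm1}$ on the $W$-side (with $B^{-1}$ for the factors $a>i$ and $B$ for the factors $a<i$, exactly mirroring the placement of the twist). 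I would establish this from the pairwise bispectral identity $\mu^{-1}\tilde R^{(a,b)}(u)\,\mu=(\text{scalar})\cdot B_{a,b}(u)^{\pm1}$ relating the Yang $R$-matrix $\tfrac{u+P^{(a,b)}}{u+1}$ to the $\glnn$ rational dynamical operator $B_{a,b}(u)$ on the two-column overlap, and then propagate it through the ordered products using the commutation of non-overlapping factors. The scalar correction $Q_i$ is pinned down by comparing the commutative (highest-weight) parts of the two transport matrices: it is the product over $j$ of the ratios between the $R$-matrix normalizations $u+1$ and the scalar parts of $B_{a,b}$, evaluated at the shifted arguments $x_j-x_i-\kp$ for $j<i$ and at $x_i-x_j$ for $j>i$, which reproduces $Q_i=\prod_{j<i}\tfrac{x_j-x_i-1-\kp}{x_j-x_i+1-\kp}\prod_{i<j}\tfrac{x_i-x_j+1}{x_i-x_j-1}$. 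The main obstacle is controlling these normalizations and orderings: $B_{a,b}(t)$ is an infinite series in $(e_{b,a})^s(e_{a,b})^s$, so tracking the denominators $u+1$ through the product while extracting the precise scalar $Q_i$ requires the explicit bispectral correspondence rather than a formal symmetry argument.
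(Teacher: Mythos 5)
The paper does not prove this theorem at all: it is quoted verbatim as Theorem 5.8 of [TV4], so the only ``proof'' here is the citation. Your framework --- realizing $V_\bla$ and $W_\bla$ as the common weight subspace of $\C[x_{i,a}]$ under the commuting $\gl_N$ and $\glnn$ actions, so that $\mu$ becomes the identity on that subspace --- is exactly the $(\gl_N,\glnn)$-duality setup of [TV4], and your reduction of the differential half to a Casimir bookkeeping does close up (e.g.\ the $j=i$ term of $\sum_j\sum_{a<b}e^{(a)}_{\ij}e^{(b)}_{\ji}$ contributes $\binom{\la_i}{2}$, which together with $-\tfrac12 e_{\ii}^2=-\tfrac12\la_i^2$ reproduces the $-\tfrac12\sum_a e_{a,a}e^{(i)}_{a,a}=-\tfrac12\la_i$ on the $W$-side). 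For the difference half you correctly identify that everything hinges on the single two-column identity $\mu^{-1}\tilde R^{(a,b)}(u)\,\mu = (\text{scalar})\cdot B_{a,b}(u)^{\pm1}$ together with the extraction of the scalar $Q_i$, but you only assert this identity; since it is the entire nontrivial content of the theorem (it is the main computation of [TV4]), your proposal is a faithful reconstruction of the cited argument's architecture rather than a complete proof.
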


\begin{cor}
\label{cor kz dyn }
If a $W_\bla$-valued function $F^W(\xx;\qq)=\sum_{I\in\Il}F^W_I(\xx;\qq)w_I$
satisfies the equations
\be
\nabla_{\xx,\qq,\kp,1}^{W_\bla}F^W =0,\quad \dots \quad \nabla_{\xx,\qq,\kp,N}^{W_\bla}F^W=0,
\quad
\Kh^{W_\bla}_{\xx,\qq,\kp,1}F^W=F^W,\quad \dots \quad\Kh^{W_\bla}_{\xx,\qq,\kp,n}F^W=F^W ,
\ee
then the $V_\bla$-valued function
\be
F^V(\xx;\qq)= \prod_{1\leq i<j\leq n} \frac{\Gamma((x_i-x_j-1)/\kp)}{\Gamma((x_i-x_j+1)/\kp)} \sum_{I\in\Il} F^W_I(\xx;\qq)\,v_I
\ee
satisfies the equations
\be
\nabla_{\xx,\qq,\kp,1}^{V_\bla}F^V =0,\quad \dots \quad \nabla_{\xx,\qq,\kp,N}^{V_\bla}F^V =0,
\quad
\Kh^{V_\bla}_{\xx,\qq,\kp,1}F^V=F^V,\quad \dots \quad\Kh^{V_\bla}_{\xx,\qq,\kp,n}F^V=F^V .
\ee
\end{cor}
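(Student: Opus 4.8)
The plan is to derive the corollary from Theorem \ref{kz dyn thm 2} by treating the differential and the difference equations separately and absorbing the scalar Gamma-factor into the difference part. Write $g(\xx)=\prod_{1\le i<j\le n}\Gamma((x_i-x_j-1)/\kp)/\Gamma((x_i-x_j+1)/\kp)$ for the scalar prefactor, extend $\mu$ to $W_\bla$-valued functions by letting it act on values, and observe that $F^V=g(\xx)\,\mu(F^W)$. Since $g$ is scalar and independent of $\qq$, and since $\mu$ is a constant linear isomorphism, both commute freely with all the matrix-valued and $\qq$-differential pieces of the operators involved; the entire argument is the interplay of these commutations with one scalar functional equation for $g$.

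For the differential equations, Theorem \ref{kz dyn thm 2} gives $\nabla_{\xx,\qq,\kp,i}^{V_\bla}\,\mu(F^W)=\mu\bigl(\nabla_{\xx,\qq,\kp,i}^{W_\bla}F^W\bigr)=0$. Because $\nabla_{\xx,\qq,\kp,i}^{V_\bla}=\kp\,q_i\,\frac{\der}{\der q_i}-X_i^{V_\bla}$ differentiates only in $\qq$ and otherwise acts by multiplication by matrix-valued functions of $\xx,\qq$, the scalar $g(\xx)$ passes through it, so $\nabla_{\xx,\qq,\kp,i}^{V_\bla}F^V=g(\xx)\,\nabla_{\xx,\qq,\kp,i}^{V_\bla}\mu(F^W)=0$ for $i=1\lc N$.

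For the difference equations, Theorem \ref{kz dyn thm 2} identifies $\Kh_{\xx,\qq,\kp,i}^{V_\bla}$ with $Q_i\,\Kh_{\xx,\qq,\kp,i}^{W_\bla}$, where $Q_i$ is scalar; using $\Kh_{\xx,\qq,\kp,i}^{W_\bla}F^W=F^W$ this yields $\Kh_{\xx,\qq,\kp,i}^{V_\bla}\mu(F^W)=Q_i(\xx)\,\mu(F^W)$. Since $\Kh_{\xx,\qq,\kp,i}^{V_\bla}$ acts, by \Ref{bar K_i}, by first shifting $x_i\mapsto x_i+\kp$ and then multiplying by a matrix, peeling off the scalar $g$ gives $\Kh_{\xx,\qq,\kp,i}^{V_\bla}F^V=g(x_1\lc x_i+\kp\lc x_n)\,Q_i(\xx)\,\mu(F^W)$. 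Hence $\Kh_{\xx,\qq,\kp,i}^{V_\bla}F^V=F^V$ reduces to the single scalar identity $g(x_1\lc x_i+\kp\lc x_n)\,Q_i(\xx)=g(\xx)$.

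The last step is the one piece of genuine computation, and the main obstacle. Shifting $x_i\mapsto x_i+\kp$ affects only the Gamma-factors of $g$ containing $x_i$; applying $\Gamma(z+1)=z\,\Gamma(z)$ to the factors with the other index $>i$ and $\Gamma(z-1)=\Gamma(z)/(z-1)$ to those with index $<i$ collapses all Gamma-quotients to $\prod_{j<i}\frac{x_j-x_i+1-\kp}{x_j-x_i-1-\kp}\prod_{j>i}\frac{x_i-x_j-1}{x_i-x_j+1}$, which is precisely $1/Q_i(\xx)$ by the formula for $Q_i$ in Theorem \ref{kz dyn thm 2}. This establishes $g(x_1\lc x_i+\kp\lc x_n)/g(\xx)=1/Q_i(\xx)$ and finishes the proof. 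The only real care needed is bookkeeping: tracking which Gamma-arguments increase and which decrease under the shift, and staying consistent with the shift-then-multiply convention of \Ref{bar K_i} and with the placement of the scalar $Q_i$ relative to the shift. Everything else is formal, resting on the centrality of scalars among the matrix operators and on the $\qq$-independence of $g$.
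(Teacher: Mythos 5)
Your proof is correct and follows exactly the route the paper intends: the corollary is stated as an immediate consequence of Theorem \ref{kz dyn thm 2}, and your argument supplies the implicit details — the scalar Gamma-prefactor commutes with the $\qq$-differential operators, and its quotient under the shift $x_i\mapsto x_i+\kp$ equals $1/Q_i(\xx)$, cancelling the scalar factors in the identification of the difference operators. The Gamma-function computation checks out.
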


\section{Integral representations for flat sections}

\subsection{Master function}

Consider $\C^{\frac{n(n-1)}2}$ with coordinates $\ss =(s^{(a)}_j)$, $a=1,\dots,n-1$,
$j= 1,\dots,n-a$.
Consider $\C^n$ with coordinates $\xx=(x_1,\dots,x_n)$
and $\C^N$ with coordinates $\qq=(q_1,\dots,q_N)$.
The master function is
\vvn-.4>
\begin{align*}
\Phi_\bla(\ss;\xx;\qq)&{}=\,
\prod_{1\leq i<j\leq N}(q_i-q_j)^{\la_i\la_j}\,
\prod_{i=1}^N\,q_i^{\la_i(x_1-1+\la_i/2)}\,
\prod_{j=1}^{n-1}\,\prod_{i=1}^N\,(s^{(1)}_j-q_i)^{-\la_i}
\times
\\[4pt]
&{}\times\,\prod_{a=1}^{n-1}\,\prod_{i=1}^{n-a}\,(s^{(a)}_i)^{x_{a+1}-x_a+1}\,
\prod_{a=1}^{n-2}\big(\prod_{1\leq i<j\leq n-a} (s^{(a)}_i-s^{(a)}_j)^2\,
\prod_{i=1}^{n-a}\,\prod_{j=1}^{n-a-1}\!(s^{(a)}_i-s^{(a+1)}_j)^{-1}\big)\,,
\end{align*}
see \cite[Formula (16)]{MV}.

\subsection{$\frak{gl}_n$ weight functions}

For $I\in \Il$ we define the weight function $\omega_I(\ss;\qq)$, see \cite{MV, RSV, SV}.
Introduce new variables $\hat\ss= (s_{a,i,j})$, where
$a\in\{1,\dots,N\}$, $i\in I_a$, $j\in\{1,\dots,i-1\}$. For $k=1,\dots,n-1$, denote by
${\hat\ss}_k$ the set of all variables $s_{a,i,j}$ with $j=k$. Then
$|{\hat\ss}_k|=n-k$.

Let $B$ be the set of sequences \,$\bs\beta=(\beta_1,\dots,\beta_{n-1})$
of bijections \,$\beta_k:\hat\ss_k\to\{s^{(k)}_1,\dots,s^{(k)}_{n-k}\}$, \>$k=1,\dots,n-1$.

If $f(\hat\ss)$ is a function of $\hat \ss$ and $\bs\beta\in B$, then we obtain the function $(\bs\beta f)(\ss)$
of $\ss$ by replacing variables in $f(\hat\ss)$ according to the bijection $\bs\beta$.

Introduce the function
\bea
f_I(\hat\ss;\qq) =\prod_{a=1}^N \prod_{i\in I_a} \frac {1} {(s_{a,i,1}-q_a)(s_{a,i,2}-s_{a,i,1})\dots(s_{a,i,i-1}-s_{a,i,i-2})}\,,
\eea
where we set $\frac {1}{(s_{a,i,1}-q_a)(s_{a,i,2}-s_{a,i,1})\dots(s_{a,i,i-1}-s_{a,i,i-2})}=1$
if $i=1$. We define
\bea
\omega_I(\ss;\qq) =
\sum_{\bs\beta\in B}\,(\bs\beta f_I)(\ss;\qq) .
\eea

\begin{example}
Let $N=2$, $n=3$, $\bla=(1,2)$, $I=(I_1,I_2)$, $I_1=\{2\}, I_2=\{1,3\}$. Then
\bea
\omega_I(\ss;\qq) = \frac {1}{(s^{(1)}_1-q_1)(s^{(1)}_2-q_2)(s^{(2)}_1-s^{(1)}_2)}
+
\frac {1}{(s^{(1)}_2-q_1)(s^{(1)}_1-q_2)(s^{(2)}_1-s^{(1)}_1)}.
\eea
\end{example}

\subsection{Hypergeometric integrals}
\label{HINT}
In the space $\C^{\frac{n(n-1)}2}\times\C^n\times\C^N$
with coordinates $\ss,\xx,\qq$ we consider the arrangement ${\A}$ of hyperplanes
defined by equations
\bea
&&
s^{(a)}_i=0, \qquad a=1,\dots,n-1, \quad i=1,\dots,n-a;
\\
&&
s^{(1)}_j-q_i=0,\qquad j=1,\dots,n-1, \quad i=1,\dots,N;
\\
&&
s^{(a)}_i-s^{(a)}_j=0,\qquad a=1,\dots,n-1,\quad 1\leq i<j\leq n-a;
\\
&&
s^{(a)}_i-s^{(a+1)}_j=0, \qquad a=1,\dots,n-2, \quad i=1,\dots,n-a,\quad j=1,\dots,n-a-1.
\\
&&
q_i-q_j=0,\qquad 1\leq i<j\leq N;
\\
&&
q_i=0,\qquad i=1,\dots,N .
\eea
Denote by $ U$ the complement to the union of hyperplanes of the arrangement
$\A$.

In the space $\C^n\times\C^N$
with coordinates $\xx,\qq$ consider the arrangement of hyperplanes
defined by equations
\bea
&&
q_i-q_j=0,\qquad 1\leq i<j\leq N;
\\
&&
q_i=0,\qquad i=1,\dots,N .
\eea
Denote by $\Delta$ the complement to the union of hyperplanes of this arrangement.

Consider the projection $\pi : \C^{\frac{n(n-1)}2}\times\C^n\times\C^N
\to \C^n\times\C^N$. For every $(\xx;\qq)$ the arrangement $ \A$
induces an arrangement in the fiber of $\pi$ over $(\xx;\qq)$.
Denote by $ U(\xx;\qq)$ the complement to that arrangement in the fiber.

Consider the master function $\Phi_\bla(\ss;\xx; \qq)$ as a multivalued function on $U$.
Let $\kappa\in\C^\times$. The function $\Phi_\bla(\ss;\xx; \qq)^{1/\kappa}$
defines a rank one local system ${\mc L}_\kappa$ on $ U$ whose horizontal sections
over open subsets of $ U$ are univalued branches of $\Phi_\bla(\ss;\xx; \qq)^{1/\kappa}$
multiplied by complex numbers. The vector bundle
\bean
\label{GM hat}
\sqcup_{(\xx,\qq) \in \Delta}\,H_{\frac{n(n-1)}2}(U(\xx;\qq), {\mc L}_\kappa\vert_{U(\xx;\qq)})
\to \Delta
\eean
has the canonical flat Gauss-Manin connection.
Let
\bea
\psi(\xx,\qq,\kappa)
\in H_{\frac{n(n-1)}2}( U(\xx,\qq), {\mc L}_\kappa\vert_{U(\xx,\qq)})
\eea
be a  flat section of the Gauss-Manin connection.
In what follows we will consider the multidimensional hypergeometric integrals
\bea
{\mc J}_{\psi,I}(\xx;\qq;\kappa) = \int_{\psi(\xx,\qq,\kappa)}
\Phi_\bla(\ss;\xx; \qq)^{1/\kappa} \omega_I(\ss;\qq)
\wedge_{a=1}^{n-1}\wedge_{i=1}^{n-a} d s^{(a)}_i
\eea
for $I\in\Il$.

\begin{thm} [\cite{MV}]
\label{MV integral}
For every  flat section $\psi(\xx,\qq,\kappa)$ of the Gauss-Manin connection on the bundle in \Ref{GM hat},
the function
\bean
\label{main formula hat}
F_\psi(\xx;\qq;\kappa) = \sum_{I\in\Il} {\mc J}_{\psi,I}(\xx;\qq;\kappa)\, w_I
\eean
is a flat section of the trigonometric KZ connection \Ref{kz} on $\pi_{W_\bla}$ and of the rational dynamical
difference connection \Ref{rdcw} on $\pi_{W_\bla}$.
\end{thm}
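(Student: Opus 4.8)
The plan is to prove the two assertions separately, in both cases viewing the integral $F_\psi$ as a pairing between the twisted de Rham class of the integrand $\Phi_\bla^{1/\kappa}\,\omega_I\,d\ss$ and the flat homology section $\psi$, and reducing each required equation to the vanishing of an integral of a form that is either exact in the fiber variables $\ss$ (for the differential equations) or related to the original integrand by a cycle deformation (for the difference equations). Throughout I would differentiate and shift under the integral sign, which is justified precisely because $\psi$ is flat for the Gauss-Manin connection on the bundle \Ref{GM hat}.

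First, for the trigonometric KZ connection \Ref{kz}, I would show $\nabla_{\xx,\qq,\kp,i}^{W_\bla}F_\psi=0$ for $i=1\lc N$. Since $\kappa q_i\,\der/\der q_i$ may be moved inside the integral, where it acts only on the integrand, and since $\kappa q_i\,\der_{q_i}\Phi_\bla^{1/\kappa}=(q_i\,\der_{q_i}\log\Phi_\bla)\,\Phi_\bla^{1/\kappa}$, the KZ equation reduces to a collection of pointwise identities among the rational function $q_i\,\der_{q_i}\log\Phi_\bla$, the weight functions $\omega_I$, and the matrix entries $(X^{W_\bla}_i)_{IJ}$ of the operator $X^{W_\bla}_i$ in the basis $(w_I)_{I\in\Il}$. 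Concretely, the vector
\[
\sum_{I\in\Il}\Bigl(\kappa q_i\tfrac{\der}{\der q_i}\,\mc J_{\psi,I}-\sum_{J\in\Il}(X^{W_\bla}_i)_{IJ}\,\mc J_{\psi,J}\Bigr)w_I
\]
must be shown to vanish componentwise, and the content is that for each $I$ the corresponding integrand equals a total $d\ss$-differential of $\Phi_\bla^{1/\kappa}$ times an explicit $(\tfrac{n(n-1)}2-1)$-form; the integral of such an exact form over the cycle $\psi\in H_{\frac{n(n-1)}2}(U(\xx;\qq),\mc L_\kappa\vert_{U(\xx;\qq)})$ is zero. This is the Schechtman-Varchenko mechanism, and I would invoke the arrangement-cohomology identities of \cite{SV} for the weight functions $\omega_I$ to produce the required exact primitives.

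For the rational dynamical difference connection \Ref{rdcw}, I would analyze the effect of the shift $x_i\mapsto x_i+\kappa$ on each $\mc J_{\psi,I}$. Under this shift $\Phi_\bla^{1/\kappa}$ acquires an explicit single-valued multiplier coming from the exponents $x_{a+1}-x_a+1$ on the $s^{(a)}_j$ (which change by $\pm1$ for $a=i-1,i$) and from the factor $\prod_i q_i^{\la_i(x_1-1+\la_i/2)}$ (contributing $\prod_i q_i^{\la_i}$ when $i=1$), while flatness of $\psi$ identifies the cycle at $x_i+\kappa$ with a transport of the original cycle. The key step is to show that, after this identification, $K^{W_\bla}_i(\xx;\qq;\kappa)$ applied to the vector of shifted integrals returns the vector $(\mc J_{\psi,I})_{I\in\Il}$; here the series $B_{a,b}(t)$ arise exactly as the resonance/transport coefficients governing how the $\omega_I$ recombine under the shift, which is the combinatorial heart of the matching and the new ingredient of \cite{MV} beyond the differential case.

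The main obstacle is establishing these two families of combinatorial identities for the $\glnn$ weight functions $\omega_I$: the exactness up to total $d\ss$-differentials needed for the differential equations, and the $B_{a,b}$-transport law needed for the difference equations. Both are delicate and are precisely the technical output of the theory of hypergeometric solutions of the trigonometric \KZ/ and rational dynamical difference equations developed in \cite{MV} (building on \cite{SV}). Rather than re-deriving the full arrangement calculus, I would import those identities and assemble them into the two required flatness statements as indicated above, thereby obtaining that $F_\psi$ in \Ref{main formula hat} is simultaneously a flat section of \Ref{kz} and of \Ref{rdcw}.
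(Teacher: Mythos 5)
The paper offers no proof of this theorem: it is quoted verbatim from \cite{MV}, and your proposal likewise defers the two technical ingredients (the Schechtman--Varchenko exactness identities for the differential equations and the $B_{a,b}$-transport identities for the difference equations) to \cite{MV} and \cite{SV}, so the approach is essentially the same. Your surrounding outline of the twisted de Rham/Gauss--Manin mechanism is an accurate high-level account of how those cited results are established, but it adds no independent argument beyond the citation.
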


\begin{cor}
\label{cor WW V}
The $V_\bla$-valued function
\bea
F^V_\psi(\xx;\qq;\kp)= \prod_{1\leq i<j\leq n} \frac{\Gamma((x_i-x_j-1)/\kp)}{\Gamma((x_i-x_j+1)/\kp)} \sum_{I\in\Il} {\mc J}_{\psi,I}(\xx;\qq;\kappa) \,v_I
\eea
is a flat section of the trigonometric dynamical connection \Ref{t nabla} on $\pi_{V_\bla}$ and of the rational \qKZ/
difference connection \Ref{bar K_i} on $\pi_{V_\bla}$.

\end{cor}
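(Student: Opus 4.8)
The plan is to obtain this corollary with no new computation, simply by composing two results already in hand: Theorem \ref{MV integral}, which realizes the hypergeometric integrals as flat sections on $\pi_{W_\bla}$, and Corollary \ref{cor kz dyn }, which transports such flat sections to $\pi_{V_\bla}$. The only work is to check that the output of the first result is exactly the input required by the second.

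First I would apply Theorem \ref{MV integral} to the given flat section $\psi(\xx,\qq,\kappa)$ of the Gauss-Manin connection on the bundle \Ref{GM hat}. It gives that the $W_\bla$-valued function
\be
F_\psi(\xx;\qq;\kappa) = \sum_{I\in\Il} {\mc J}_{\psi,I}(\xx;\qq;\kappa)\, w_I
\ee
is at once a flat section of the trigonometric KZ connection \Ref{kz} and of the rational dynamical difference connection \Ref{rdcw} on $\pi_{W_\bla}$. Unwinding the meaning of \emph{flat section}, this says precisely that, setting $F^W_I := {\mc J}_{\psi,I}$, the function $F^W := F_\psi$ obeys
\be
\nabla_{\xx,\qq,\kp,i}^{W_\bla}F^W = 0,\quad i=1\lc N, \qquad \Kh^{W_\bla}_{\xx,\qq,\kp,j}F^W = F^W,\quad j=1\lc n,
\ee
which is exactly the hypothesis of Corollary \ref{cor kz dyn }.

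Then I would feed this into Corollary \ref{cor kz dyn }. Its conclusion is that the $V_\bla$-valued function
\be
\prod_{1\leq i<j\leq n} \frac{\Gamma((x_i-x_j-1)/\kp)}{\Gamma((x_i-x_j+1)/\kp)}\, \sum_{I\in\Il} F^W_I(\xx;\qq)\, v_I
\ee
is a flat section of the trigonometric dynamical connection \Ref{t nabla} and of the rational \qKZ/ difference connection \Ref{bar K_i} on $\pi_{V_\bla}$. Substituting back $F^W_I = {\mc J}_{\psi,I}(\xx;\qq;\kappa)$ turns this expression verbatim into $F^V_\psi(\xx;\qq;\kp)$ as written in the statement, and the corollary follows.

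The only thing demanding attention, and the nearest thing to an obstacle, is notational consistency rather than mathematics: one must confirm that the parameter $\kappa$ appearing in $\psi$ and in $\Phi_\bla^{1/\kappa}$ is identified with the connection parameter $\kp$ of Corollary \ref{cor kz dyn }, that the differential index range $i=1\lc N$ and the difference index range $j=1\lc n$ are matched to the correct operators, and that the Gamma-factor prefactor written here is literally the one in that corollary. Once these identifications are recorded, the proof is a one-line composition of the two cited statements.
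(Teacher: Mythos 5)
Your proposal is correct and is exactly the paper's argument: the paper's proof of Corollary \ref{cor WW V} consists of the single line that it follows from Corollary \ref{cor kz dyn }, with Theorem \ref{MV integral} supplying the hypothesis just as you describe. The only difference is that you spell out the composition and the notational checks explicitly, which the paper leaves implicit.
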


\begin{proof}
The corollary follows from Corollary \ref{cor kz dyn }.
\end{proof}

\begin{cor}
\label{cor V C^N}
The $V_\bla$-valued function
\bea
F_\psi(\zz;\qq;h;\kp) &=& \prod_{i=1}^N q_i^{-\frac{h\la_i}{2\kp}}
\prod_{1\le i<j\le N}\!(1-\kk_j/\kk_i)
^{\<-\frac{h\>\min(\la_i,\la_j)}{\<\kp}} \times
\\
&\times &
\prod_{1\leq i<j\leq n} \frac{\Gamma((z_i-z_j+h)/\kp)}{\Gamma((z_i-z_j-h)/\kp)} \sum_{I\in\Il} {\mc J}_{\psi,I}(-z_1/h,\dots,-z_n/h;\qq;-\kappa/h) \,v_I
\eea
is a flat section of the trigonometric dynamical connection $\pho(\nabla_{\bla,\qq,\kp})$ on $\pi_{V_\bla}$ and of the
difference connection on $\pi_{V_\bla}$ defined by the operators $\Kh_{\zz,\qq,\kp,1}\lc\Kh_{\zz,\qq,\kp,n}$ in \Ref{K_i}.

\end{cor}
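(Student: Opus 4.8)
The plan is to derive $F_\psi$ directly from the flat sections already produced in Corollary \ref{cor WW V} by running them through the transfer Lemma \ref{ttt}, after first shifting the value of the connection parameter. The entire argument is a composition of two results in the excerpt; the only genuine computation is matching the gamma-function prefactors, which I postpone to the last step.

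First I would invoke Corollary \ref{cor WW V} with the parameter $\kp$ replaced throughout by $-\kp/h$. This is legitimate because $\kp,h\in\C^\times$ force $-\kp/h\in\C^\times$, and Theorem \ref{MV integral} holds for every nonzero value of the parameter. Setting
\be
\tilde F(\xx;\qq)\,=\,\prod_{1\le i<j\le n}\frac{\Gamma\bigl(-h(x_i-x_j-1)/\kp\bigr)}{\Gamma\bigl(-h(x_i-x_j+1)/\kp\bigr)}\,\sum_{I\in\Il}{\mc J}_{\psi,I}(\xx;\qq;-\kp/h)\,v_I,
\ee
Corollary \ref{cor WW V} guarantees that $\tilde F$ is simultaneously a flat section of the trigonometric dynamical connection $\nabla^{V_\bla}_{\xx,\qq,-\kp/h}$ on $\pi_{V_\bla}$ and of the rational \qKZ/ difference connection defined by $\Kh^{V_\bla}_{\xx,\qq,-\kp/h,1}\lc\Kh^{V_\bla}_{\xx,\qq,-\kp/h,n}$. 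These are exactly the two hypotheses of Lemma \ref{ttt}(i) and (ii), respectively.

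Next I would apply Lemma \ref{ttt}. Part (i) converts $\tilde F$ into a flat section of $\pho(\nabla_{\bla,\qq,\kp})$ by prepending the scalars $\prod_{i}q_i^{-h\la_i/(2\kp)}\prod_{i<j}(1-\kk_j/\kk_i)^{-h\min(\la_i,\la_j)/\kp}$ and substituting $x_a=-z_a/h$; part (ii) shows the same resulting function is flat for the difference connection $\Kh_{\zz,\qq,\kp,1}\lc\Kh_{\zz,\qq,\kp,n}$ of \Ref{K_i}. Since both parts of the lemma act on one and the same function, I obtain a single $V_\bla$-valued function that is flat for both connections asserted in the corollary.

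The final step, and the only place where anything must be checked, is to verify that this output equals the function $F_\psi$ displayed in the statement. The scalar prefactors agree verbatim, so I only need to evaluate the gamma product inside $\tilde F$ at $x_a=-z_a/h$. There $x_i-x_j=(z_j-z_i)/h$, so the arguments $-h(x_i-x_j\mp1)/\kp$ collapse to $(z_i-z_j\pm h)/\kp$, turning the product into $\prod_{1\le i<j\le n}\Gamma\bigl((z_i-z_j+h)/\kp\bigr)/\Gamma\bigl((z_i-z_j-h)/\kp\bigr)$ and sending each ${\mc J}_{\psi,I}(\xx;\qq;-\kp/h)$ to ${\mc J}_{\psi,I}(-z_1/h,\dots,-z_n/h;\qq;-\kp/h)$. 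This reproduces $F_\psi(\zz;\qq;h;\kp)$ exactly. I expect no real obstacle: the argument is a formal chaining of Corollary \ref{cor WW V} and Lemma \ref{ttt}, and the sign and scaling bookkeeping in the gamma factors is the only thing requiring care.
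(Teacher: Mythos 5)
Your proposal is correct and follows the same route as the paper: the paper's proof is the one-line statement that the corollary follows from Lemma \ref{ttt}, applied implicitly to the flat section supplied by Corollary \ref{cor WW V} with parameter $-\kp/h$, which is exactly the chain you spell out. Your bookkeeping of the substitution $x_a=-z_a/h$ in the gamma factors, giving $\Gamma((z_i-z_j+h)/\kp)/\Gamma((z_i-z_j-h)/\kp)$, is the right (and only nontrivial) verification.
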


\begin{proof}
The corollary follows from Lemma \ref{ttt}.
\end{proof}

\begin{cor}
\label{cor main}
For any $\psi$ as in Section \ref{HINT},
\bean
\label{stf}
&&
\\
\notag
&&
\St_{\on{id}}(F_\psi(\zz;\tilde q_1^{-1},\dots,\tilde q_N^{-1};h;-\kp)) =
\\
&&
\phantom{aaaa}
=\ \prod_{i=1}^N \tilde q_i^{-\frac{h\la_i}{2\kp}}
\prod_{1\le i<j\le N}\!(1-\tilde q_i/\tilde q_j)
^{\<\frac{h\>\min(\la_i,\la_j)}{\<\kp}} \prod_{1\leq i<j\leq n} \frac{\Gamma((z_j-z_i-h)/\kp)}{\Gamma((z_j-z_i+h)/\kp)}
\notag
\\
\notag
&&
\phantom{aaaaaaaa}
\times
\sum_{I\in\Il} {\mc J}_{\psi,I}(-z_1/h,\dots,-z_n/h;\tilde q_1^{-1},\dots,\tilde q_N^{-1};\kappa/h) \,\frac{[W_{\on{id},I}(\bs\Theta;\zz;h)]}{c_\bla(\bs\Theta)}
\eean
is a flat section of the quantum connection $\nabla_{\on{quant},\bla,\tilde\qq,\kp}$ on $H^*_T(T^*\F_\bla)$ and of the
difference connection on $H^*_T(T^*\F_\bla)$ defined by the operators
$\St_{\on{id}}\circ \Kh_{\zz,\tilde q^{-1}_1,\dots,\tilde q^{-1}_N,-\kp,1}
 \circ \nu,\dots,$ $\St_{\on{id}}\circ\Kh_{\zz,\tilde q^{-1}_1,\dots,\tilde q^{-1}_N,-\kp,n} \circ\nu$.
\end{cor}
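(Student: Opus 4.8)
The plan is to read the right-hand side of \Ref{stf} as the image under $\St_{\on{id}}$ of the flat section already produced in Corollary \ref{cor V C^N}, after specializing its parameters; all of the analytic work is packaged into that corollary and into Lemma \ref{V quant}, so the argument reduces to one parameter substitution followed by a bookkeeping of the scalar prefactors. First I would take the $V_\bla$-valued function $F_\psi(\zz;\qq;h;\kp)$ of Corollary \ref{cor V C^N}, which is simultaneously a flat section of the trigonometric dynamical connection $\pho(\nabla_{\bla,\qq,\kp})$ and of the \qKZ/ difference connection defined by $\Kh_{\zz,\qq,\kp,1}\lc\Kh_{\zz,\qq,\kp,n}$.

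The hypothesis of Lemma \ref{V quant} instead requires a flat section of $\pho(\nabla_{\bla,\tilde q_1^{-1},\dots,\tilde q_N^{-1},-\kp})$ and of the difference connection with operators $\Kh_{\zz,\tilde q_1^{-1},\dots,\tilde q_N^{-1},-\kp,i}$, so the key step is the specialization $\qq\mapsto(\tilde q_1^{-1},\dots,\tilde q_N^{-1})$, $\kp\mapsto-\kp$. In the difference connection $\qq$ and $\kp$ enter only as parameters, so flatness of $F_\psi(\zz;\tilde q_1^{-1},\dots,\tilde q_N^{-1};h;-\kp)$ for $\Kh_{\zz,\tilde q_1^{-1},\dots,\tilde q_N^{-1},-\kp,i}$ is immediate. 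For the differential connection $\qq$ is the base coordinate, so here I would use that $X^q_{\bla,i}$ in \Ref{Xkm} is independent of $\kp$ and that $q_i\der_{q_i}=-\tilde q_i\der_{\tilde q_i}$ under $q_i=\tilde q_i^{-1}$; the resulting change of variables carries the equation $\pho(\nabla_{\bla,\qq,-\kp,i})F_\psi(\zz;\qq;h;-\kp)=0$ exactly into $\pho(\nabla_{\bla,\tilde q^{-1},-\kp,i})\,F_\psi(\zz;\tilde q^{-1};h;-\kp)=0$, which is the change of variables underlying the identification \Ref{quanT}.

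With both flatness statements in hand I would invoke Lemma \ref{V quant} directly: it asserts that $\St_{\on{id}}$ sends a flat section of $\pho(\nabla_{\bla,\tilde q^{-1},-\kp})$ to a flat section of the quantum connection $\nabla_{\on{quant},\bla,\tilde\qq,\kp}$, and a flat section of the corresponding difference connection to one of the shift-operator connection $\St_{\on{id}}\circ\Kh_{\zz,\tilde q^{-1},-\kp,i}\circ\nu$. This gives both flatness claims of the corollary at once, the genuinely nontrivial inputs --- the identification $D_i*_{\tilde\qq}=\rho(X^q_{\bla,i})$ of Corollary \ref{cor qm = dh} and the fact that $\St_{\on{id}}$ is a $\ty$-module map (Theorem \ref{thm stab yang}) --- having already been absorbed into Lemma \ref{V quant}.

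It then remains only to check that the explicit right-hand side of \Ref{stf} is literally $\St_{\on{id}}(F_\psi(\zz;\tilde q^{-1};h;-\kp))$. Writing $F_\psi(\zz;\tilde q^{-1};h;-\kp)=\sum_{I\in\Il}F_I\,v_I$ and using $\St_{\on{id}}(v_I)=[W_{\on{id},I}(\bs\Theta;\zz;h)]/c_\bla(\bs\Theta)$, the scalar prefactors pull outside the sum, and I would verify term by term that under $\qq\mapsto\tilde q^{-1}$, $\kp\mapsto-\kp$ the factor $q_i^{-h\la_i/2\kp}$ becomes $\tilde q_i^{-h\la_i/2\kp}$, the factor $(1-\kk_j/\kk_i)^{-h\min(\la_i,\la_j)/\kp}$ becomes $(1-\tilde q_i/\tilde q_j)^{h\min(\la_i,\la_j)/\kp}$ (recalling $\kk=q$, so $q_j/q_i\mapsto\tilde q_i/\tilde q_j$ and the exponent changes sign with $\kp$), the Gamma quotient becomes $\Gamma((z_j-z_i-h)/\kp)/\Gamma((z_j-z_i+h)/\kp)$, and the last argument of ${\mc J}_{\psi,I}$ turns from $-\kappa/h$ into $\kappa/h$. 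Matching these against \Ref{stf} completes the identification. The one place that truly demands care --- hence the main, if modest, obstacle --- is precisely this simultaneous tracking of the inversion $\qq\mapsto\tilde q^{-1}$, the sign flip $\kp\mapsto-\kp$, and the convention $\kk=q$, where a single misplaced inversion would break the match; every structural step beyond it is already supplied by the results cited above.
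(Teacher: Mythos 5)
Your proposal is correct and follows exactly the paper's route: the paper's entire proof is ``The corollary follows from Lemma \ref{V quant},'' with the flat section supplied by Corollary \ref{cor V C^N} after the substitution $\qq\mapsto(\tilde q_1^{-1},\dots,\tilde q_N^{-1})$, $\kp\mapsto-\kp$, precisely as you describe. Your explicit tracking of the prefactors under this substitution is accurate and merely makes visible the bookkeeping the paper leaves implicit.
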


\begin{proof}
The corollary follows from Lemma \ref{V quant}.
\end{proof}

\subsection{Example}
\label{sec exa}

Let $n=N=2$, $\bla=(1,1)$. Then $T^*\F_\bla$ is the cotangent bundle of projective line and
\bea
H^*_T(T^*\F_\bla) = \C[\ga_{1,1},\ga_{2,1}]\otimes\C[z_1,z_2]\otimes \C[h]/\langle (u-\ga_{1,1})(u-\ga_{2,1})=(u-z_1)(u-z_2) \rangle .
\eea
We have
\bea
&&
\St_{\on{id}}(F_\psi(\zz;\tilde q_1^{-1},\tilde q_2^{-1};h;-\kp))
=
\tilde q_1^{(z_1+h)/\kp}\tilde q_2^{z_1/\kp} (\tilde q_1^{-1}-\tilde q_2^{-1})^{2h/\kp}
\frac{\Gamma((z_2-z_1-h)/\kp)}{\Gamma((z_2-z_1+h)/\kp)}\times
\\
&&
\phantom{aaa}
\times
\int_\psi s^{(z_1-z_2+h)/\kp}(s-\tilde q_1^{-1})^{-h/\kp}(s-\tilde q_2^{-1})^{-h/\kp}\big[\frac{\ga_{1,1}-z_1-h}{s-\tilde q_1^{-1}} +\frac{\ga_{1,1}-z_2}{s-\tilde q_2^{-1}} \big]\,ds .
\eea
The master function is
\bea
\Phi(s;\zz;\qq;h) = s^{(z_1-z_2+h)/\kp}(s-\tilde q_1^{-1})^{-h/\kp}(s-\tilde q_2^{-1})^{-h/\kp}.
\eea
The critical point equation is
\bea
\frac{z_1-z_2+h}s - \frac h{s-\tilde q_1^{-1}} - \frac h{s-\tilde q_2^{-1}}=0
\eea
or
\bea
(z_2-z_1+h)s^2-(z_2-z_1)(\tilde q_1^{-1}+\tilde q_2^{-1})s+ \tilde q_1^{-1}\tilde q_2^{-1}(z_2-z_1-h)=0.
\eea
The discriminant of this equation is
\bean
\label{dc}
\Delta_\Phi(\zz;\tilde \qq;h)
&=&
(z_1-z_2)^2(\tilde q_1^{-1}-\tilde q_2^{-1})^2 + 4\tilde q_1^{-1}\tilde q_2^{-1}h^2 =
\\
\notag
&=&
\frac{(z_1-z_2)^2(\tilde q_1-\tilde q_2)^2 + 4\tilde q_1\tilde q_2h^2}{\tilde q_1^{2}\tilde q_2^{2}}\ .
\eean
On the other hand, the algebra of quantum multiplication $\mc H^q_\bla$ is the quotient
of $ \C[\ga_{1,1},\ga_{2,1},\zz;h]$ by the relations
\be
\gak_{1,1}+\gak_{2,1}\>=\,z_1+z_2\,,\kern3em
\gak_{1,1}\>\gak_{2,1}+\>
\frac{\tilde q_1}{\tilde q_1-\tilde q_2}\,h\>(\gak_{1,1}-\gak_{2,1}-h)\,=\,
z_1\<\>z_2\,,
\ee
see the example in \cite[Section 6]{GRTV} where again we replaced $h$ with $-h$ and $q_1,q_2$ with $\tilde q_1^{-1}, \tilde q_2^{-1}$,
respectively.
Set $\ga_{1,1}=t, \ga_{2,1}=z_1+z_2-t$. Then the relation is
\bea
(\tilde q_1-\tilde q_2) t^2 - (2\tilde q_1h+(\tilde q_1-\tilde q_2)(z_1+z_2))t
+(\tilde q_1-\tilde q_2)z_1z_2+\tilde q_1h(z_1+z_2+h)=0 .
\eea
The discriminant of this equation is
\bean
\label{dco}
\Delta_{\mc H^q_\bla}(\zz;\tilde \qq;h) =(z_1-z_2)^2(\tilde q_1-\tilde q_2)^2 + 4\tilde q_1\tilde q_2h^2.
\eean
By comparing \Ref{dc} and \Ref{dco}, we conclude that the algebra of functions on the critical set of the master function is isomorphic to
the algebra of quantum multiplication.

\bigskip

\end{document}